\newtheorem{theorem}{Theorem}[section]
\newtheorem{lemma}[theorem]{Lemma}
\newtheorem{proposition}[theorem]{Proposition}
\newtheorem{corollary}[theorem]{Corollary}
\newtheorem{claim}[theorem]{Claim}
\newtheorem{subclaim}[theorem]{Subclaim}
\newtheorem{fact}[theorem]{Fact}
\newenvironment{definition}[1][Definition]{\begin{trivlist}
\item[\hskip \labelsep {\bfseries #1}]}{\end{trivlist}}
\renewcommand{\qedsymbol}{$\dashv$}
\begin{document}
\title{Squares and Covering Matrices}
\author{Chris Lambie-Hanson}
\address{Department of Mathematical Sciences, Carnegie Mellon University \\ Pittsburgh, PA 15213}
\email{clambieh@andrew.cmu.edu}
\date{\today}
\maketitle
\begin{abstract}
 Viale introduced covering matrices in his proof that SCH follows from PFA. In the course of the proof and subsequent work with Sharon, he isolated two reflection principles, CP and S, which, under certain circumstances, are satisfied by all covering matrices of a certain shape. Using square sequences, we construct covering matrices for which CP and S fail. This leads naturally to an investigation of square principles intermediate between $\square_{\kappa}$ and $\square(\kappa^+)$ for a regular cardinal $\kappa$. We provide a detailed picture of the implications between these square principles.
\end{abstract}

\section{Introduction}

There is a fundamental and well-studied tension in set theory between large cardinals and reflection phenomena on the one hand and combinatorial principles (various square principles, in particular) witnessing incompactness (see, for example, \cite{cfm}) on the other. Reflection and large cardinals place limits on the type of combinatorial structures which can exist, and vice versa. 

Covering matrices were introduced by Viale in his proof that the Singular Cardinals Hypothesis follows from the Proper Forcing Axiom \cite{viale}. Here and in later work with Sharon \cite{sharonviale}, Viale also isolated two natural properties, CP($\mathcal{D}$) and S($\mathcal{D}$), which can hold for a given covering matrix $\mathcal{D}$. The statement that CP($\mathcal{D}$) (or S($\mathcal{D}$)) holds for every covering matrix $\mathcal{D}$ of a certain type can be seen as a reflection statement and is thus at odds with the aforementioned incompactness phenomena.

We start this paper by constructing various covering matrices for which CP($\mathcal{D}$) and S($\mathcal{D}$) fail and investigating the relationship between the failure of CP($\mathcal{D}$) and S($\mathcal{D}$) and the existence of square sequences. This leads naturally to the definition of certain square principles which, for a regular, uncountable cardinal $\kappa$, are intermediate between $\square_\kappa$ and $\square(\kappa^+)$. We conclude by obtaining a detailed picture of the implications between these square principles.

Our notation is for the most part standard. Unless otherwise specified, the reference for all notation and definitions is \cite{jech}. If $A$ is a set and $\theta$ is a cardinal, then $[A]^\theta$ is the collection of subsets of $A$ of size $\theta$. If $A$ is a set of ordinals and $\alpha < \sup(A)$ is an ordinal of uncountable cofinality, we say $A$ {\em reflects} at $\alpha$ if $A\cap \alpha$ is stationary in $\alpha$. If $A$ is a set of ordinals, then $A'$ denotes the set of limit ordinals of $A$, i.e. the set of $\alpha$ such that $A\cap \alpha$ is unbounded in $\alpha$, and $\mathrm{otp}(A)$ denotes the order type of $A$. If $\phi: A\rightarrow B$ is a (partial) function and $X\subseteq A$, then $\phi[X]$ is the image of $X$ under $\phi$, and $\phi \restriction X$ is the restriction of $\phi$ to $\mathrm{dom}(\phi)\cap X$. If $\lambda$ is a cardinal and $\mu<\lambda$ is a regular cardinal, then $S^{\lambda}_{\mu} = \{\alpha < \lambda \mid \mathrm{cf}(\alpha)=\mu \}$. $S^{\lambda}_{<\mu}$ is defined in the obvious way. If $s$ is a sequence, then $|s|$ denotes the length of $s$, and, if $t$ is also a sequence, $s^\frown t$ denotes the concatenation of the two.

\section{Covering Matrices}

\begin{definition}
Let $\theta<\lambda$ be regular cardinals. $\mathcal{D}=\{ D(i,\beta) \mid i<\theta, \beta<\lambda \}$ is a {\em $\theta$-covering matrix for $\lambda$} if:
\begin{enumerate}
\item{For all $\beta<\lambda$, $\beta = \bigcup_{i<\theta}D(i,\beta)$.}
\item{For all $\beta<\lambda$ and all $i<j<\theta$, $D(i,\beta) \subseteq D(j, \beta)$.}
\item{For all $\beta<\gamma<\lambda$ and all $i<\theta$, there is $j<\theta$ such that $D(i,\beta)\subseteq D(j,\gamma)$.}
\end{enumerate}

$\beta_{\mathcal{D}}$ is the least $\beta$ such that for all $\gamma<\lambda$ and all $i<\theta$, $\mathrm{otp}(D(i,\gamma))<\beta$. $\mathcal{D}$ is {\em normal} if $\beta_{\mathcal{D}}<\lambda$.

$\mathcal{D}$ is {\em transitive} if, for all $\alpha<\beta<\lambda$ and all $i<\theta$, if $\alpha \in D(i,\beta)$, then $D(i,\alpha)\subseteq D(i,\beta)$.

$\mathcal{D}$ is {\em uniform} if for all $\beta<\lambda$ there is $i<\theta$ such that $D(j,\beta)$ contains a club in $\beta$ for all $j\geq i$. (Note that this is equivalent to the statement that there is $i<\theta$ such that $D(i,\beta)$ contains a club in $\beta$.)

$\mathcal{D}$ is {\em closed} if for all $\beta<\lambda$, all $i<\theta$, and all $X\in [D(i,\beta)]^{\leq \theta}$, $\sup X \in D(i,\beta)$.
\end{definition}

The first part of this paper will be concerned with constructing covering matrices for which the following two reflection properties fail.

\begin{definition}
Let $\theta < \lambda$ be regular cardinals, and let $\mathcal{D}$ be a $\theta$-covering matrix for $\lambda$. 
\begin{enumerate}
\item{$\mathrm{CP}(\mathcal{D}$) holds if there is an unbounded $T\subseteq \lambda$ such that for every $X \in [T]^\theta$, there are $i<\theta$ and $\beta < \lambda$ such that $X\subseteq D(i,\beta)$ (in this case, we say that $\mathcal{D}$ {\em covers} $[T]^\theta$).}
\item{$\mathrm{S}(\mathcal{D}$) holds if there is a stationary $S \subseteq \lambda$ such that for every family $\{S_j \mid j<\theta\}$ of stationary subsets of $S$, there are $i<\theta$ and $\beta < \lambda$ such that, for every $j<\theta$, $S_j \cap D(i, \beta) \not= \emptyset$.}
\end{enumerate}
\end{definition}

\begin{definition}
Let $\theta < \lambda$ be regular cardinals. $\mathrm{R}(\lambda, \theta$) is the statement that there is a stationary $S\subseteq \lambda$ such that for every family $\{S_j \mid j<\theta \}$ of stationary subsets of $S$, there is $\alpha < \lambda$ of uncountable cofinality such that, for all $j<\theta$, $S_j$ reflects at $\alpha$.
\end{definition}

If $\mathcal{D}$ is a nice enough covering matrix, then $\mathrm{CP}(\mathcal{D}$) and $\mathrm{S}(\mathcal{D}$) are equivalent and $\mathrm{R}(\lambda, \theta$) implies both. The following is proved in \cite{sharonviale}:

\begin{lemma}
Let $\theta < \lambda$ be regular cardinals, and let $\mathcal{D}$ be a $\theta$-covering matrix for $\lambda$.
\begin{enumerate}
 \item{If $\mathcal{D}$ is transitive, then $\mathrm{S}(\mathcal{D}$) implies $\mathrm{CP}(\mathcal{D}$).}
 \item{If $\mathcal{D}$ is closed, then $\mathrm{CP}(\mathcal{D}$) implies $\mathrm{S}(\mathcal{D}$).}
 \item{If $\mathcal{D}$ is uniform, then $\mathrm{R}(\lambda, \theta$) implies $\mathrm{S}(\mathcal{D}$).}
\end{enumerate}
\end{lemma}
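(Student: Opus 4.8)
The plan is to treat the three implications separately, saving the transitive direction (1) for last since it is the crux. For (3) I would simply keep the stationary set $S$ that witnesses $\mathrm{R}(\lambda,\theta)$ and check that it also witnesses $\mathrm{S}(\mathcal{D})$. Given a family $\{S_j \mid j<\theta\}$ of stationary subsets of $S$, applying $\mathrm{R}(\lambda,\theta)$ produces a single $\alpha<\lambda$ of uncountable cofinality at which every $S_j$ reflects, so each $S_j\cap\alpha$ is stationary in $\alpha$. Uniformity then hands me an $i<\theta$ for which $D(i,\alpha)$ contains a club $C$ of $\alpha$; since each stationary $S_j\cap\alpha$ meets $C$, taking $\beta=\alpha$ yields $S_j\cap D(i,\beta)\neq\emptyset$ for all $j$ simultaneously.

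For (2), let $T$ be an unbounded set witnessing $\mathrm{CP}(\mathcal{D})$. The limit points of $T$ form a club, so $S:=\{\alpha\in S^\lambda_\theta \mid \sup(T\cap\alpha)=\alpha\}$ is stationary, and every $\alpha\in S$ has cofinality $\theta$ with $T\cap\alpha$ cofinal in $\alpha$; I claim this $S$ witnesses $\mathrm{S}(\mathcal{D})$. Given stationary $\{S_j\mid j<\theta\}$ below $S$, I pick a point $\alpha_j\in S_j$ for each $j$ and a cofinal $X_j\subseteq T\cap\alpha_j$ of order type $\theta$, so that $X:=\bigcup_{j<\theta}X_j$ lies in $[T]^\theta$. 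By $\mathrm{CP}(\mathcal{D})$ there are $i<\theta$ and $\beta<\lambda$ with $X\subseteq D(i,\beta)$. Closedness is precisely what converts this covering into hitting: each $X_j\in[D(i,\beta)]^{\le\theta}$, so $\alpha_j=\sup X_j\in D(i,\beta)$, whence $\alpha_j\in S_j\cap D(i,\beta)$.

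For (1), suppose $\mathrm{S}(\mathcal{D})$ holds with witness $S$. The strategy is to manufacture, once and for all, a single column index $i^*$ and an unbounded set $T$ on which a single application of $\mathrm{S}(\mathcal{D})$ will cover an arbitrary $\theta$-sized subset. For each $x<\lambda$ the stationary tail $\{\alpha\in S\mid x<\alpha\}$ is the increasing union over $i<\theta$ of the sets $\{\alpha\in S\mid x\in D(i,\alpha)\}$; since the nonstationary ideal on $\lambda$ is $\lambda$-complete and $\theta<\mathrm{cf}(\lambda)=\lambda$, some term is stationary, and I let $r(x)$ be the least such $i$. Because $\lambda=\bigcup_{i<\theta}\{x\mid r(x)\le i\}$, regularity of $\lambda$ gives an $i^*<\theta$ for which $T:=\{x<\lambda\mid r(x)\le i^*\}$ is unbounded, and for every $x\in T$ the set $E_x:=\{\alpha\in S\mid x\in D(i^*,\alpha)\}$ is stationary. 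Given $X=\{x_j\mid j<\theta\}\in[T]^\theta$, I then feed the family $\{E_{x_j}\mid j<\theta\}$ into $\mathrm{S}(\mathcal{D})$ to obtain $i'<\theta$, $\beta<\lambda$, and points $\alpha_j\in E_{x_j}\cap D(i',\beta)$.

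The main obstacle is the apparent mismatch between the fixed column $i^*$ built into $T$ and the column $i'$ that $\mathrm{S}(\mathcal{D})$ returns: transitivity yields $D(i,\alpha_j)\subseteq D(i,\beta)$ only when $\alpha_j\in D(i,\beta)$, while the membership $x_j\in D(i^*,\alpha_j)$ is useful only if $i^*$ does not exceed the relevant column. I expect this to be resolved by monotonicity of the columns in the first coordinate: setting $i=\max(i',i^*)$, I still have $\alpha_j\in D(i',\beta)\subseteq D(i,\beta)$, so transitivity gives $D(i,\alpha_j)\subseteq D(i,\beta)$, and $x_j\in D(i^*,\alpha_j)\subseteq D(i,\alpha_j)$ then forces $x_j\in D(i,\beta)$ for every $j$, i.e. $X\subseteq D(i,\beta)$. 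Hence $T$ witnesses $\mathrm{CP}(\mathcal{D})$. The only points that I anticipate will require genuine care are the cofinality bookkeeping that makes $r$ and $i^*$ well defined and this enlargement-of-column step; everything else is routine.
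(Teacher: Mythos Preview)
The paper does not supply its own proof of this lemma; it simply cites \cite{sharonviale} and moves on. So there is nothing in the paper to compare your argument against.

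That said, your argument is correct in all three parts and is essentially the standard one. A few remarks: in (3) your use of uniformity at the simultaneous reflection point is exactly right. In (2) the bookkeeping is fine: $S=\{\alpha\in S^\lambda_\theta\mid\sup(T\cap\alpha)=\alpha\}$ is stationary as the intersection of a club with $S^\lambda_\theta$, each $X_j$ exists because $\mathrm{cf}(\alpha_j)=\theta$, and closedness applied to $X_j\in[D(i,\beta)]^{\le\theta}$ gives $\alpha_j\in D(i,\beta)$ as you say. In (1) the construction of $r(x)$ via $\lambda$-completeness of the nonstationary ideal, the extraction of $i^*$ and $T$ by regularity of $\lambda$, and the final enlargement $i=\max(i',i^*)$ combining column monotonicity with transitivity are all correct; the mismatch you flagged as the ``main obstacle'' is resolved exactly as you describe. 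No genuine gaps.
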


The following lemma is a key component of Viale's proof that SCH follows from PFA.

\begin{lemma}
 Let $\lambda > \aleph_2$ be a regular cardinal. PFA implies that $\mathrm{CP}(\mathcal{D}$) holds for every $\omega$-covering matrix $\mathcal{D}$ for $\lambda$.
\end{lemma}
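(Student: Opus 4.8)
The plan is to use PFA to generically add the witnessing unbounded set $T$, via a proper forcing built directly from the matrix $\mathcal{D}$ with countable elementary submodels as side conditions. Fix a large regular $\chi$ and work over $(H_\chi,\in,\mathcal{D})$. A condition in $\mathbb{P}=\mathbb{P}_{\mathcal{D}}$ will be a pair $p=(x_p,\mathcal{N}_p)$, where $x_p\in[\lambda]^{<\omega}$ is a finite approximation to $T$ and $\mathcal{N}_p$ is a finite $\in$-increasing chain of countable $N\prec H_\chi$ with $\mathcal{D}\in N$, subject to the coherence requirement that for each $N\in\mathcal{N}_p$ the part of $x_p$ lying in $N$ is already captured by a single cell, i.e. there are $i<\omega$ and $\beta\in N\cap\lambda$ with $x_p\cap N\subseteq D(i,\beta)$, while the elements of $x_p$ lying above the models are controlled by the values $\sup(N\cap\lambda)$. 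The ordering end-extends both coordinates. The one structural fact I would isolate first is that whenever $\mathrm{cf}(\delta)=\omega_1$, the increasing union $\delta=\bigcup_{i<\omega}D(i,\delta)$ cannot be achieved by a countable supremum, so some cell $D(i,\delta)$ is already cofinal in $\delta$; this is what lets the tops $\sup(N\cap\lambda)$ of the side conditions carry genuine covering data.

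Next I would verify the two combinatorial engines. For density, given $p$ and $\alpha<\lambda$ I extend $x_p$ to reach above $\alpha$ while restoring coherence, and here covering-matrix property (3) is essential: an old witnessing cell $D(i,\beta)$ committed to inside a model is absorbed into some $D(j,\gamma)$ at a higher $\gamma$, so extending upward never destroys the captures already assigned to the lower models. The heart of the argument is properness. Given a countable $M\prec H_{\chi^+}$ with $\mathbb{P},\mathcal{D},p\in M$, I would show that $q=(x_p,\mathcal{N}_p{}^\frown\langle M\cap H_\chi\rangle)$ is a condition and is $(M,\mathbb{P})$-generic: for a dense $E\in M$ and any $r\leq q$ with $r\in E$, elementarity produces a compatible $r'\in E\cap M$, and the coherence requirement together with property (3) guarantees that $r'$ and $r$ amalgamate over $M$. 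I expect this amalgamation --- showing that what is done below $M\cap\lambda$ inside $M$ stays compatible with the finitely many commitments $r$ makes above $M\cap\lambda$ --- to be the main obstacle, exactly as in Todorcevic-style side-condition arguments.

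Finally I would apply PFA, and the subtle point is that PFA only meets $\aleph_1$-many dense sets, whereas an unbounded $T\subseteq\lambda$ has size $\lambda>\aleph_2$, so a single application cannot build $T$ by brute force. The way around this is to arrange the coherence tightly enough that the generic filter yields covering data along a continuous chain cofinal in a point of cofinality $\omega_1$, and so that the upward coherence of property (3) forces these local covers to fit together: any countable $X\subseteq T$ is bounded below $\lambda$, lies inside the portion of $T$ committed to a single model, and its witnessing cell propagates upward to a single $D(i,\beta)$. Assembling the local covering data into one globally covered unbounded $T$ --- equivalently, securing unboundedness in $\lambda$ and global covering simultaneously from an object controlled by only $\aleph_1$ dense sets --- is the crux of the theorem, and it is here that the hypothesis $\lambda>\aleph_2$ enters, guaranteeing cofinally many candidate points of cofinality $\omega_1$ below $\lambda$ together with the room needed for the reflection that the countable side-condition models require. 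Modulo this gluing step, $\mathrm{CP}(\mathcal{D})$ follows with $T$ read off from the generic filter.
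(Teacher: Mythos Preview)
The paper does not prove this lemma; it is stated without proof as a result of Viale, so there is no in-paper argument to compare your proposal against directly.

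That said, your proposal has a genuine gap, and you have located it yourself. You observe that PFA meets only $\aleph_1$ dense sets while an unbounded $T\subseteq\lambda$ has size $\lambda>\aleph_2$, and you concede that ``assembling the local covering data into one globally covered unbounded $T$\ldots is the crux of the theorem.'' But you do not carry this step out. Your suggested mechanism --- obtaining covering data along a chain cofinal in a single point of cofinality $\omega_1$ --- produces at best a set bounded below that point, not an unbounded subset of $\lambda$; and the assertion that the side-condition models supply enough ``reflection'' to propagate this to a global unbounded $T$ is a hope, not an argument. A finitely-supported side-condition poset cannot enumerate an unbounded subset of $\lambda>\aleph_2$ via $\aleph_1$ dense sets, so the approach as described cannot close. (Your explanation of where $\lambda>\aleph_2$ enters is also off: $S^\lambda_{\omega_1}$ is already cofinal in any regular $\lambda>\omega_1$.)

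Viale's actual route avoids building $T$ pointwise. One clean version goes through the P-ideal dichotomy, a consequence of PFA: the collection $I_{\mathcal{D}}$ of countable $X\subseteq\lambda$ covered by some $D(i,\beta)$ is a P-ideal by clause~(3) of the covering-matrix definition. The ``orthogonal'' alternative of PID would write $\lambda$ as a countable union of sets each meeting every $D(i,\beta)$ finitely, which bounds the size of each piece and contradicts the regularity and size of $\lambda$; the remaining alternative hands you a covered set in one stroke. The moral is that the argument is indirect --- one rules out the failure of $\mathrm{CP}(\mathcal{D})$ structurally rather than approximating $T$ by finite pieces --- and this is exactly what your proposal is missing.
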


Let $\kappa$ be an uncountable regular cardinal. We first consider the question of what types of $\kappa$ covering matrices for $\kappa^+$ can exist. In particular, we will be interested in the existence of a covering matrix that is transitive, normal, and uniform. It turns out that one can always ask for any two of these three properties.

\begin{proposition}
There is a uniform, transitive $\kappa$-covering matrix for $\kappa^+$.
\end{proposition}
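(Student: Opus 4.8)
The plan is to build the matrix explicitly by recursion on $\beta<\kappa^+$ from a fixed system of clubs, and to verify the three covering axioms together with transitivity and uniformity by a single simultaneous induction. For each limit $\beta<\kappa^+$ I would fix a continuous, strictly increasing, cofinal sequence $\langle\gamma^\beta_\nu\mid\nu<\mathrm{cf}(\beta)\rangle$ with supremum $\beta$ and set $C_\beta=\{\gamma^\beta_\nu\mid\nu<\mathrm{cf}(\beta)\}$, a club in $\beta$. The rows are then defined by $D(i,0)=\emptyset$, $D(i,\gamma+1)=D(i,\gamma)\cup\{\gamma\}$, and, for limit $\beta$,
\[
D(i,\beta)=
\begin{cases}
\{\gamma^\beta_\nu\mid\nu\le i\}\cup\bigcup_{\nu\le i}D(i,\gamma^\beta_\nu), & \mathrm{cf}(\beta)<\kappa,\\
C_\beta\cup\bigcup_{\nu<\kappa}D(i,\gamma^\beta_\nu), & \mathrm{cf}(\beta)=\kappa,
\end{cases}
\]
where in the first clause $\nu$ ranges over ordinals below $\mathrm{cf}(\beta)$. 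Note that the successor clause is the instance of the limit recursion in which one "steps down" to the immediate predecessor; the real content is in the two limit clauses.

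I would carry the following invariants through the induction: that $\langle D(i,\beta)\mid i<\kappa\rangle$ is $\subseteq$-increasing with union $\beta$ (axioms (1),(2)); that axiom (3) holds for every pair $\alpha<\beta$ already constructed; and that transitivity and uniformity hold at $\beta$. The crucial structural feature, which makes the verification go through, is that membership in $D(i,\beta)$ occurs only in one of two ways: either as a club point $\gamma^\beta_\nu$ (with $\nu\le i$, resp. $\nu<\kappa$), or as an element of some $D(i,\gamma^\beta_\nu)$. Transitivity is then immediate from the inductive transitivity at the $\gamma^\beta_\nu$: if $\alpha=\gamma^\beta_\nu$ then $D(i,\alpha)=D(i,\gamma^\beta_\nu)$ is one of the sets unioned into $D(i,\beta)$, and if $\alpha\in D(i,\gamma^\beta_\nu)$ then $D(i,\alpha)\subseteq D(i,\gamma^\beta_\nu)\subseteq D(i,\beta)$ by the inductive hypothesis at $\gamma^\beta_\nu$. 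Axiom (3) is verified similarly: given $\alpha<\beta$, choose $\nu_0$ with $\alpha\le\gamma^\beta_{\nu_0}$; either $\alpha=\gamma^\beta_{\nu_0}$, in which case a large enough index $j\ge\max(i,\nu_0)$ absorbs $D(i,\gamma^\beta_{\nu_0})$ into $D(j,\beta)$, or $\alpha<\gamma^\beta_{\nu_0}$, in which case the inductive instance of (3) for the pair $\alpha<\gamma^\beta_{\nu_0}$ supplies a $j'$ with $D(i,\alpha)\subseteq D(j',\gamma^\beta_{\nu_0})\subseteq D(j',\beta)$.

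The main obstacle, and the reason for the split into two cases, is uniformity at points of cofinality $\kappa$. When $\mathrm{cf}(\beta)<\kappa$ the gated definition suffices: for $i\ge\mathrm{cf}(\beta)$ the set $\{\gamma^\beta_\nu\mid\nu\le i\}$ is all of $C_\beta$, hence a club contained in $D(i,\beta)$, so uniformity holds with threshold $\mathrm{cf}(\beta)$ (and at successors it is immediate, since the top point lies in $D(0,\beta)$). But when $\mathrm{cf}(\beta)=\kappa$ no index $i<\kappa$ can recapture the whole club by gating, so I am forced to abandon the gating and place all of $C_\beta$ into every row; this is exactly where normality is sacrificed, as these rows acquire order type at least $\kappa$, consistent with the expectation that one obtains only two of the three properties at once. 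The delicate point is then to confirm that swallowing the entire club does not destroy transitivity or coherence — but the case analysis above was written so as to apply verbatim to this clause, since there too every element of $D(i,\beta)$ enters either as a club point or through some $D(i,\gamma^\beta_\nu)$. I expect the only genuine care to be needed in bookkeeping the indices in axiom (3) and in checking that the union over all $i$ still recovers $\beta$ in the cofinality-$\kappa$ clause, which follows because the $\gamma^\beta_\nu$ are cofinal and $\bigcup_i D(i,\gamma^\beta_\nu)=\gamma^\beta_\nu$ inductively.
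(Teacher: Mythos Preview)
Your construction is correct, but it is far more elaborate than the situation requires. The paper's proof is a single line: set $D(i,\beta)=\beta$ for every $i<\kappa$ and $\beta<\kappa^+$. This trivially satisfies axioms (1)--(3), is transitive because $\alpha\in\beta$ implies $\alpha\subseteq\beta$, and is uniform because $\beta$ itself is club in $\beta$.

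Your recursive definition along fixed cofinal sequences, gated by the index $i$ below cofinality $\kappa$ and unrestricted at cofinality $\kappa$, does work; the transitivity and coherence checks you outline go through as written. What your construction buys over the trivial one is a nontrivial matrix whose rows are genuinely proper subsets of $\beta$ in many places, and indeed your machinery is close in spirit to what the paper uses in the \emph{subsequent} propositions (transitive+normal, uniform+normal), where the trivial matrix fails because it is not normal. But for the present statement, where normality is not demanded, the constant matrix $D(i,\beta)=\beta$ is the intended example, and your effort to avoid putting ``all of $\beta$'' into the rows at cofinality-$\kappa$ points---which, as you correctly note, you are in the end forced to abandon anyway---is work toward a goal the proposition does not ask for.
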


\begin{proof}
Simply let $D(i,\beta)=\beta$ for every $i<\kappa$ and $\beta<\kappa^+$.
\end{proof}

\begin{proposition}
There is a transitive, normal $\kappa$-covering matrix for $\kappa^+$.
\end{proposition}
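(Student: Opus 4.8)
The plan is to build the matrix from a single subadditive function produced by Todorcevic's method of minimal walks, so that coherence of the matrix becomes a triangle inequality. Fix a $C$-sequence $\langle C_\beta \mid \beta < \kappa^+ \rangle$ (so $C_{\beta+1} = \{\beta\}$ and $C_\beta$ is club in $\beta$ for limit $\beta$); since $|\beta| \le \kappa$ for every $\beta < \kappa^+$, I may arrange that $\mathrm{otp}(C_\beta) \le \kappa$ throughout. Walking along this $C$-sequence yields a function $\rho : [\kappa^+]^2 \rightarrow \kappa$, and I will use only two of its standard properties: (i) (subadditivity) for all $\xi < \alpha < \beta$, $\rho(\xi,\beta) \le \max(\rho(\xi,\alpha), \rho(\alpha,\beta))$; and (ii) (small fibers) for all $\beta < \kappa^+$ and all $\nu < \kappa$, $|\{\xi < \beta \mid \rho(\xi,\beta) \le \nu\}| < \kappa$. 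Given $\rho$, I define the matrix by $D(i,\beta) = \{\xi < \beta \mid \rho(\xi,\beta) < i\}$ for $i < \kappa$ and $\beta < \kappa^+$.

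Everything then reduces to short verifications. Condition (2) of the covering-matrix definition is immediate, since the defining threshold is monotone in $i$; and condition (1) holds because $\rho(\xi,\beta) < \kappa$ for every $\xi < \beta$, so each such $\xi$ lies in $D(\rho(\xi,\beta)+1, \beta)$, whence $\bigcup_{i<\kappa} D(i,\beta) = \beta$. For (3), fix $\beta < \gamma$ and $i < \kappa$; by (i) every $\xi \in D(i,\beta)$ satisfies $\rho(\xi,\gamma) \le \max(\rho(\xi,\beta), \rho(\beta,\gamma)) < \max(i, \rho(\beta,\gamma)+1)$, so $D(i,\beta) \subseteq D(j,\gamma)$ for $j = \max(i, \rho(\beta,\gamma)+1) < \kappa$. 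Normality follows from (ii): for fixed $i < \kappa$, $D(i,\beta) = \bigcup_{\nu < i} \{\xi < \beta \mid \rho(\xi,\beta) \le \nu\}$ is a union of fewer than $\kappa$ sets each of size less than $\kappa$, so by regularity of $\kappa$ we get $\mathrm{otp}(D(i,\beta)) \le |D(i,\beta)| < \kappa$, and hence $\beta_{\mathcal{D}} \le \kappa < \kappa^+$. Transitivity is one more instance of (i): if $\alpha \in D(i,\beta)$ and $\xi \in D(i,\alpha)$, then $\rho(\alpha,\beta) < i$ and $\rho(\xi,\alpha) < i$, so $\rho(\xi,\beta) \le \max(\rho(\xi,\alpha), \rho(\alpha,\beta)) < i$, giving $\xi \in D(i,\beta)$ and thus $D(i,\alpha) \subseteq D(i,\beta)$.

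With this reduction the only genuine content is the construction of $\rho$ and the verification of (i) and (ii), and that is where I expect the work to concentrate. Property (ii) is a fairly direct consequence of the bound $\mathrm{otp}(C_\beta) \le \kappa$ together with the recursive definition of $\rho$, whereas the subadditivity (i) is the delicate point; both hold in ZFC by the standard analysis of the lower and upper traces of minimal walks, so I would invoke that analysis rather than reprove it here. It is worth emphasizing that this construction makes no attempt at uniformity: the $D(i,\beta)$ are deliberately small (order type below $\kappa$), which is incompatible with any $D(i,\beta)$ containing a club in $\beta$ when $\mathrm{cf}(\beta) = \kappa$. Obtaining uniformity simultaneously with transitivity and normality is precisely what fails in ZFC and what the later square principles are introduced to measure.
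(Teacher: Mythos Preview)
Your argument is correct, but it takes a genuinely different route from the paper's. The paper gives an elementary, self-contained recursion: fixing surjections $\phi_\beta:\kappa\to\beta$, it sets $D(i,\beta)=\phi_\beta[i]\cup\bigcup_{\alpha\in\phi_\beta[i]}D(i,\alpha)$, and normality and transitivity follow by a short induction with no external machinery. You instead pull in Todorcevic's $\rho$ function and read off all the covering-matrix axioms from its subadditivity and small-fiber properties; this is exactly the mechanism the paper deploys \emph{later} (via $\rho_\theta$, Lemmas~\ref{rho} and~\ref{closed}) to build covering matrices from $\square(\lambda)$-sequences. The trade-off is clear: the paper's proof is lighter and needs nothing beyond the surjections, while yours imports a nontrivial piece of walks technology but in return gives a matrix with extra structure for free (closure, and in fact the stronger two-sided coherence $D(i,\alpha)=D(i,\beta)\cap\alpha$ whenever $\alpha\in D(i,\beta)$, via the second subadditivity inequality). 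Your closing remark that the construction cannot be uniform is also apt and anticipates the paper's subsequent discussion.
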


\begin{proof}
For each $\alpha<\kappa^+$, let $\phi_\alpha : \kappa \rightarrow \alpha$ be a surjection. For $i<\kappa$ and $\beta<\kappa^+$, recursively define \[D(i,\beta)=\phi_\beta [i] \cup \bigcup_{\alpha \in \phi_\beta[i]}D(i, \alpha).\] Let $\mathcal{D}=\{D(i,\beta) \mid i<\kappa, \beta<\kappa^+\}$.

It is clear that $\mathcal{D}$ is a covering matrix and, inductively, $|D(i, \beta) | < \kappa$ for every $i$ and $\beta$. Thus, $\beta_{\mathcal{D}} \leq \kappa$, so $\mathcal{D}$ is normal. For each $i<\kappa$, we show by induction on $\beta <\kappa^+$ that if $\alpha \in D(i,\beta)$, then $D(i,\alpha)\subseteq D(i,\beta)$. Indeed, if $\alpha \in \phi_\beta [i]$, then the conclusion holds by definition, while if $\alpha \in D(i,\gamma)$ for some $\gamma \in \phi_\beta [i]$, then by induction $D(i,\alpha) \subseteq D(i,\gamma) \subseteq D(i,\beta)$. Thus, $\mathcal{D}$ is transitive. 
\end{proof}

The following lemma on ordinal arithmetic will be quite useful in our construction of covering matrices.

\begin{lemma}
Let $\theta$ be a regular cardinal, $\mu<\theta$, and $m<\omega$. Suppose that for each $i<\mu$, $X_i$ is a set of ordinals such that $\mathrm{otp}(X_i)<\theta^m$. Let $X=\bigcup_{i<\mu}X_i$. Then $\mathrm{otp}(X)<\theta^m$.
\end{lemma}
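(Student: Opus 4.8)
The plan is to prove the lemma by induction on $m$, reading $\theta^m$ as ordinal exponentiation (equivalently, the $m$-fold ordinal product $\theta\cdot\theta\cdots\theta$), which is the natural reading since the quantities $\mathrm{otp}(X_i)$ are order types. The base case $m=0$ is trivial: $\theta^0=1$ forces every $X_i$ to be empty, so $X$ is empty and $\mathrm{otp}(X)=0<1$. The content lies in the successor step, so I would assume the statement for $m$ and suppose, toward a contradiction, that we are given sets $X_i$ ($i<\mu$) with each $\mathrm{otp}(X_i)<\theta^{m+1}$ but $\mathrm{otp}(X)\geq\theta^{m+1}=\theta^m\cdot\theta$.

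First I would pass to the initial segment $Y\subseteq X$ with $\mathrm{otp}(Y)=\theta^m\cdot\theta$ and cut it into consecutive blocks: enumerating $Y$ in increasing order, let $Y_\xi$ consist of the elements occupying positions in the interval $[\theta^m\cdot\xi,\,\theta^m\cdot(\xi+1))$, for $\xi<\theta$. Then $Y=\bigsqcup_{\xi<\theta}Y_\xi$, each block $Y_\xi$ has order type exactly $\theta^m$, and for $\xi<\xi'$ every element of $Y_\xi$ lies below every element of $Y_{\xi'}$.

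The key move is to apply the induction hypothesis to each block. Fix $\xi<\theta$. Since $Y_\xi=\bigcup_{i<\mu}(X_i\cap Y_\xi)$ is a union of $\mu<\theta$ sets and $\mathrm{otp}(Y_\xi)=\theta^m$ is not ${<}\,\theta^m$, the induction hypothesis forbids all the pieces from having order type $<\theta^m$; hence there is $i(\xi)<\mu$ with $\mathrm{otp}(X_{i(\xi)}\cap Y_\xi)=\theta^m$ (the value is $\leq\theta^m$ as a subset of $Y_\xi$ and $\geq\theta^m$ by the previous sentence). This defines a function $i:\theta\rightarrow\mu$; since $\theta$ is regular and $\mu<\theta$, some fiber $A=\{\xi<\theta\mid i(\xi)=i^*\}$ has size $\theta$ and is therefore cofinal in $\theta$ with $\mathrm{otp}(A)=\theta$.

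Finally I would collect these witnesses inside the single set $X_{i^*}$. The sets $X_{i^*}\cap Y_\xi$ for $\xi\in A$ are pairwise disjoint, each of order type $\theta^m$, and arranged in increasing order following the order of $A$; hence $X_{i^*}$ contains a subset of order type $\sum_{\xi\in A}\theta^m=\theta^m\cdot\mathrm{otp}(A)=\theta^m\cdot\theta=\theta^{m+1}$, so $\mathrm{otp}(X_{i^*})\geq\theta^{m+1}$, contradicting $\mathrm{otp}(X_{i^*})<\theta^{m+1}$. I expect the main obstacle to be exactly this central observation in the successor step — that the contrapositive of the hypothesis for $m$ produces, in each of the $\theta$ blocks, a full-order-type piece coming from one of the $X_i$ — after which regular-cardinal pigeonhole concentrates these $\theta$ pieces in a single $X_{i^*}$ and the ordinal arithmetic closes the argument.
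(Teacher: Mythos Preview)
Your proof is correct and follows essentially the same approach as the paper: induct on $m$, take an initial segment of $X$ of order type exactly $\theta^m$, slice it into $\theta$ consecutive blocks each of order type $\theta^{m-1}$, invoke the induction hypothesis on each block to find a single $X_{i(\xi)}$ covering the full block, and then pigeonhole on the regular cardinal $\theta$ to get one $X_{i^*}$ of order type at least $\theta^m$. The only cosmetic difference is that the paper treats the cases $m=0,1$ as immediate and runs the argument for $m\ge 2$, whereas you phrase the induction as going from $m$ to $m+1$ with base case $m=0$; the substance is identical.
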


\begin{proof}
By induction on $m$. The conclusion is immediate for $m=0$ and $m=1$. Let $m\geq 2$ and suppose for sake of contradiction that $\mathrm{otp}(X)\geq \theta^m$. Fix $A\subseteq X$ of order type exactly $\theta^m$. Enumerate $A$ in increasing order as $A=\{a_\alpha \mid \alpha<\theta^m\}$. For each $\beta<\theta$, let $A_\beta = \{a_{\theta^{m-1} \cdot \beta +\gamma} \mid \gamma <\theta^{m-1}\}$. Then $\mathrm{otp}(A_\beta)=\theta^{m-1}$, so by the induction hypothesis, there is $i_\beta<\mu$ such that $\mathrm{otp}(X_{i_\beta}\cap A_\beta)=\theta^{m-1}$. Thus, there is an $i^*<\mu$ such that $i_\beta=i^*$ for unboundedly many $\beta<\theta$. But then $\mathrm{otp}(X_{i^*})\geq \theta^m$. Contradiction. 
\end{proof}

\begin{proposition}
There is a uniform, normal $\kappa$-covering matrix for $\kappa^+$.
\end{proposition}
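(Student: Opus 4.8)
The plan is to reuse the recursive idea behind the transitive, normal matrix constructed above, but to force uniformity by hand: I will arrange that each set $D(i,\beta)$ already contains a fixed club in $\beta$. Uniformity will then be essentially automatic, and the real work — which I expect to be the main obstacle — is to keep the order types of the $D(i,\beta)$ bounded below $\kappa^+$, so that the matrix remains normal. To set up, I fix for each $\beta<\kappa^+$ a surjection $\phi_\beta:\kappa\to\beta$ together with a set $c_\beta\subseteq\beta$ that, for limit $\beta$, is club in $\beta$ with $\mathrm{otp}(c_\beta)=\mathrm{cf}(\beta)$, and for non-limit $\beta=\delta+1$ is taken to be $\{\delta\}$ (with $c_0=\emptyset$).

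I would then define, by recursion on $\beta$ and simultaneously for all $i<\kappa$,
\[ D(i,\beta) = c_\beta \cup \phi_\beta[i] \cup \bigcup_{\alpha\in\phi_\beta[i]} D(i,\alpha). \]
The first task is to verify the three covering-matrix axioms. Axiom (1) is immediate, since $\phi_\beta$ is onto $\beta$ gives $\bigcup_{i<\kappa}\phi_\beta[i]=\beta$, while a routine induction shows $D(i,\beta)\subseteq\beta$. Monotonicity in $i$ (axiom (2)) follows by induction on $\beta$, using $\phi_\beta[i]\subseteq\phi_\beta[j]$ for $i\le j$ together with the inductive monotonicity of the sets $D(i,\alpha)$. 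For the coherence axiom (3), given $\beta<\gamma$ and $i<\kappa$, I would choose $j\ge i$ large enough that $\beta\in\phi_\gamma[j]$, which is possible because $\phi_\gamma$ is onto $\gamma$; then $D(j,\beta)\subseteq D(j,\gamma)$ directly from the definition of $D(j,\gamma)$, and $D(i,\beta)\subseteq D(j,\beta)$ by axiom (2), so $D(i,\beta)\subseteq D(j,\gamma)$.

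Uniformity then comes for free: by construction $c_\beta\subseteq D(i,\beta)$ for every $i<\kappa$, and for limit $\beta$ the set $c_\beta$ is a club in $\beta$, so every $D(i,\beta)$ contains a club. The main obstacle is normality, which I would establish by proving by induction on $\beta$ that $\mathrm{otp}(D(i,\beta))<\kappa^2$ for all $i<\kappa$. At the inductive step, $D(i,\beta)$ is a union of fewer than $\kappa$ many sets: the single set $c_\beta$, of order type $\mathrm{cf}(\beta)\le\kappa<\kappa^2$; the set $\phi_\beta[i]$, of size $|i|<\kappa$; and the $|\phi_\beta[i]|<\kappa$ many sets $D(i,\alpha)$ for $\alpha\in\phi_\beta[i]$, each of order type $<\kappa^2$ by the induction hypothesis. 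Applying the ordinal-arithmetic lemma with $\theta=\kappa$ and $m=2$ yields $\mathrm{otp}(D(i,\beta))<\kappa^2$. Hence $\beta_{\mathcal{D}}\le\kappa^2<\kappa^+$, so $\mathcal{D}$ is normal, and the construction is complete. The one place to be careful is that the lemma requires strictly fewer than $\kappa$ sets in the union, which is exactly why including the whole club $c_\beta$ as a single set (rather than recursively unioning over its points) is essential to avoid inflating the order type.
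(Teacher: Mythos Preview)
Your proof is correct and follows essentially the same approach as the paper: fix clubs and surjections, define $D(i,\beta)$ recursively so that it contains the club and the images $\phi_\beta[i]$ together with the corresponding lower levels, and then bound the order types below $\kappa^2$ via the ordinal-arithmetic lemma. The only difference is organizational: the paper sets $D(0,\beta)=C_\beta$ and then builds the levels by a secondary recursion on $i$ (taking unions at limit $i$ and throwing in $\phi_\beta[i]$ at successor $i$), whereas you collapse this into a single recursion on $\beta$ by including $c_\beta$ at every level---a harmless simplification that yields the same normality bound $\beta_{\mathcal{D}}\le\kappa^2$.
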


\begin{proof}
For each $\alpha<\kappa^+$, let $C_\alpha$ be a club in $\alpha$ such that $\mathrm{otp}(C_\alpha)\leq \kappa$, and let $\phi_\alpha :\kappa \rightarrow \alpha$ be a surjection. We define $\mathcal{D}=\{D(i,\beta) \mid i<\kappa, \beta<\kappa^+\}$ by recursion on $\beta$ and, for fixed $\beta$, by recursion on $i$. For each $\beta<\kappa^+$, let $D(0,\beta)=C_\beta$. If $i<\kappa$ is a limit ordinal, let \[D(i,\beta)=\bigcup_{j<i}D(j,\beta).\] Finally, let \[D(i+1,\beta)=D(i,\beta)\cup \phi_\beta[i]\cup \bigcup_{\alpha \in \phi_\beta[i]} D(i+1,\alpha).\]

It is easily verified that $\mathcal{D}$ is a $\kappa$-covering matrix for $\kappa^+$ and, by construction, $D(0,\beta)$ contains a club in $\beta$ for each $\beta<\kappa^+$. It remains to show that $\mathcal{D}$ is normal. We in fact prove by induction on $\beta<\kappa^+$ and, for fixed $\beta$, by induction on $i<\kappa$, that $\mathrm{otp}(D(i,\beta))<\kappa^2$ for all $i$ and $\beta$. Fix $i<\kappa$ and $\beta<\kappa^+$. By the inductive hypothesis, $D(i,\beta)$ is a union of fewer than $\kappa$-many sets, all of which have order type less than $\kappa^2$. Then, by the previous lemma, $\mathrm{otp}(D(i,\beta))<\kappa^2$. Thus, $\beta_{\mathcal{D}}\leq \kappa^2 <\kappa^+$, so $\mathcal{D}$ is normal. 
\end{proof}

However, we can not always get all three properties, since $\mathrm{CP}(\mathcal{D}$) and $\mathrm{S}(\mathcal{D}$) necessarily fail for a transitive, normal, uniform $\kappa$-covering matrix for $\kappa^+$.

\begin{lemma}
If $\mathcal{D}$ is a normal $\kappa$-covering matrix for $\kappa^+$, then $\mathrm{CP}(\mathcal{D}$) fails.
\end{lemma}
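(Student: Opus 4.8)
The plan is to argue by contradiction, playing off the fact that normality imposes a uniform ceiling on the order types of every $D(i,\beta)$ against the requirement that a witness to $\mathrm{CP}(\mathcal{D})$ must cover subsets of $[T]^\kappa$ of large order type. The governing observation is that if $X \subseteq D(i,\beta)$, then $\mathrm{otp}(X) \le \mathrm{otp}(D(i,\beta)) < \beta_{\mathcal{D}}$, and normality guarantees $\beta_{\mathcal{D}} < \kappa^+$. So first I would record that, were $\mathrm{CP}(\mathcal{D})$ to hold with witness $T$, every $X \in [T]^\kappa$ would be covered and hence satisfy $\mathrm{otp}(X) < \beta_{\mathcal{D}}$.

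Next I would establish the key auxiliary fact $\beta_{\mathcal{D}} \ge \kappa$, which is what makes the order type $\beta_{\mathcal{D}}$ realizable by a subset of $T$ of size exactly $\kappa$. Applying clause (1) of the definition at $\beta = \kappa$ gives $\kappa = \bigcup_{i<\kappa} D(i,\kappa)$, an increasing union by clause (2). Fixing $\eta < \kappa$, the initial segment $\eta + 1$ has cardinality $< \kappa = \mathrm{cf}(\kappa)$, so the map sending each $\xi \le \eta$ to the least $i$ with $\xi \in D(i,\kappa)$ is bounded below $\kappa$; by increasingness $\eta + 1 \subseteq D(i,\kappa)$ for a single $i$, whence $\mathrm{otp}(D(i,\kappa)) > \eta$. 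Since $\eta$ was arbitrary, $\beta_{\mathcal{D}} > \eta$ for every $\eta < \kappa$, i.e. $\beta_{\mathcal{D}} \ge \kappa$. Together with normality this yields $\kappa \le \beta_{\mathcal{D}} < \kappa^+$, so $|\beta_{\mathcal{D}}| = \kappa$.

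Finally, assuming $\mathrm{CP}(\mathcal{D})$ holds and is witnessed by an unbounded $T \subseteq \kappa^+$, I would note that regularity of $\kappa^+$ forces $\mathrm{otp}(T) = \kappa^+ > \beta_{\mathcal{D}}$, so I can choose $X \subseteq T$ with $\mathrm{otp}(X) = \beta_{\mathcal{D}}$; since $|\beta_{\mathcal{D}}| = \kappa$, this $X$ lies in $[T]^\kappa$. By the witnessing property there are $i < \kappa$ and $\beta < \kappa^+$ with $X \subseteq D(i,\beta)$, giving $\beta_{\mathcal{D}} = \mathrm{otp}(X) < \beta_{\mathcal{D}}$, the desired contradiction.

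The bulk of the argument is routine; the one point demanding care is the second step, where I resist the tempting but false claim that some single $D(i,\kappa)$ already has order type $\kappa$. For increasing unions the order type of the union can strictly exceed the supremum of the order types of the pieces (a few large elements can inflate it), so I instead extract the bound $\beta_{\mathcal{D}} \ge \kappa$ by covering arbitrary bounded initial segments of $\kappa$ and invoking $\mathrm{cf}(\kappa) = \kappa$. This is the step I expect to be the main obstacle, and everything else follows from comparing order types against $\beta_{\mathcal{D}}$.
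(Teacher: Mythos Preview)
Your argument is correct and follows the paper's approach: exhibit an $X \in [T]^\kappa$ with $\mathrm{otp}(X) \geq \beta_{\mathcal{D}}$, which cannot sit inside any $D(i,\beta)$. Your auxiliary proof that $\beta_{\mathcal{D}} \geq \kappa$ is correct but unnecessary---the paper's one-sentence proof simply asserts such an $X$ exists, which follows by cases without inspecting $D(i,\kappa)$: if $\beta_{\mathcal{D}} < \kappa$ then every $X \in [T]^\kappa$ already has order type $\geq \kappa > \beta_{\mathcal{D}}$, while if $\beta_{\mathcal{D}} \geq \kappa$ one takes the first $\beta_{\mathcal{D}}+1$ elements of $T$.
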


\begin{proof}
Let $T$ be an unbounded subset of $\kappa^+$. Then any $X \in [T]^{\kappa}$ whose order type is greater than $\beta_{\mathcal{D}}$ can not be contained in any $D(i,\beta)$. 
\end{proof}
\ 
Since $\mathrm{S}(\mathcal{D}$) implies $\mathrm{CP}(\mathcal{D})$ whenever $\mathcal{D}$ is transitive, $\mathrm{S}(\mathcal{D}$) fails for every transitive, normal $\kappa$-covering matrix for $\kappa^+$.

\begin{proposition}
If $\mathrm{R}(\kappa^+, \kappa)$ holds, then there are no transitive, normal, uniform $\kappa$-covering matrices for $\kappa^+$.
\end{proposition}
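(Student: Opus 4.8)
The plan is to derive a contradiction by chaining together the three results already established, so that no new combinatorial work is required. Suppose toward a contradiction that $\mathcal{D}$ is a transitive, normal, uniform $\kappa$-covering matrix for $\kappa^+$. Since $\kappa$ is a regular uncountable cardinal, $\kappa^+$ is regular and $\kappa<\kappa^+$, so $\mathrm{R}(\kappa^+,\kappa)$ is exactly the instance of $\mathrm{R}(\lambda,\theta)$ obtained by setting $\lambda=\kappa^+$ and $\theta=\kappa$. With these parameters all of the cited lemmas apply to $\mathcal{D}$.

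First I would use uniformity: since $\mathcal{D}$ is uniform and $\mathrm{R}(\kappa^+,\kappa)$ holds, part (3) of the lemma from \cite{sharonviale} yields $\mathrm{S}(\mathcal{D})$. Next I would use transitivity: since $\mathcal{D}$ is transitive, part (1) of that same lemma shows that $\mathrm{S}(\mathcal{D})$ implies $\mathrm{CP}(\mathcal{D})$, so $\mathrm{CP}(\mathcal{D})$ holds. Finally I would invoke normality: the lemma asserting that $\mathrm{CP}(\mathcal{D})$ fails for every normal $\kappa$-covering matrix for $\kappa^+$ gives that $\mathrm{CP}(\mathcal{D})$ fails. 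These last two conclusions contradict each other, which completes the proof.

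Since every step is a direct appeal to a previously proved implication, there is no genuine obstacle here; the only point requiring care is the bookkeeping, namely confirming that the regularity hypotheses and the identification $(\lambda,\theta)=(\kappa^+,\kappa)$ are in place so that each lemma is applicable, and that the single matrix $\mathcal{D}$ simultaneously carries all three of the properties (uniform, transitive, normal) demanded by the three separate implications. Both of these are immediate from the contradiction hypothesis, so the argument is simply the observation that uniformity feeds $\mathrm{R}(\kappa^+,\kappa)$ into $\mathrm{S}(\mathcal{D})$, transitivity feeds $\mathrm{S}(\mathcal{D})$ into $\mathrm{CP}(\mathcal{D})$, and normality forbids $\mathrm{CP}(\mathcal{D})$.
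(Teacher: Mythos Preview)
Your proof is correct and follows exactly the same route as the paper: the paper first uses uniformity together with $\mathrm{R}(\kappa^+,\kappa)$ to obtain $\mathrm{S}(\mathcal{D})$, and then invokes the observation (recorded just before the proposition) that $\mathrm{S}(\mathcal{D})$ fails for any transitive, normal $\kappa$-covering matrix for $\kappa^+$, which is precisely your chain $\mathrm{S}(\mathcal{D})\Rightarrow\mathrm{CP}(\mathcal{D})$ (transitivity) contradicting $\neg\mathrm{CP}(\mathcal{D})$ (normality). The only difference is packaging: the paper cites that intermediate observation as a single step, whereas you unpack it into its two constituent implications.
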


\begin{proof}
$\mathrm{R}(\kappa^+, \kappa)$ implies that every uniform $\kappa$-covering matrix $\mathcal{D}$ for $\kappa^+$ satisfies $\mathrm{S}(\mathcal{D}$).  But we saw above that $\mathrm{S}(\mathcal{D}$) fails for every transitive, normal $\kappa$-covering matrix for $\kappa^+$. 
\end{proof}

\begin{corollary}
If MM holds, then there are no transitive, normal, uniform $\omega_1$-covering matrices for $\omega_2$.
\end{corollary}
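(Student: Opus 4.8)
The plan is to deduce the corollary from the preceding proposition by showing that MM implies $\mathrm{R}(\omega_2,\omega_1)$. Specializing that proposition to $\kappa=\omega_1$, so that $\kappa^+=\omega_2$, the principle $\mathrm{R}(\omega_2,\omega_1)$ rules out the existence of any transitive, normal, uniform $\omega_1$-covering matrix for $\omega_2$. Thus the entire content of the corollary is the implication $\mathrm{MM}\Rightarrow\mathrm{R}(\omega_2,\omega_1)$, and I would reduce to proving this and nothing more.

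To verify $\mathrm{R}(\omega_2,\omega_1)$, I would take the witnessing stationary set to be $S=S^{\omega_2}_\omega$. Since the only ordinals below $\omega_2$ of uncountable cofinality are those of cofinality $\omega_1$, the principle $\mathrm{R}(\omega_2,\omega_1)$ reduces, for this choice of $S$, to the statement that any family $\{S_j\mid j<\omega_1\}$ of stationary subsets of $S^{\omega_2}_\omega$ reflects simultaneously at some $\alpha\in S^{\omega_2}_{\omega_1}$. This is exactly the simultaneous stationary reflection consequence of Martin's Maximum isolated by Foreman, Magidor, and Shelah: under MM, for every regular $\lambda\geq\omega_2$ and every family of at most $\omega_1$ stationary subsets of $S^{\lambda}_\omega$, the members of the family reflect simultaneously at stationarily many points of cofinality $\omega_1$. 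Applying this with $\lambda=\omega_2$ yields $\mathrm{R}(\omega_2,\omega_1)$ immediately, and the corollary follows. The only thing worth checking is the trivial point that each $S_j$, being a subset of $S^{\omega_2}_\omega$, concentrates on cofinality $\omega$, so that the hypothesis of the reflection result applies.

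If one preferred to reprove the reflection principle from scratch rather than cite it, the key step would be to design, for a fixed family $\{S_j\mid j<\omega_1\}$ of stationary subsets of $S^{\omega_2}_\omega$, a forcing adding a continuous increasing sequence $\langle\alpha_\xi\mid\xi<\omega_1\rangle$ cofinal in some $\alpha$ of cofinality $\omega_1$, so that $\{\xi<\omega_1\mid\alpha_\xi\in S_j\}$ is stationary in $\omega_1$ for every $j$. Conditions would be countable closed initial segments of such a sequence, and the continuity of the generic sequence would then force $S_j\cap\alpha$ to be stationary in $\alpha$ for each $j$. One would then apply MM to the natural collection of $\omega_1$-many dense sets ensuring that each $S_j$ is met cofinally and stationarily often. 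The main obstacle, and the real heart of the argument, is proving that this forcing is stationary-set-preserving, so that MM is applicable at all; this is precisely the step where the concentration of the $S_j$ on cofinality $\omega$ is exploited, and it is the reason the statement is restricted to covering matrices for $\omega_2$ under MM rather than being provable in ZFC.
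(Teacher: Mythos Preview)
Your proposal is correct and follows essentially the same approach as the paper: apply the preceding proposition with $\kappa=\omega_1$ and invoke the Foreman--Magidor--Shelah consequence of MM that $\mathrm{R}(\omega_2,\omega_1)$ holds as witnessed by $S^{\omega_2}_\omega$. The paper's proof is a single sentence citing this same fact (stated slightly more generally for all $\aleph_n$, $1<n<\omega$), while your optional sketch of the underlying forcing argument is additional detail beyond what the paper provides.
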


\begin{proof}
 MM implies that $\mathrm{R}(\aleph_n, \aleph_1$) holds for every $1<n<\omega$ as witnessed by $S^{\aleph_n}_{\aleph_0}$.
\end{proof}

The existence of a transitive, normal, uniform covering matrix does follow, however, from sufficiently strong square principles.

\begin{proposition}
Suppose $\kappa$ is a regular cardinal and $\square_{\kappa, <\kappa}$ holds. Then there is a transitive, normal, uniform $\kappa$-covering matrix for $\kappa^+$.
\end{proposition}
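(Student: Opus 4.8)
\emph{Proof proposal.} The plan is to distill from a $\square_{\kappa,<\kappa}$-sequence a single transitive closure operation that simultaneously carries a club at every ordinal (for uniformity) and can be filled out to all of $\kappa^+$ (for covering). Fix a $\square_{\kappa,<\kappa}$-sequence $\langle \mathcal{C}_\alpha \mid \alpha<\kappa^+\rangle$: each $\mathcal{C}_\alpha$ is a set of clubs in $\alpha$ with $1\le |\mathcal{C}_\alpha|<\kappa$, every $C\in\mathcal{C}_\alpha$ satisfies $\mathrm{otp}(C)\le\kappa$, and coherence holds in the sense that if $C\in\mathcal{C}_\alpha$ and $\gamma$ is a limit point of $C$, then $C\cap\gamma\in\mathcal{C}_\gamma$. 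Also fix surjections $\phi_\alpha:\kappa\rightarrow\alpha$. I would then define, by recursion on $\beta<\kappa^+$ (with all levels $i<\kappa$ handled at once),
\[ D(i,\beta)=\phi_\beta[i]\cup\bigcup\mathcal{C}_\beta\cup\bigcup\Big\{D(i,\alpha)\;\Big|\;\alpha\in\phi_\beta[i]\cup\textstyle\bigcup\mathcal{C}_\beta\Big\}. \]
Here the surjection seeds force covering, the club seeds force uniformity, and closing under $D(i,\cdot)$ over the seeds builds in transitivity.

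The routine axioms I would dispatch first. Property (1) holds since $\bigcup_{i<\kappa}\phi_\beta[i]=\beta$; monotonicity in $i$ (property (2)) follows by induction on $\beta$, the seeds $\phi_\beta[i]$ growing with $i$ and, inductively, $D(i,\alpha)\subseteq D(j,\alpha)$ for $\alpha<\beta$ and $i<j$. Transitivity is immediate from the shape of the definition: if $\alpha\in D(i,\beta)$, then either $\alpha$ lies in a seed set, in which case $D(i,\alpha)$ was explicitly thrown into the defining union, or $\alpha\in D(i,\alpha')$ for some seed $\alpha'<\beta$, in which case $D(i,\alpha)\subseteq D(i,\alpha')\subseteq D(i,\beta)$ by the inductive transitivity at $\alpha'$. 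Given transitivity and monotonicity, the cross-coherence axiom (3) is free: for $\beta<\gamma$ pick $j_0$ with $\beta\in D(j_0,\gamma)$, and for any $i$ set $j=\max(i,j_0)$, so that $D(i,\beta)\subseteq D(j,\beta)\subseteq D(j,\gamma)$, the last step by transitivity at level $j$. Finally, uniformity is clear, since $\bigcup\mathcal{C}_\beta\subseteq D(i,\beta)$ contains a club in $\beta$ for every $i$.

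The crux, and the only place the hypothesis is genuinely consumed, is \textbf{normality}: I must bound $\mathrm{otp}(D(i,\beta))$ by one fixed ordinal below $\kappa^+$, uniformly in $i$ and $\beta$. Coherence is indispensable here, for an \emph{arbitrary} assignment of fewer than $\kappa$ clubs of order type $\le\kappa$ to each $\beta$ would let the transitive closure climb to order type $\kappa^+$; indeed, by the Proposition above, no transitive, normal, uniform such matrix can exist once $\mathrm{R}(\kappa^+,\kappa)$ holds. I would prove by induction on $\beta$ that $\mathrm{otp}(D(i,\beta))$ stays below a fixed power $\kappa^{m}$ (with some care this might only give $\kappa^{\omega}$, still $<\kappa^+$ and still uniform). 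The point is that the expensive term, the union of $D(i,\alpha)$ over the up-to-$\kappa$ many $\alpha\in\bigcup\mathcal{C}_\beta$, does not accumulate order type: when $\alpha$ is a limit point of a club $C\in\mathcal{C}_\beta$, coherence gives $C\cap\alpha\in\mathcal{C}_\alpha$, so the portion of $D(i,\alpha)$ generated through that club is a coherent initial segment already sitting inside $\bigcup\mathcal{C}_\beta$ and its previously counted closure, rather than new ordinals stacked on top. The genuinely new contributions then come only from the successor points of the clubs and from the remaining fewer-than-$\kappa$ clubs at each point, and these are controlled by the ordinal-arithmetic lemma (unions of $<\kappa$ sets of order type $<\kappa^m$ again have order type $<\kappa^m$) together with $\mathrm{otp}(C)\le\kappa$. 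Making this absorption precise across the $\omega$-many closure stages — showing that the coherent initial segments never push the running order type past the fixed bound — is the main technical obstacle, and it is exactly what the $\square_{\kappa,<\kappa}$ coherence is for.
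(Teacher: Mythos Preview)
Your verification of the covering-matrix axioms, transitivity, and uniformity is fine; the gap is exactly where you locate it, in normality, and the absorption heuristic you sketch does not close the induction.

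The failure point is the cofinality-$\kappa$ case. If $\mathrm{cf}(\beta)=\kappa$, then some $C\in\mathcal{C}_\beta$ has order type $\kappa$, so your seed set $\phi_\beta[i]\cup\bigcup\mathcal{C}_\beta$ has $\kappa$ many points, and $D(i,\beta)$ is a union of $\kappa$ (not $<\kappa$) many sets $D(i,\alpha)$. The ordinal-arithmetic lemma therefore does not apply. Your absorption argument tries to fix this by saying that, for a limit point $\alpha$ of $C$, the portion of $D(i,\alpha)$ generated by the coherent club $C\cap\alpha\in\mathcal{C}_\alpha$ already sits inside what you have. But $D(i,\alpha)$ is also generated by $\phi_\alpha[i]$ and by the \emph{other} clubs in $\mathcal{C}_\alpha$, and those contributions are not absorbed by anything. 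Each of the $\kappa$ many limit points $\alpha$ throws in these unabsorbed seeds, and then their $D(i,\cdot)$'s, and so on; even granting an inductive bound of $\kappa^m$ below $\beta$, you only get $\kappa^{m+1}$ at $\beta$. Since there are $\kappa^+$ many cofinality-$\kappa$ ordinals, no fixed bound (not $\kappa^m$, not $\kappa^\omega$) survives the induction.

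The paper's construction addresses this not by absorption but by \emph{emptiness}: it dispenses with the surjections entirely and instead declares $D(i,\beta)=\emptyset$ at every $\beta$ of cofinality $<\kappa$ for which $\sup_{C\in\mathcal{C}_\beta}\mathrm{otp}(C)\ge\omega\cdot i$. Now at a cofinality-$\kappa$ point one fixes a single $C\in\mathcal{C}_\beta$ and recurses only over $C'$. Every $\alpha\in C'$ has cofinality $<\kappa$ and, by coherence, carries $C\cap\alpha\in\mathcal{C}_\alpha$; so for all but the first $i$ of them, $\mathrm{otp}(C\cap\alpha)\ge\omega\cdot i$ forces $D(i,\alpha)=\emptyset$. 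Thus the union is really over fewer than $\kappa$ nonempty sets, and the lemma gives $\mathrm{otp}(D(i,\beta))<\kappa^2$ uniformly. This ``kill the tail'' device is the missing idea; coherence is used to guarantee that the emptiness threshold is met cofinally along $C'$, not to absorb order types.
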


\begin{proof}
Let $\langle \mathcal{C}_\alpha \mid \alpha \in \mathrm{lim}(\kappa^+)\rangle$ be a $\square_{\kappa, <\kappa}$-sequence. We construct a transitive, normal, uniform $\kappa$-covering matrix for $\kappa^+$, $\mathcal{D}=\{D(i,\beta):i<\kappa,\  \beta<\kappa^+\}$, by recursion on $\beta$ as follows:
\begin{itemize}
\item{$D(i,\beta+1)=D(i,\beta)\cup \{\beta\}$}
\item{If $\beta$ is a limit ordinal and $\mathrm{cf}(\beta)<\kappa$, fix $E$, a club in $\beta$ of order type less than $\kappa$, and let \[D(i,\beta) = 
\begin{cases}
\emptyset  & \text{if } \sup_{C\in \mathcal{C}_\beta}\mathrm{otp}(C)\geq \omega \cdot i \\
E \cup \bigcup_{\alpha \in E} D(i,\alpha) & \text{if } \sup_{C\in \mathcal{C}_\beta}\mathrm{otp}(C)< \omega \cdot i
\end{cases}\]}
\item{If $\mathrm{cf}(\beta)=\kappa$, fix $C\in \mathcal{C}_\beta$, and let \[D(i,\beta)=C' \cup \bigcup_{\alpha \in C'}D(i,\alpha)\]}
\end{itemize}
It is routine to check that $\mathcal{D}$ is a uniform $\kappa$-covering matrix for $\kappa^+$, and an easy induction shows that it is transitive. We claim that $\mathcal{D}$ is normal. We prove that $\mathrm{otp}(D(i,\beta))<\kappa^2$ for every $i<\kappa$ and $\beta<\kappa^+$ by induction on $\beta$. If $\beta$ is a successor ordinal or a limit ordinal of cofinality less than $\kappa$, then $D(i,\beta)$ is the union of fewer than $\kappa$-many sets, each, by the induction hypothesis, of order type less than $\kappa^2$. Thus, $\mathrm{otp}(D(i,\beta))<\kappa^2$. Suppose $\mathrm{cf}(\beta)=\kappa$. Let $C\in \mathcal{C_\beta}$ be the club used in the construction of $D(i,\beta)$. Enumerate $C'$ in increasing order as $\{\alpha_\gamma \mid \gamma<\kappa\}$. For each $\gamma<\kappa$, $C \cap \alpha_\gamma \in \mathcal{C}_{\alpha_\gamma}$, so for $\gamma \geq i$, $D(i, \alpha_\gamma) = \emptyset$. Thus, $D(i,\beta)$ is itself a union of fewer than $\kappa$-many sets, each of order type less than $\kappa^2$, so $\mathrm{otp}(D(i,\beta))<\kappa^2$. 
\end{proof}

We now show that $\square_{\kappa, <\kappa}$ is the optimal hypothesis in the previous proposition by producing, via a standard argument due originally to Baumgartner \cite{baumgartner}, a model in which $\square^*_\kappa$ and $\mathrm{R}(\kappa^+, \kappa)$ both hold. We need the following lemma, due to Shelah.

\begin{lemma}
\label{aplem}
Let $\mu \leq \kappa$ be regular cardinals, let $S\subseteq S^{\kappa^+}_{<\mu}$ be stationary, and let $\mathbb{P}$ be a $\mu$-closed forcing poset. If $G$ is $\mathbb{P}$-generic over $V$, then $S$ is stationary in $V[G]$.
\end{lemma}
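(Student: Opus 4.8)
The plan is to prove the contrapositive in the usual way: I will fix a condition $p$ and a $\mathbb{P}$-name $\dot{C}$ with $p \Vdash ``\dot{C}$ is a club in $\kappa^+$'', and produce $q \leq p$ and $\alpha \in S$ with $q \Vdash \check{\alpha} \in \dot{C}$. Since $p$ and $\dot{C}$ are arbitrary, this shows $S$ is still stationary. The key structural remark I will exploit is that $\dot{C}$ is forced to be \emph{closed}: so if I can find $q \leq p$ and $\alpha \in S$ together with a set of ordinals cofinal in $\alpha$ that $q$ forces into $\dot{C}$, then automatically $q \Vdash \check{\alpha} \in \dot{C}$. Thus the whole problem reduces to catching a point $\alpha \in S$ as a forced limit of $\dot{C}$, and the hypothesis $\mathrm{cf}(\alpha) < \mu$ is exactly what will let me do this against a $\mu$-closed poset. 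I will also fix once and for all, in $V$, a function $f$ sending each pair $(r,\beta)$ with $r \leq p$ and $\beta < \kappa^+$ to a pair $(r',\gamma)$ with $r' \leq r$, $\gamma \geq \beta$, and $r' \Vdash \check{\gamma} \in \dot{C}$; such $f$ exists because $p$ forces $\dot{C}$ unbounded.

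Next I would build, in $V$, a continuous $\in$-increasing chain $\langle N_\xi \mid \xi < \kappa^+ \rangle$ of elementary submodels of $H(\theta)$ for suitably large regular $\theta$, each of size $\kappa$, with $\kappa \cup \{p, \dot{C}, \mathbb{P}, S, f\} \subseteq N_0$, with $\langle N_\eta \mid \eta \leq \xi \rangle \in N_{\xi+1}$, and with $\delta_\xi := N_\xi \cap \kappa^+ \in \kappa^+$. The set $E = \{\delta_\xi \mid \xi < \kappa^+\}$ is then club. Because $S$ is stationary and the set $E'$ of limit points of $E$ is club, I can pick $\alpha \in S \cap E'$; writing $\alpha = \delta_\eta$, the point $\eta$ is a limit ordinal with $\mathrm{cf}(\eta) = \mathrm{cf}(\alpha) = \rho$, and $\rho < \mu$ since $\alpha \in S \subseteq S^{\kappa^+}_{<\mu}$. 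Fixing a continuous increasing sequence $\langle \eta_k \mid k < \rho \rangle$ cofinal in $\eta$, the ordinals $\delta_{\eta_k}$ are continuous, increasing, and cofinal in $\alpha$.

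With this in place I would recursively construct a descending sequence $\langle p_k \mid k < \rho \rangle$ below $p$, together with ordinals $\gamma_k$, maintaining that the relevant initial segments lie in the models. At a successor step, given $p_k$ lying in $N_{\eta_{k+1}}$, I apply elementarity of $N_{\eta_{k+1}}$ (which contains $f$, $p_k$, and $\delta_{\eta_{k+1}} = N_{\eta_{k+1}} \cap \kappa^+$) to $f(p_k, \delta_{\eta_{k+1}})$, obtaining $p_{k+1} \leq p_k$ and $\gamma_k \in N_{\eta_{k+1}}$ with $\gamma_k \geq \delta_{\eta_{k+1}}$ and $p_{k+1} \Vdash \check{\gamma}_k \in \dot{C}$; since $\gamma_k \in N_{\eta_{k+1}} \cap \kappa^+$, we get $\delta_{\eta_{k+1}} \leq \gamma_k < \delta_{\eta_{k+1}+1} \leq \alpha$, so the $\gamma_k$ remain below $\alpha$ yet cofinal in it. At a limit step $k < \rho$, the initial segment $\langle p_{k'} \mid k' < k \rangle$ has length $< \rho < \mu$, so a lower bound exists by $\mu$-closedness of $\mathbb{P}$, and I arrange (via canonical $<^*$-least choices and the fact that $\langle N_\eta \mid \eta \leq \xi \rangle \in N_{\xi+1}$) that such a lower bound can be found inside the appropriate model, so that elementarity is again available at the next successor. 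Finally, using $\rho < \mu$ once more, I take $q \leq p_k$ for all $k < \rho$; then $q \Vdash \{\check{\gamma}_k \mid k < \rho\} \subseteq \dot{C}$, and since $\sup_k \gamma_k = \alpha$ and $\dot{C}$ is forced closed, $q \Vdash \check{\alpha} \in \dot{C}$ with $\alpha \in S$, as required.

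The \textbf{main obstacle} is precisely the limit-stage bookkeeping in the last paragraph: $\mu$-closedness hands me a lower bound of $\langle p_{k'} \mid k' < k \rangle$ somewhere in $V$, but to run the elementarity argument at the following successor step I need that lower bound to live inside the model $N_{\eta_{k+1}}$. Guaranteeing this forces me to keep the \emph{entire} descending sequence built so far inside the models, which in turn requires that the initial segments $\langle \eta_{k'} \mid k' < k \rangle$ of my externally chosen cofinal sequence be captured as well. The clean way to handle this is to make all choices canonical and to replace the chain by an internally approachable one, so that each proper initial segment of the construction is an element of the next model; with that in hand the lower bound can be located by applying $\mu$-closedness \emph{inside} the model. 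I expect everything else — verifying that $E$ is club, that $\alpha \in S \cap E'$ has cofinality $\rho < \mu$, and the successor-step elementarity computation — to be routine.
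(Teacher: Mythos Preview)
The paper does not actually prove this lemma: it is stated as a known result due to Shelah and no argument is given. So there is no ``paper's own proof'' to compare against; your sketch is supplying a proof where the paper simply cites one.

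Your overall strategy is the standard and correct one. One small indexing slip: you write that $N_{\eta_{k+1}}$ contains $\delta_{\eta_{k+1}} = N_{\eta_{k+1}} \cap \kappa^+$ and then conclude both $\gamma_k \in N_{\eta_{k+1}}$ and $\gamma_k \geq \delta_{\eta_{k+1}}$, which is impossible since $N_{\eta_{k+1}} \cap \kappa^+ = \delta_{\eta_{k+1}}$. The fix is routine: feed $f$ the pair $(p_k, \delta_{\eta_k})$ and work inside $N_{\eta_k+1}$ (which does contain $\delta_{\eta_k}$, since $N_{\eta_k} \in N_{\eta_k+1}$). Then $p_{k+1}, \gamma_k \in N_{\eta_k+1}$, so $\delta_{\eta_k} \leq \gamma_k < \delta_{\eta_k+1} \leq \delta_{\eta_{k+1}}$, and the $\gamma_k$ are cofinal in $\alpha$ as needed. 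With this adjustment the successor step goes through, and your discussion of the limit-stage bookkeeping (internal approachability plus canonical choices so that each initial segment of the construction lies in the next model, allowing the $\mu$-closed lower bound to be taken there) is exactly the right way to handle the one genuine issue.
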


\begin{proposition}
Let $\kappa<\lambda$, with $\kappa$ regular and $\lambda$ measurable. Let $\mathbb{P}=\mathrm{Coll}(\kappa, <\lambda)$, and let $G$ be $\mathbb{P}$-generic over $V$. Then, in $V[G]$, $\kappa^{<\kappa}=\kappa$ and $R(\lambda, \kappa)$ hold.
\end{proposition}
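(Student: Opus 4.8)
The plan is to exploit the measurability of $\lambda$ by lifting a normal ultrapower embedding through the collapse and reading off simultaneous reflection at the ordinal $\lambda$ itself; elementarity then pushes this down below $\lambda$. First I would record the basic behaviour of $\mathbb{P}=\mathrm{Coll}(\kappa,<\lambda)$: since $\lambda$ is measurable, hence inaccessible, $\mathbb{P}$ is $\kappa$-closed, has the $\lambda$-chain condition, has size $\lambda$, and collapses every cardinal in $(\kappa,\lambda)$ to $\kappa$ while preserving $\kappa$ and $\lambda$, so that $\lambda=(\kappa^+)^{V[G]}$. The equation $\kappa^{<\kappa}=\kappa$ then follows by a routine count: $\kappa$-closure means $\mathbb{P}$ adds no new function $\mu\to\kappa$ for $\mu<\kappa$, so $({}^\mu\kappa)^{V[G]}=({}^\mu\kappa)^V$; since $\lambda$ is a strong limit, $(\kappa^\mu)^V\leq(2^\kappa)^V<\lambda$, and as $\mathbb{P}$ collapses this cardinal down to $\kappa$ we obtain $(\kappa^\mu)^{V[G]}=\kappa$ for every $\mu<\kappa$.

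For $R(\lambda,\kappa)$ I would take the witnessing stationary set to be $S=S^\lambda_{<\kappa}$, and fix in $V[G]$ an arbitrary family $\langle S_j\mid j<\kappa\rangle$ of stationary subsets of $S$. Let $j:V\to M$ be the ultrapower embedding by a normal measure on $\lambda$, so that $\mathrm{crit}(j)=\lambda$ and ${}^\lambda M\subseteq M$; in particular $M$ computes $\mathbb{P}$ correctly, and $j(\mathbb{P})=\mathrm{Coll}(\kappa,<j(\lambda))^M$ factors in $M$ as $\mathbb{P}\times\mathbb{Q}$, where the tail $\mathbb{Q}=\mathrm{Coll}(\kappa,[\lambda,j(\lambda)))^M$ is $\kappa$-closed. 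Because every condition of $\mathbb{P}$ has size $<\kappa<\lambda=\mathrm{crit}(j)$, it is fixed by $j$, so $j[G]=G$; hence for \emph{any} $H$ that is $\mathbb{Q}$-generic over $V[G]$ the filter $G*H$ is $j(\mathbb{P})$-generic over $M$ and satisfies $j[G]\subseteq G*H$, so by Silver's criterion $j$ lifts to an elementary $\hat{j}:V[G]\to M[G*H]$, definable in $V[G][H]$, with $\mathrm{crit}(\hat{j})=\lambda$ and $\hat{j}(\lambda)=j(\lambda)$. Note that no master condition is required, precisely because $j$ fixes $G$ pointwise.

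The crux is that the sets $S_j$ survive into $M[G*H]$ as stationary sets reflecting at $\lambda$. Since each $S_j\subseteq S^\lambda_{<\kappa}=S^{(\kappa^+)^{V[G]}}_{<\kappa}$ and $\mathbb{Q}$ is $\kappa$-closed, Lemma \ref{aplem}, applied in $V[G]$ with $\mu=\kappa$, guarantees that each $S_j$ remains stationary in $\lambda$ in $V[G][H]$, and hence in the inner model $M[G*H]$ (which has fewer clubs). Moreover $\mathbb{Q}$ collapses $\lambda$ to have cardinality $\kappa$, so $\mathrm{cf}^{M[G*H]}(\lambda)=\kappa$, which is uncountable (recall $\kappa$ is uncountable). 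Since $\lambda=\mathrm{crit}(\hat{j})$ we have $\hat{j}(S_j)\cap\lambda=S_j$ for every $j<\kappa$, and $\hat{j}(\langle S_j\mid j<\kappa\rangle)=\langle\hat{j}(S_j)\mid j<\kappa\rangle$ because $\hat{j}$ fixes $\kappa$ and each $j<\kappa$. Thus in $M[G*H]$ the ordinal $\lambda<\hat{j}(\lambda)$ has uncountable cofinality and every member of the family $\hat{j}(\langle S_j\rangle)$ reflects at $\lambda$; that is, $M[G*H]$ satisfies the $\hat{j}$-image of the assertion ``there is $\alpha<\lambda$ of uncountable cofinality at which all of the $S_j$ reflect.'' By elementarity of $\hat{j}$ this assertion holds in $V[G]$, which is exactly what $R(\lambda,\kappa)$ demands. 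The main obstacle, and the reason for tracking cofinality $<\kappa$ throughout, is the preservation of stationarity of the $S_j$ across the tail forcing $\mathbb{Q}$; this is where Lemma \ref{aplem} is indispensable, and it is also why one builds $H$ by genuinely forcing over $V[G]$ rather than trying to find a generic inside $V[G]$: the final conclusion is a statement about $V[G]$ that elementarity delivers no matter where $\hat{j}$ happens to be defined.
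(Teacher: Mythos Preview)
Your proof is correct and follows essentially the same approach as the paper's: lift the measurable embedding through the collapse, use Lemma~\ref{aplem} to see that each $S_j$ stays stationary after the $\kappa$-closed tail forcing, observe $\hat{j}(S_j)\cap\lambda=S_j$, and pull the simultaneous reflection at $\lambda$ back via elementarity. Your write-up is in fact more explicit than the paper's on several points (the factoring of $j(\mathbb{P})$, why no master condition is needed, and why $\mathrm{cf}(\lambda)=\kappa$ is uncountable in $M[G*H]$), but the underlying argument is the same.
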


\begin{proof}
First note that, since $\mathbb{P}$ is $\kappa$-closed, it doesn't add any new bounded subsets of $\kappa$, so, since $\lambda$ is measurable in $V$, $\kappa^{<\kappa}=\kappa$ in $V[G]$, so $\square^*_\kappa$ holds in $V[G]$. 

 We now show that $R(\lambda, \kappa)$ holds in $V[G]$ as witnessed by $S^\lambda_{<\kappa}$. Let $j:V\rightarrow M$ be an elementary embedding with $M$ transitive and $\mathrm{crit}(j)=\lambda$, and let $H$ be $\mathrm{Coll}(\kappa, <j(\lambda))$-generic over $V$ such that $G\subset H$. We can then lift the embedding to $j:V[G]\rightarrow M[H]$. Let $\{S_\alpha \mid \alpha<\kappa\}$ be a family of stationary subsets of $S^\lambda_{<\kappa}$. Note that $j(\{S_\alpha \mid \alpha<\kappa\})=\{j(S_\alpha) \mid \alpha<\kappa\}$ and, for each $\alpha<\kappa$, $j(S_\alpha)\cap \lambda = S_\alpha$. Since $\mathrm{Coll}(\kappa, [\lambda, j(\lambda))$ is $\kappa$-closed, each $S_\alpha$ remains stationary in $V[H]$ and therefore also in $M[H]$. Thus, in $M[H]$, the sets $\{j(S_\alpha) \mid \alpha<\kappa\}$ reflect simultaneously to a point of uncountable cofinality below $j(\lambda)$, namely $\lambda$, so, by elementarity, in $V[G]$, the sets $\{S_\alpha \mid \alpha<\kappa \}$ reflect simultaneously to a point of uncountable cofinality below $\lambda$. Thus, in $V[G]$, $R(\lambda, \kappa)=R(\kappa^+, \kappa)$ holds. 
\end{proof}

Thus, $\square^*_\kappa$ does not imply the existence of a transitive, normal, uniform $\kappa$-covering matrix for $\kappa^+$. 

We now prove that the converse of Proposition 2.10 does not hold in general by showing that, if $\kappa$ is regular and not strongly inaccessible, one can force to add a transitive, normal, uniform $\kappa$-covering matrix $\mathcal{D}$ for $\kappa^+$ without adding a $\square_{\kappa, <\kappa}$-sequence. The argument is similar to that introduced by Jensen to distinguish between various weak square principles (see \cite{jensen}). Let $\mathbb{Q}$ be the forcing poset consisting of conditions of the form $q=\{D^q(i,\beta) \mid i<\kappa, \  \beta \leq \beta^q\}$ such that:
\begin{itemize}
\item{$\beta^q<\kappa^+$.}
\item{For all $\beta \leq \beta^q$, $\beta = \bigcup_{i<\kappa}D^q(i,\beta)$.}
\item{For all $\beta \leq \beta^q$ and all $i<j<\kappa$, $D^q(i,\beta)\subseteq D^q(j,\beta)$.}
\item{For all $\alpha<\beta \leq \beta^q$ and all $i<\kappa$, if $\alpha \in D^q(i,\beta)$, then $D^q(i,\alpha)\subseteq D^q(i,\beta)$.}
\item{For all $i<\kappa$ and all $\beta \leq \beta^q$, $\mathrm{otp}(D^q(i,\beta))<\kappa^2$.}
\item{For all $\beta \leq \beta^q$, $D^q(i,\beta)$ contains a club in $\beta$ for sufficiently large $i<\kappa$.}
\end{itemize}
For $p,q\in \mathbb{Q}$, $p\leq q$ if and only if $p$ end-extends $q$, i.e. $\beta^p \geq \beta^q$ and $D^p(i,\beta)=D^q(i,\beta)$ for every $i<\kappa$ and $\beta \leq \beta^q$.

\begin{proposition}
$\mathbb{Q}$ is $\kappa$-closed.
\end{proposition}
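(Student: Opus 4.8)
The plan is to verify directly that every decreasing sequence in $\mathbb{Q}$ of length less than $\kappa$ has a lower bound. Since a decreasing sequence of successor length is bounded below by its last element, it suffices to treat a sequence $\langle q_\xi \mid \xi < \delta \rangle$ with $\delta < \kappa$ a limit ordinal. Because $p \leq q$ requires $\beta^p \geq \beta^q$, the ordinals $\beta^{q_\xi}$ are non-decreasing. If they are eventually constant, then, by end-extension, the conditions $q_\xi$ are eventually equal, and this common value is the desired lower bound. So the only real case is when $\langle \beta^{q_\xi} \mid \xi < \delta\rangle$ is cofinal in its supremum $\beta^{*} := \sup_{\xi<\delta}\beta^{q_\xi}$; note that $\beta^{*} < \kappa^+$, being a supremum of fewer than $\kappa$ ordinals below $\kappa^+$, and that $\mathrm{cf}(\beta^{*}) \leq \delta < \kappa$.

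I would build a lower bound $q$ with $\beta^q = \beta^{*}$. For $\beta < \beta^{*}$, I set $D^q(i,\beta) = D^{q_\xi}(i,\beta)$ for any $\xi$ with $\beta \leq \beta^{q_\xi}$; this is well-defined and coherent precisely because the $q_\xi$ end-extend one another. The real work is to define the top level $D^q(i,\beta^{*})$ so that $q$ is again a condition. I would fix a club $E$ in $\beta^{*}$ with $\mathrm{otp}(E) < \kappa$, which is possible since $\mathrm{cf}(\beta^{*}) < \kappa$, and set
\[ D^q(i,\beta^{*}) = E \cup \bigcup_{\alpha \in E} D^q(i,\alpha). \]

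Then I would check the six clauses defining $\mathbb{Q}$ at $\beta^{*}$, as they hold below $\beta^{*}$ by inheritance. Coverage follows from $\bigcup_{i<\kappa} D^q(i,\beta^{*}) = E \cup \bigcup_{\alpha \in E}\alpha = \beta^{*}$, since $E$ is unbounded in $\beta^{*}$. Monotonicity in $i$ is immediate from the corresponding property of the $D^q(i,\alpha)$. For transitivity, if $\gamma \in D^q(i,\beta^{*})$, then either $\gamma \in E$, giving $D^q(i,\gamma)\subseteq D^q(i,\beta^{*})$ by definition, or $\gamma \in D^q(i,\alpha)$ for some $\alpha \in E$, whence $D^q(i,\gamma)\subseteq D^q(i,\alpha)\subseteq D^q(i,\beta^{*})$ using transitivity at $\alpha$. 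The final clause is automatic, since $E \subseteq D^q(i,\beta^{*})$ is a club in $\beta^{*}$ for every $i$.

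The one step requiring genuine input is the bound on order type, and this is where I expect the main obstacle to lie: $D^q(i,\beta^{*})$ is the union of $E$, of order type $<\kappa$, together with fewer than $\kappa$ many sets $D^q(i,\alpha)$ for $\alpha \in E$, each of order type $<\kappa^2$ by the definition of $\mathbb{Q}$. By the ordinal arithmetic lemma proved above, this union still has order type $<\kappa^2$. This is exactly the point at which the uniform $\kappa^2$ bound imposed in the definition of $\mathbb{Q}$ is essential. With all six clauses verified, $q \in \mathbb{Q}$ and $q \leq q_\xi$ for every $\xi < \delta$, which establishes that $\mathbb{Q}$ is $\kappa$-closed.
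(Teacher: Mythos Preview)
Your proof is correct and follows essentially the same approach as the paper: both take the supremum $\beta^{*}$ of the top ordinals, inherit the lower levels, fix a short club $E$ (the paper calls it $C$) in $\beta^{*}$, and define $D^q(i,\beta^{*}) = E \cup \bigcup_{\alpha \in E} D^q(i,\alpha)$, invoking the ordinal arithmetic lemma for the $\kappa^2$ bound. You simply spell out the verification of the six clauses that the paper leaves as ``it easily follows.''
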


\begin{proof}
Suppose $\mu < \kappa$ and $\langle q_\alpha \mid \alpha < \mu \rangle$ is a descending sequence of conditions from $\mathbb{Q}$. We will define $q\in \mathbb{Q}$ such that for all $\alpha < \mu$, $q\leq q_\alpha$. Let $\beta^q=\sup(\{\beta^{q_\alpha} \mid \alpha <\mu \})$. We may assume without loss of generality that the $\beta^{q_\alpha}$ were strictly increasing, so, for all $\alpha < \mu$, $\beta^{q_\alpha}<\beta^q$. For all $\beta < \beta^q$ and $i<\kappa$, let $D^q(i,\beta)=D^{q_\alpha}(i,\beta)$ for some $\alpha < \mu$ such that $\beta \leq \beta^{q_{\alpha}}$. It remains to define $D^q(i,\beta^q)$ for $i<\kappa$. To this end, fix a club $C\subseteq \beta^q$ whose order type is cf($\beta^q$). Note that $|C| < \kappa$. For $i<\kappa$, let $D^q(i,\beta^q)=C\cup \bigcup_{\beta \in C}D^q(i,\beta)$. Since $D^q(i,\beta^q)$ is the union of fewer than $\kappa$-many sets of order type less than $\kappa^2$, the order type of $D^q(i,\beta^q)$ is also less than $\kappa^2$. It easily follows that $q\in \mathbb{Q}$ and, for all $\alpha < \mu$, $q\leq q_\alpha$. 
\end{proof}

We now need the notion of {\em strategic closure}.

\begin{definition}
Let $\mathbb{P}$ be a partial order and let $\beta$ be an ordinal.
\begin{enumerate}
\item {The two-player game $G_\beta(\mathbb{P})$ is defined as follows: Players I and II alternately play entries in $\langle p_\alpha \mid \alpha < \beta \rangle$, a decreasing sequence of conditions in $\mathbb{P}$ with $p_0 = 1_{\mathbb{P}}$. Player I plays at odd stages, and Player II plays at even stages (including all limit stages). If there is an even stage $\alpha < \beta$ at which Player II can not play, then Player I wins. Otherwise, Player II wins.}
\item{$\mathbb{P}$ is said to be {\em $\beta$-strategically closed} if Player II has a winning strategy for the game $G_\beta(\mathbb{P})$.}
\end{enumerate}
\end{definition}

The following is immediate.

\begin{fact}
Let $\mathbb{P}$ be a partial order and let $\kappa$ be a cardinal. If $\mathbb{P}$ is $(\kappa +1)$-strategically closed, then forcing with $\mathbb{P}$ does not add any new $\kappa$-sequences of ordinals.
\end{fact}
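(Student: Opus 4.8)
The plan is to show that every $\kappa$-sequence of ordinals appearing in a generic extension already belongs to $V$, by using Player II's winning strategy to manufacture a single condition that decides the entire sequence. Fix a $\mathbb{P}$-name $\dot f$ and a condition $p$ with $p \Vdash \dot f : \kappa \rightarrow \mathrm{Ord}$. It suffices to prove the following claim: for every $p' \leq p$ there are $q \leq p'$ and a function $g \in V$ with $q \Vdash \dot f = \check g$. Indeed, the claim says that $D = \{q \mid \exists g \in V,\ q \Vdash \dot f = \check g\}$ is dense below $p$, so any generic $G$ containing $p$ meets $D$, whence $\dot f^G \in V$; since every $\kappa$-sequence of ordinals in $V[G]$ is realized by some name and some condition of $G$ forcing it to be such a sequence, no new $\kappa$-sequences are added.

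To prove the claim I would run the game $G_{\kappa+1}(\mathbb{P})$ inside $V$, with Player II following a fixed winning strategy $\sigma$ and Player I playing so as to decide the values of $\dot f$. Fix an increasing enumeration $\langle o_\xi \mid \xi < \kappa \rangle$ of the odd ordinals below $\kappa$; there are exactly $\kappa$ of them. Define the play $\langle p_\alpha \mid \alpha \leq \kappa \rangle$ by recursion: at even $\alpha$ (including limits and $0$) let $p_\alpha$ be the move dictated by $\sigma$, and at each odd stage $o_\xi$ let Player I choose $p_{o_\xi} \leq p_{o_\xi - 1}$ together with an ordinal $\gamma_\xi$ such that $p_{o_\xi} \Vdash \dot f(\xi) = \check\gamma_\xi$. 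This choice is possible because $p_{o_\xi - 1} \leq p$ forces $\dot f(\xi)$ to be an ordinal; and at Player I's very first move (stage $o_0 = 1$) I additionally demand $p_1 \leq p'$, which then propagates downward to all later conditions. Every move of Player I is legal, being below the previous condition, so along this play Player II, following $\sigma$, is never stuck.

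Because $\mathbb{P}$ is $(\kappa+1)$-strategically closed, the play survives to the even limit stage $\kappa$, at which $\sigma$ must supply a condition $p_\kappa$ that is a lower bound of $\langle p_\alpha \mid \alpha < \kappa \rangle$. Set $q = p_\kappa$ and $g = \langle \gamma_\xi \mid \xi < \kappa \rangle$. For each $\xi < \kappa$ the value $\dot f(\xi)$ was decided at stage $o_\xi < \kappa$, and $q \leq p_{o_\xi}$, so $q \Vdash \dot f(\xi) = \check\gamma_\xi$; as this holds for all $\xi$ we get $q \Vdash \dot f = \check g$, while $q \leq p_1 \leq p'$. Since the strategy $\sigma$, Player I's moves, and the ordinals $\gamma_\xi$ are all chosen in $V$, the function $g$ lies in $V$, establishing the claim.

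I do not expect any genuine obstacle here — this is why the fact is flagged as immediate — but two bookkeeping points must be gotten right. First, the game must have length exactly $\kappa+1$ so that stage $\kappa$ is a limit stage at which Player II is forced to move; this is precisely what yields the single lower bound $p_\kappa$ deciding all $\kappa$ coordinates simultaneously. Second, one must verify that the $\kappa$ many odd stages below $\kappa$ suffice to address all $\kappa$ coordinates of $\dot f$, which is exactly what the enumeration $\langle o_\xi \mid \xi < \kappa\rangle$ ensures.
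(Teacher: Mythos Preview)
Your argument is correct and is exactly the standard justification the paper has in mind when it calls the fact ``immediate'' and omits the proof. There is nothing to compare: the paper gives no argument, and your write-up spells out the routine play-the-game-and-let-Player-I-decide-values proof with the right bookkeeping (Player II moves at the limit stage $\kappa$, and the $\kappa$ odd stages suffice to decide all $\kappa$ coordinates).
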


\begin{proposition}
$\mathbb{Q}$ is $(\kappa +1)$-strategically closed.
\end{proposition}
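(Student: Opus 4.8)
The plan is to exhibit an explicit winning strategy for Player II in the game $G_{\kappa+1}(\mathbb{Q})$. Since Player II moves exactly at the even stages, and since every limit ordinal $\lambda<\kappa$ satisfies $\mathrm{cf}(\beta^{p_\lambda})\leq|\lambda|<\kappa$, the whole difficulty is concentrated in surviving the single stage whose associated height may have cofinality $\kappa$, namely stage $\kappa$ itself. Imitating the proof that $\mathbb{Q}$ is $\kappa$-closed, the obvious lower bound at a point $\beta^*$ with $\mathrm{cf}(\beta^*)=\kappa$ would set $D(i,\beta^*)=C\cup\bigcup_{\gamma\in C}D(i,\gamma)$ for a club $C\subseteq\beta^*$ of order type $\kappa$; but this is a union of $\kappa$-many sets of order type $<\kappa^2$, to which the ordinal arithmetic lemma (requiring fewer than $\kappa$-many pieces) does not apply, and in fact the order type can reach $\kappa^2$, destroying normality. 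The entire role of the strategy is to prepare the limit stages below $\kappa$ so that at stage $\kappa$, for each fixed $i$, only fewer than $\kappa$-many of the sets $D(i,\gamma)$ are nonempty.

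Concretely, at a successor even stage I would have Player II extend Player I's last condition by a single ordinal via the successor rule $D(i,\beta+1)=D(i,\beta)\cup\{\beta\}$, so that heights strictly increase and each limit height is a genuine new limit ordinal equal to the supremum of the earlier heights. At a limit stage $\lambda$ with $\mathrm{cf}(\beta^{p_\lambda})<\kappa$, she first forms the (forced) common end-extension of $\langle p_\alpha\mid\alpha<\lambda\rangle$ below $\beta^{p_\lambda}$, and then defines the new top $\beta^{p_\lambda}=\sup_{\alpha<\lambda}\beta^{p_\alpha}$ using a club $E_\lambda\subseteq\beta^{p_\lambda}$ of order type $<\kappa$ containing $\{\beta^{p_\alpha}\mid\alpha<\lambda\}$ (its closure) together with a threshold $h(\lambda)=\mathrm{otp}(\mathrm{lim}(\lambda))<\kappa$, setting $D(i,\beta^{p_\lambda})=\emptyset$ for $i<h(\lambda)$ and $D(i,\beta^{p_\lambda})=E_\lambda\cup\bigcup_{\gamma\in E_\lambda}D(i,\gamma)$ for $i\geq h(\lambda)$. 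I would check this is a legitimate condition extending every $p_\alpha$: coverage holds because any $\delta<\beta^{p_\lambda}$ lies in some $D(i,\beta^{p_\alpha})$, hence (by monotonicity) in some such set with $i\geq h(\lambda)$, and transitivity then carries $\delta$ into $D(i,\beta^{p_\lambda})$ since $\beta^{p_\alpha}\in E_\lambda$; transitivity and monotonicity in $i$ are immediate; uniformity holds because $E_\lambda$ is a club contained in $D(i,\beta^{p_\lambda})$ for all $i\geq h(\lambda)$; and $\mathrm{otp}(D(i,\beta^{p_\lambda}))<\kappa^2$ follows from the ordinal arithmetic lemma, as $D(i,\beta^{p_\lambda})$ is a union of fewer than $\kappa$-many sets of order type $<\kappa^2$. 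The crucial feature recorded here is that along the limit heights, $D(i,\beta^{p_\lambda})=\emptyset$ whenever $h(\lambda)>i$, exactly mirroring the emptying clause of the $\square_{\kappa,<\kappa}$ construction.

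The main step is stage $\kappa$. Put $\beta^*=\sup_{\alpha<\kappa}\beta^{p_\alpha}$; if $\mathrm{cf}(\beta^*)<\kappa$ the routine limit construction applies, so assume $\mathrm{cf}(\beta^*)=\kappa$ and let $C=\{\beta^{p_\lambda}\mid\lambda<\kappa\text{ limit}\}$, which by the strict increase of heights is a club in $\beta^*$ of order type $\kappa$. I would define $D(i,\beta^*)=C\cup\bigcup_{\gamma\in C}D(i,\gamma)$ for every $i<\kappa$. The point of the preparation is that, for fixed $i$, the set $D(i,\gamma)$ is nonempty only for those $\gamma=\beta^{p_\lambda}$ with $h(\lambda)\leq i$; since $h$ enumerates position among limit stages it is nondecreasing with supremum $\kappa$ and has bounded fibers, so these $\gamma$ form a set of size $<\kappa$. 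Hence $\bigcup_{\gamma\in C}D(i,\gamma)$ is a union of fewer than $\kappa$-many sets of order type $<\kappa^2$ and so, by the ordinal arithmetic lemma, has order type $<\kappa^2$; adjoining $C$ (order type $\kappa$) and applying the lemma once more with two pieces gives $\mathrm{otp}(D(i,\beta^*))<\kappa^2$. Coverage, transitivity, and monotonicity are verified as in the limit case, and uniformity is immediate since $C\subseteq D(i,\beta^*)$ is a club in $\beta^*$. Thus $p_\kappa$ is a condition below every $p_\alpha$ and Player II survives.

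I expect the stage-$\kappa$ order-type computation to be the only genuine obstacle, and it is precisely why $\kappa$-closure alone is insufficient: an adversarial descending $\kappa$-sequence could drive $\mathrm{otp}(D^{p_\alpha}(i,\beta^{p_\alpha}))$ cofinal in $\kappa^2$ for some fixed $i$, leaving no legitimate lower bound. Strategic closure is exactly what allows Player II to forestall this, by imposing the emptying thresholds at every limit stage she controls so that the final union collapses to fewer than $\kappa$-many nontrivial pieces. The remaining verifications—the successor and small-cofinality limit moves, and the routine confirmation that each prescribed move lies in $\mathbb{Q}$ and end-extends the earlier conditions—are straightforward bookkeeping modeled on the transitive, normal, uniform constructions established above.
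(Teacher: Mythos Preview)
Your proposal is correct and follows essentially the same approach as the paper: Player II imposes growing ``emptiness thresholds'' at the stages she controls so that, at stage $\kappa$, for each fixed $i<\kappa$ only fewer than $\kappa$-many of the relevant $D(i,\gamma)$ are nonempty, allowing the ordinal arithmetic lemma to bound the order type of the new top. The only differences are bookkeeping: the paper uses the stage index $\gamma$ itself as the threshold and applies it at \emph{every} even stage (successor and limit), taking $C_\kappa$ to be the set of all even-stage tops; you use $h(\lambda)=\mathrm{otp}(\mathrm{lim}(\lambda))$ as the threshold, apply it only at limit stages, and take $C$ to be the set of limit-stage tops. Both choices work for the same reason, and neither buys anything substantive over the other.
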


\begin{proof}
We need to exhibit a winning strategy for Player II in the game $G_{\kappa+1}(\mathbb{Q})$. Suppose $\gamma \leq \kappa$ is an even or limit ordinal and that $\langle q_\alpha \mid \alpha<\gamma \rangle$ has been played. We specify Player II's next move, $q_\gamma$. Let $C_\gamma = \{\beta^{q_\alpha} \mid \alpha <\gamma \mbox{ is an even or limit}$ $\mbox{ordinal} \}$ ($C_\gamma$ is thus the set of the top points of the conditions played by Player II thus far). We assume the following induction hypotheses are satisfied:
\begin{enumerate}
\item{$C_\gamma$ is closed beneath its supremum.}
\item{If $\alpha<\alpha'<\gamma$ and $\alpha,\alpha'$ are even ordinals, then $\beta^{q_\alpha}<\beta^{q_{\alpha'}}$.}
\item{For all even ordinals $\alpha <\gamma$ and all $i<\alpha$, $D^{q_\alpha}(i,\beta^{q_\alpha})=\emptyset$.}
\end{enumerate}
There are three cases.

{\bf Case 1: $\gamma$ is a successor ordinal:} Suppose $\gamma = \gamma'+1$. Let $\beta^{q_\gamma}=\beta^{q_{\gamma'}}+1$. For $i<\kappa$ and $\beta \leq \beta^{q_{\gamma'}}$, let $D^{q_\gamma}(i,\beta)=D^{q_{\gamma'}}(i,\beta)$. For $i<\kappa$, let \[D^{q_\gamma}(i,\beta^{q_\gamma})=
\begin{cases}
\emptyset  & \text{if } i<\gamma \\
\{\beta^{q_{\gamma'}}\}\cup D^{q_{\gamma'}}(i,\beta^{q_{\gamma'}}) & \text{if } i \geq \gamma
\end{cases}.\]

{\bf Case 2: $\gamma<\kappa$ is a limit ordinal:} Let $\beta^{q_\gamma}=\sup(C_\gamma)$ (so $C_\gamma$ is club in $\beta^{q_\gamma}$). For $i<\kappa$ and $\beta < \beta^{q_\gamma}$, let $D^{q_\gamma}(i,\beta)=D^{q_{\alpha}}(i,\beta)$ for some $\alpha <\gamma$ such that $\beta \leq \beta^{q_\alpha}$. For $i<\kappa$, let \[D^{q_\gamma}(i,\beta^{q_\gamma})=
\begin{cases}
\emptyset  & \text{if } i<\gamma \\
C_\gamma \cup \bigcup_{\beta \in C_\gamma}D^{q_\gamma}(i,\beta) & \text{if } i \geq \gamma
\end{cases}.\] For all $i<\kappa$, $D^{q_\gamma}(i,\beta^{q_\gamma})$ is the union of fewer than $\kappa$-many sets of order type less than $\kappa^2$ and thus has order type less than $\kappa^2$.

{\bf Case 3: $\gamma = \kappa$:} Let $\beta^{q_\gamma}=\sup(C_\gamma)$. For $i<\kappa$ and $\beta < \beta^{q_\gamma}$, let $D^{q_\gamma}(i,\beta)=D^{q_{\alpha}}(i,\beta)$ for some $\alpha <\gamma$ such that $\beta \leq \beta^{q_\alpha}$. For $i<\kappa$, let \[D^{q_\gamma}(i,\beta^{q_\gamma})=C_\gamma \cup \bigcup_{\beta \in C_\gamma}D^{q_\gamma}(i,\beta).\] Since, for each $i<\kappa$, $D^{q_\gamma}(i,\beta)=\emptyset$ for all $\beta \in C_\gamma \setminus \beta^{q_i}$, each $D^{q_\gamma}(i,\beta^{q_\gamma})$ is the union of fewer than $\kappa$-many sets of order type less than $\kappa^2$ and thus has order type less than $\kappa^2$.

It is easy to check that in each case the inductive hypotheses are preserved and that this provides a winning strategy for Player II in $G_{\kappa +1}(\mathbb{Q})$. Thus, $\mathbb{Q}$ is $(\kappa +1)$-strategically closed. 
\end{proof}

\begin{proposition}
If $2^\kappa = \kappa^+$, then $\mathbb{Q}$ is a cardinal-preserving forcing poset that adds a transitive, normal, uniform $\kappa$-covering matrix for $\kappa^+$.
\end{proposition}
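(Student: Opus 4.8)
The plan is to let $G$ be $\mathbb{Q}$-generic over $V$ and set $\mathcal{D} = \bigcup_{q \in G} q$, i.e. $D(i,\beta) = D^q(i,\beta)$ for any $q \in G$ with $\beta \leq \beta^q$. I would then prove two things: that $\mathbb{Q}$ preserves all cardinals, and that $\mathcal{D}$ is a transitive, normal, uniform $\kappa$-covering matrix for $\kappa^+$. Well-definedness of $\mathcal{D}$ is immediate, since any two conditions in $G$ are compatible and hence have a common end-extension, so they agree on $D(i,\beta)$ below the smaller of their two top points.

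For cardinal preservation I would split at $\kappa^+$. By Proposition 2.15, $\mathbb{Q}$ is $(\kappa+1)$-strategically closed, so by Fact 2.14 it adds no new $\kappa$-sequences of ordinals, and by Proposition 2.13 no shorter ones either. Consequently no cardinal $\leq \kappa^+$ is collapsed: a collapse of $\kappa^+$ would yield a new surjection from an ordinal $\leq \kappa$ onto $\kappa^+$, i.e. a new sequence of length $\leq \kappa$, which cannot exist. For cardinals $\geq \kappa^{++}$ I would establish the $\kappa^{++}$-chain condition by counting: a condition is determined by $\beta^q < \kappa^+$ together with a function on $\kappa \times (\beta^q+1)$, a set of size $\leq \kappa$, taking values in $[\kappa^+]^{<\kappa}$. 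Since $2^\kappa = \kappa^+$ gives $(\kappa^+)^\kappa = 2^\kappa = \kappa^+$, we have $|[\kappa^+]^{<\kappa}| = \kappa^+$ and hence $|\mathbb{Q}| = \kappa^+$. A poset of size $\kappa^+$ is trivially $\kappa^{++}$-cc, so all cardinals $\geq \kappa^{++}$ are preserved as well.

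To see that $\mathcal{D}$ is as desired, the first point is that $\{\beta^q \mid q \in G\}$ is unbounded in $\kappa^+$, so that $D(i,\beta)$ is defined for every $\beta < \kappa^+$; this reduces to showing that $E_\gamma = \{q \in \mathbb{Q} \mid \beta^q \geq \gamma\}$ is dense for each $\gamma < \kappa^+$ and then invoking genericity. The remaining covering-matrix properties transfer directly from the defining clauses of $\mathbb{Q}$: the union and monotonicity clauses give $\beta = \bigcup_{i<\kappa} D(i,\beta)$ and $D(i,\beta) \subseteq D(j,\beta)$ for $i<j$; the transitivity clause makes $\mathcal{D}$ transitive; the bound $\mathrm{otp}(D^q(i,\beta)) < \kappa^2$ gives $\beta_{\mathcal{D}} \leq \kappa^2 < \kappa^+$, so $\mathcal{D}$ is normal; and the club clause gives uniformity. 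The only covering-matrix axiom not appearing verbatim among the conditions is the coherence clause (3) in the definition of a covering matrix, which I would deduce as follows: given $\beta < \gamma$ and $i < \kappa$, choose $k$ with $\beta \in D(k,\gamma)$ using $\gamma = \bigcup_{k} D(k,\gamma)$, and enlarge $k$ so that $k \geq i$; then transitivity gives $D(i,\beta) \subseteq D(k,\beta) \subseteq D(k,\gamma)$, so $j = k$ witnesses coherence.

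I expect the density of $E_\gamma$ to be the step demanding the most care. Given $q_0$ with $\beta^{q_0} < \gamma$, I would construct a single extension with top point $\gamma$ by recursion on $\beta \in (\beta^{q_0}, \gamma]$: at successors put $D(i,\beta_0+1) = D(i,\beta_0) \cup \{\beta_0\}$, and at a limit $\beta$ fix a club $C \subseteq \beta$ of order type $\mathrm{cf}(\beta)$ and set $D(i,\beta) = C \cup \bigcup_{\alpha \in C} D(i,\alpha)$ for large $i$ and $\emptyset$ for small $i$. The delicate case is $\mathrm{cf}(\beta) = \kappa$, where $C$ has order type $\kappa$ and a naive union has too many pieces to bound; exactly as in the proofs of Propositions 2.13 and 2.15, one arranges along the recursion that $D(i,\alpha) = \emptyset$ for all sufficiently large $\alpha \in C$, so that $D(i,\beta)$ is a union of fewer than $\kappa$ sets each of order type $< \kappa^2$ and Lemma 2.5 keeps $\mathrm{otp}(D(i,\beta)) < \kappa^2$. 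Checking that the object built this way meets every clause defining $\mathbb{Q}$ is precisely the bookkeeping already performed in the closure arguments, so I would model the recursion on those.
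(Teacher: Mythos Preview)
Your proposal is correct and follows essentially the same approach as the paper, which likewise invokes $(\kappa+1)$-strategic closure for preservation of cardinals $\leq \kappa^+$, the bound $|\mathbb{Q}|=\kappa^+$ (from $2^\kappa=\kappa^+$) for the $\kappa^{++}$-chain condition, and defers the density of each $E_\gamma$ to an argument modeled on the strategic-closure proof. One minor slip: since $\mathrm{otp}(D^q(i,\beta))<\kappa^2$ only yields $|D^q(i,\beta)|\leq\kappa$, the values lie in $[\kappa^+]^{\leq\kappa}$ rather than $[\kappa^+]^{<\kappa}$, but your counting still gives $|\mathbb{Q}|=\kappa^+$ under $2^\kappa=\kappa^+$.
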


\begin{proof}
Since $\mathbb{Q}$ is $(\kappa + 1)$-strategically closed, forcing with $\mathbb{Q}$ does not add any new $\kappa$-sequences of ordinals, so all cardinals $\leq \kappa^+$ are preserved. Since $2^\kappa = \kappa^+$, $|\mathbb{Q}| = \kappa^+$, so $\mathbb{Q}$ has the $\kappa^{++}$-chain condition and hence preserves all cardinals $\geq \kappa^{++}$. Finally, a proof similar to that of the previous proposition yields the fact that for all $\alpha < \kappa^+$, the set $E_\alpha = \{q \mid \beta^q \geq \alpha \}$ is dense in $\mathbb{Q}$. Thus, if $G$ is $\mathbb{Q}$-generic over $V$, then $\bigcup G$ is a transitive, normal, uniform $\kappa$-covering matrix for $\kappa^+$. 
\end{proof}

Given a $\kappa$-covering matrix $\mathcal{D}$ for $\kappa^+$, we define a forcing notion $\mathbb{T}_\mathcal{D}$ whose purpose is to add a club in $\kappa^+$ of order type $\kappa$ which interacts nicely with $\mathcal{D}$. It plays a similar role in our argument as the forcing to thread a square sequence plays in \cite{cfm} and \cite{jensen}. Elements of $\mathbb{T}_\mathcal{D}$ are sets $t$ such that:
\begin{enumerate}
\item{$t$ is a closed, bounded subset of $\kappa^+$.}
\item{$|t|<\kappa$.}
\item{If $t$ is enumerated in increasing order as $\langle \tau_\alpha \mid \alpha \leq \gamma_t <\kappa \rangle$, then for all $\alpha \leq \gamma_t$ and all $i<\alpha$, $D(i,\tau_\alpha)=\emptyset$.}
\end{enumerate}

If $t,t' \in \mathbb{T}_\mathcal{D}$, then $t'\leq t$ if and only if $t'$ end-extends $t$, i.e. $\gamma_{t'} \geq \gamma_t$ and, for all $\alpha \leq \gamma_t$, $\tau'_\alpha = \tau_\alpha$.

In general, $\mathbb{T}_\mathcal{D}$ may be very poorly behaved. For example, the set it adds may not be cofinal in $\kappa^+$ and, even if it is, its order type might be less than $\kappa$. However, if $\mathcal{D}$ has been added by $\mathbb{Q}$ immediately prior to forcing with $\mathbb{T}_{\mathcal{D}}$, then it has some nice properties.

\begin{proposition}
Let $G$ be $\mathbb{Q}$-generic over $V$, and let $\mathcal{D}=\bigcup G$. Then, in $V[G]$, for all $\alpha<\kappa^+$, $E_\alpha = \{t \mid \alpha \leq \tau_{\gamma_t} \}$ is dense in $\mathbb{T}_\mathcal{D}$. 
\end{proposition}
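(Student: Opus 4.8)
The plan is to reduce the density of $E_\alpha$ in $\mathbb{T}_\mathcal{D}$ to the following combinatorial feature of the generic matrix, which I will call the \emph{Claim}: in $V[G]$, for every $\delta<\kappa$ and every $\alpha_0<\kappa^+$ there is $\beta\geq\alpha_0$ with $D(i,\beta)=\emptyset$ for all $i<\delta$. Granting the Claim, fix $t\in\mathbb{T}_\mathcal{D}$ and $\alpha<\kappa^+$. Set $\delta=\gamma_t+1<\kappa$ and apply the Claim with $\alpha_0=\max(\alpha,\tau_{\gamma_t}+1)$ to obtain $\beta\geq\alpha_0$ with $D(i,\beta)=\emptyset$ for all $i<\delta$. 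Since $\beta>\tau_{\gamma_t}=\sup t$, the set $t'=t\cup\{\beta\}$ is a closed, bounded subset of $\kappa^+$ of size $<\kappa$ that properly end-extends $t$, and its increasing enumeration is $\langle\tau_0,\dots,\tau_{\gamma_t},\beta\rangle$, so $\gamma_{t'}=\gamma_t+1$. The only instance of requirement (3) in the definition of $\mathbb{T}_\mathcal{D}$ not already inherited from $t$ is that $D(i,\beta)=\emptyset$ for all $i<\gamma_t+1=\delta$, which holds by the choice of $\beta$. Thus $t'\in\mathbb{T}_\mathcal{D}$, $t'\leq t$, and $\tau'_{\gamma_{t'}}=\beta\geq\alpha$, so $t'\in E_\alpha$, as desired.

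To prove the Claim I pass back to $\mathbb{Q}$ and argue by genericity. Since $\mathcal{D}=\bigcup G$ and $\mathbb{Q}$ is ordered by end-extension, $D(i,\beta)=D^q(i,\beta)$ whenever $q\in G$ and $\beta\leq\beta^q$; hence the Claim follows once I show that for each $\delta<\kappa$ and $\alpha_0<\kappa^+$ the set $D_{\delta,\alpha_0}=\{q\in\mathbb{Q}\mid \exists\beta\in[\alpha_0,\beta^q]\ \forall i<\delta\ D^q(i,\beta)=\emptyset\}$ is dense in $\mathbb{Q}$.

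For density, fix $q\in\mathbb{Q}$. The genuinely difficult part of reaching high in $\mathbb{Q}$ — defining columns at ordinals of cofinality $\kappa$ while keeping their order types below $\kappa^2$ — is already packaged in the density of $E_{\alpha_0}=\{q\mid\beta^q\geq\alpha_0\}$, established in the proof of Proposition 2.16. So I first use that density to obtain $q_0\leq q$ with $\beta_0:=\beta^{q_0}\geq\alpha_0$. It then remains to install, just above $\beta_0$, a top whose initial column is empty below $\delta$. I do this by extending $q_0$ through $\delta$ further stages, adjoining the ordinals $\beta_0+j$ for $1\leq j\leq\delta$ and mimicking Cases 1 and 2 of the winning strategy from the proof of Proposition 2.15: at each new top $\beta_0+j$ I set $D^{q'}(i,\beta_0+j)=\emptyset$ for $i<j$, and for $i\geq j$ I generate $D^{q'}(i,\beta_0+j)$ from the already–defined columns together with a club of order type $<\kappa$ in $\beta_0+j$ (the set $\{\beta_0+j'\mid j'<j\}$ when $j$ is a limit, or the singleton consisting of the predecessor of $\beta_0+j$ when $j$ is a successor). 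Crucially, every ordinal adjoined lies in $[\beta_0,\beta_0+\delta]$ and so has an offset $<\kappa$, whence cofinality $<\kappa$; no ordinal of cofinality $\kappa$ is ever created, and each new column is a union of fewer than $\kappa$-many sets of order type $<\kappa^2$, so Lemma 2.5 gives order type $<\kappa^2$. The resulting condition $q'$ has top $\beta=\beta_0+\delta\geq\alpha_0$ with $D^{q'}(i,\beta)=\emptyset$ for all $i<\delta$, so $q'\in D_{\delta,\alpha_0}$ and $q'\leq q$.

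The step requiring the most care is the verification that $q'\in\mathbb{Q}$ — in particular that coverage $\beta_0+j=\bigcup_{i<\kappa}D^{q'}(i,\beta_0+j)$, $\subseteq$-monotonicity in $i$, transitivity, uniformity, and the bound $\mathrm{otp}<\kappa^2$ all survive the adjunction. These are routine because the emptiness of the initial columns is harmless: monotonicity and transitivity are vacuous where a column is empty; coverage and uniformity are supplied by the tail $i\geq\delta$, using that the club of order type $<\kappa$ sits inside $D^{q'}(i,\beta)$ and that $\bigcup_{\delta\leq i<\kappa}D^{q'}(i,\beta_0)=\beta_0$ by monotonicity together with the coverage already holding in $q_0$; and the order-type bound is exactly Lemma 2.5. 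The one point genuinely needing attention is transitivity across the join ordinal $\beta_0$, which holds because each new column pulls in, via its recursive defining clause, the full $q_0$-columns $D^{q_0}(i,\alpha)$ for the relevant $\alpha\leq\beta_0$.
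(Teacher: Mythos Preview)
Your proof is correct and takes the same approach as the paper, which simply asserts (without detail) that the sets $E_{j,\alpha}=\{q\mid\alpha\leq\beta^q\text{ and }D^q(i,\beta^q)=\emptyset\text{ for all }i<j\}$ are dense in $\mathbb{Q}$. Your $\delta$-stage construction for this density is more elaborate than necessary --- adding the single ordinal $\beta_0+1$ with $D^{q'}(i,\beta_0+1)=\emptyset$ for $i<\delta$ and $D^{q'}(i,\beta_0+1)=\{\beta_0\}\cup D^{q_0}(i,\beta_0)$ for $i\geq\delta$ already suffices --- but it is correct.
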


\begin{proof}
This follows from the fact that, in $V$, for every $\alpha < \kappa^+$ and every $j<\kappa$, the set $E_{j, \alpha} = \{q \mid \alpha \leq \beta^q \mbox{ and for every } i<j, D(i,\beta^q)=\emptyset \}$ is easily seen to be dense in $\mathbb{Q}$. 
\end{proof}

\begin{proposition}
If $\mathcal{D}$ is the covering matrix added by $\mathbb{Q}$, then $\mathbb{Q}*\dot{\mathbb{T}}_\mathcal{D}$ has a $\kappa$-closed dense subset.
\end{proposition}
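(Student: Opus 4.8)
The plan is to exhibit an explicit $\kappa$-closed dense subset $\mathbb{D}$ of $\mathbb{Q}*\dot{\mathbb{T}}_\mathcal{D}$. The point is that, while $\mathbb{T}_\mathcal{D}$ on its own may fail to be $\kappa$-closed --- a descending sequence of threads can reach a limit $\delta$ at which the generic matrix refuses to make $D(i,\delta)$ empty for the relevant indices $i$ --- working in the two-step iteration lets us build the thread and the matrix \emph{simultaneously}, choosing the values of $\mathcal{D}$ at the new top point precisely so that it is a legal point to append to the thread. Accordingly, I let $\mathbb{D}$ consist of those $(q,\check t)$ for which $t\in V$ is an actual closed, bounded subset of $\kappa^+$ with $q\Vdash \dot t = \check t$, the top point of $t$ equals $\beta^q$, and $t$ satisfies the emptiness requirement (condition (3) in the definition of $\mathbb{T}_\mathcal{D}$) with respect to $q$, so that $(q,\check t)$ really is a condition.

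First I would check that $\mathbb{D}$ is dense. Given $(q,\dot t)$, since $\mathbb{Q}$ is $\kappa$-closed and it is forced that $|\dot t|<\kappa$, I can decide $\dot t$ as a ground-model set, i.e. find $q_0\leq q$ and $t_0\in V$ with $q_0\Vdash \dot t = \check{t_0}$, and I may assume $\sup(t_0)\leq \beta^{q_0}$. Letting $j=\mathrm{otp}(t_0)$, I then appeal to the density (established just above) of the sets $E_{j,\alpha}=\{q\mid \alpha\leq \beta^q \text{ and } D(i,\beta^q)=\emptyset \text{ for all } i<j\}$ to find $q_1\leq q_0$ with $\beta^{q_1}>\sup(t_0)$ and $D^{q_1}(i,\beta^{q_1})=\emptyset$ for every $i<j$. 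Then $t_1=t_0\cup\{\beta^{q_1}\}$ is a legal thread whose top point is $\beta^{q_1}$, so $(q_1,\check{t_1})\in\mathbb{D}$ and $(q_1,\check{t_1})\leq (q,\dot t)$.

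The heart of the argument is showing that $\mathbb{D}$ is $\kappa$-closed. Given a descending sequence $\langle (q_\alpha,\check t_\alpha)\mid \alpha<\mu\rangle$ in $\mathbb{D}$ with $\mu<\kappa$ a limit, which I may take strictly descending, the top points $\beta^{q_\alpha}$ strictly increase; set $\delta=\sup_\alpha \beta^{q_\alpha}$ and $\bar t = \bigcup_{\alpha<\mu} t_\alpha$. Because the $t_\alpha$ end-extend one another and each is closed, $\bar t$ is a \emph{club} in $\delta$, and since it is a union of $\mu<\kappa$ sets each of order type $<\kappa$, its order type $\gamma$ is $<\kappa$. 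I now define a condition $q$ with $\beta^q=\delta$: for $\beta<\delta$ put $D^q(i,\beta)=D^{q_\alpha}(i,\beta)$ for any $\alpha$ with $\beta\leq\beta^{q_\alpha}$ (well defined by coherence), and at the new top point set
\[
D^q(i,\delta)=
\begin{cases}
\emptyset & i<\gamma \\
\bar t \cup \bigcup_{\beta\in\bar t} D^q(i,\beta) & i\geq\gamma.
\end{cases}
\]
Setting $t_\infty=\bar t\cup\{\delta\}$, the pair $(q,\check{t}_\infty)$ will be the desired lower bound.

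It remains to verify that $q\in\mathbb{Q}$ and that $(q,\check{t}_\infty)\in\mathbb{D}$; this is where the design of $D^q(i,\delta)$ pays off. Coverage, monotonicity, and transitivity at $\delta$ follow routinely from the corresponding facts at the $\beta\in\bar t$ together with the cofinality of $\bar t$ in $\delta$. The order-type bound $\mathrm{otp}(D^q(i,\delta))<\kappa^2$ follows from the ordinal-arithmetic lemma, since $D^q(i,\delta)$ is a union of $|\bar t|+1<\kappa$ sets of order type $<\kappa^2$. Uniformity at $\delta$ holds because $\bar t\subseteq D^q(i,\delta)$ for all $i\geq\gamma$ and $\bar t$ is a club in $\delta$. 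Finally, in $t_\infty$ the point $\delta$ occupies position $\gamma=\mathrm{otp}(\bar t)$, and we arranged exactly that $D^q(i,\delta)=\emptyset$ for all $i<\gamma$; together with the inherited emptiness at lower positions this shows $t_\infty$ is a legal thread with top point $\delta=\beta^q$, so $(q,\check{t}_\infty)\in\mathbb{D}$ lies below every $(q_\alpha,\check t_\alpha)$. The main obstacle, and the crux of the whole construction, is precisely this last compatibility: making the single club $\bar t$ do double duty as the club witnessing uniformity at $\delta$ and as the set whose order type $\gamma$ is the threshold below which $D^q(i,\delta)$ must vanish so that $\delta$ may legally be appended to the thread.
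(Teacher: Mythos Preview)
Your proof is correct and follows essentially the same approach as the paper: both define the dense subset to consist of pairs $(q,\dot t)$ where $q$ decides $\dot t$ and the top of $t$ is $\beta^q$, and both construct the lower bound at a limit by using the union $\bar t$ of the threads as the club at the new top $\delta$, setting $D^q(i,\delta)=\emptyset$ for $i$ below $\mathrm{otp}(\bar t)$ and $D^q(i,\delta)=\bar t\cup\bigcup_{\beta\in\bar t}D^q(i,\beta)$ above. The only cosmetic difference is that for density you invoke the already-established density of $E_{j,\alpha}$, whereas the paper simply passes to $\beta^{q_1}+1$ and writes down the values there directly; both are equally valid.
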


\begin{proof}
Let $\mathbb{S} = \{(q,\dot{t}) \mid q \mbox{ decides the value of } \dot{t} \mbox{ and } q \Vdash ``\dot{\tau}_{\gamma_t}=\beta^q" \}$. We first show that $\mathbb{S}$ is dense in $\mathbb{Q}*\dot{\mathbb{T}}_\mathcal{D}$. To this end, let $(q_0, \dot{t}_0) \in \mathbb{Q}*\dot{\mathbb{T}}_\mathcal{D}$. Find $q_1 \leq q_0$ such that $q_1$ decides the value of $\dot{t}$ to be some $\langle \tau_\alpha \mid \alpha \leq \gamma_t <\kappa \rangle$ (this is possible, since $\mathbb{Q}$ is $(\kappa + 1)$-strategically closed and hence doesn't add any new $\kappa$-sequences of ordinals). Without loss of generality, $\beta^{q_1} > \tau_{\gamma_t}$. Now form $q_2 \leq q_1$ by setting $\beta^{q_2}=\beta^{q_1}+1$ and \[D^{q_2}(i,\beta^{q_2}) = 
\begin{cases}
\emptyset  & \text{if } i \leq \gamma_t \\
\{\beta^{q_1}\} \cup D^{q_1}(i,\beta^{q_1}) & \text{if } i > \gamma_t
\end{cases}.\] Finally, let $\dot{t}_1$ be such that $q_2 \Vdash \dot{t}_1=\dot{t}_0 \cup \{\beta^{q_2} \}$. Then $(q_2, \dot{t}_1)\leq (q_0, \dot{t}_0)$ and $(q_2, \dot{t}_1) \in \mathbb{S}$.

Next, we show that $\mathbb{S}$ is $\kappa$-closed. Let $\langle (q_\alpha, \dot{t}_\alpha) \mid \alpha<\nu \rangle$ be a decreasing sequence of conditions from $\mathbb{S}$ with $\nu<\kappa$ a limit ordinal. We will find a lower bound $(q, \dot{t}) \in \mathbb{S}$. Let $\beta^q = \sup(\{\beta^{q_\alpha} \mid \alpha<\nu \})$ and let $X=\{\beta \mid \mbox{for some } \alpha<\nu, q_\alpha \Vdash ``\beta \in \dot{t}_\alpha" \}$. Note that by our definition of $\mathbb{S}$, $X$ is club in $\beta^q$. Let $\gamma = \mathrm{otp}(X)$. Define $q$ as a lower bound to the $q_\alpha$'s by letting \[D^q(i,\beta^q) = 
\begin{cases}
\emptyset  & \text{if } i \leq \gamma \\
X \cup \bigcup_{\beta \in X}D^q(i,\beta) & \text{if } i > \gamma
\end{cases}.\] Let $\dot{t}$ be a name forced by $q$ to be equal to $X\cup \{\beta^q \}$. Then $(q,\dot{t}) \in \mathbb{S}$ and, for all $\alpha <\nu$, $(q,\dot{t})\leq (q_\alpha, \dot{t}_\alpha)$. 
\end{proof}

Thus, if $\mathcal{D}$ has been added by $\mathbb{Q}$, then $\mathbb{T}_\mathcal{D}$ does in fact add a club in $\kappa^+$ and, since $\mathbb{Q}*\dot{\mathbb{T}}_\mathcal{D}$ has a $\kappa$-closed dense subset and therefore doesn't add any new sets of ordinals of order type less than $\kappa$, the club added by $\mathbb{T}_\mathcal{D}$ has order type $\kappa$.

We will need the following fact, due to Magidor (see \cite{magidor}):

\begin{fact}
\label{lift}
Let $\kappa$ be a regular cardinal, and let $\kappa<\lambda<\mu$. Suppose that, in $V^{\mathrm{Coll}(\kappa, <\lambda)}$, $\mathbb{P}$ is a $\kappa$-closed partial order and $|\mathbb{P}|<\mu$. Let $i$ be the natural complete embedding of $\mathrm{Coll}(\kappa, <\lambda)$ into $\mathrm{Coll}(\kappa, <\mu)$ (namely, the identity embedding). Then $i$ can be extended to a complete embedding $j$ of $\mathrm{Coll}(\kappa, <\lambda)*\mathbb{P}$ into $\mathrm{Coll}(\kappa, <\mu)$ so that the quotient forcing $\mathrm{Coll}(\kappa, <\mu)/j[\mathrm{Coll}(\kappa, <\lambda)*\mathbb{P}]$ is $\kappa$-closed.
\end{fact}

\begin{theorem}
Let $\kappa$ be a regular cardinal that is not strongly inaccessible, and let $\lambda > \kappa$ be a measurable cardinal. Let $G$ be $\mathrm{Coll}(\kappa, <\lambda)$-generic over $V$ and, in $V[G]$, let $H$ be $\mathbb{Q}$-generic over $V[G]$. Then, in $V[G*H]$, there is a transitive, normal, uniform $\kappa$-covering matrix for $\kappa^+$, but $\square_{\kappa, <\kappa}$ fails.
\end{theorem}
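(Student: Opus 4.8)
The plan is to treat the two assertions separately. For the existence of the matrix, the key observation is that, since $\lambda$ is inaccessible, $\mathrm{Coll}(\kappa,<\lambda)$ forces $2^\kappa=\lambda=\kappa^+$ (every subset of $\kappa$ appears at a proper initial stage of the collapse, of which there are $\lambda$). Thus in $V[G]$ the hypothesis of the proposition asserting that $\mathbb{Q}$ adds a transitive, normal, uniform $\kappa$-covering matrix is satisfied, and that proposition tells us both that $\mathbb{Q}$ is cardinal-preserving and that $\mathcal{D}=\bigcup H$ is such a matrix for $\kappa^+$ in $V[G*H]$. This settles the first half, and the remaining work is to show that no $\square_{\kappa,<\kappa}$-sequence exists in $V[G*H]$; this is where the measurability of $\lambda$ enters.

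So I would suppose toward a contradiction that $\vec{\mathcal{C}}=\langle \mathcal{C}_\alpha \mid \alpha\in\mathrm{lim}(\lambda)\rangle$ is a $\square_{\kappa,<\kappa}$-sequence in $V[G*H]$, where $\lambda=(\kappa^+)^{V[G*H]}$, and fix an elementary embedding $j:V\to M$ with $M$ transitive and $\mathrm{crit}(j)=\lambda$. The aim is to lift $j$ to $\hat{j}:V[G*H]\to M[\tilde{G}*\tilde{H}]$, where $\tilde{G}*\tilde{H}$ is $\mathrm{Coll}(\kappa,<j(\lambda))*j(\mathbb{Q})$-generic over $M$. Since the proposition on $\mathbb{Q}*\dot{\mathbb{T}}_\mathcal{D}$ shows this poset has a $\kappa$-closed dense subset of size $\lambda<j(\lambda)$, Fact \ref{lift} absorbs $\mathrm{Coll}(\kappa,<\lambda)*\mathbb{Q}*\dot{\mathbb{T}}_\mathcal{D}$ as a complete suborder of $\mathrm{Coll}(\kappa,<j(\lambda))$ with $\kappa$-closed quotient. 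Fixing a $\mathrm{Coll}(\kappa,<j(\lambda))$-generic $K$ over $V$ and reading off $G*H*T$ from it, I can lift $j$ through the collapse to obtain $j:V[G]\to M[\tilde{G}]$ with $\tilde{G}=K$.

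It remains to lift through $\mathbb{Q}$, which is exactly the role of $\mathbb{T}_\mathcal{D}$. Let $E$ be the club of order type $\kappa$ added by the $\mathbb{T}_\mathcal{D}$-generic $T$. Using $E$ I build a master condition $q^*\in j(\mathbb{Q})$ with $\beta^{q^*}=\lambda$, agreeing with $\mathcal{D}$ below $\lambda$, and with $D^{q^*}(i,\lambda)=E\cup\bigcup_{\tau\in E}D(i,\tau)$ for large $i$. Here the defining clause of $\mathbb{T}_\mathcal{D}$—that $D(i,\tau_\beta)=\emptyset$ whenever $i$ is below the index of $\tau_\beta$ in $E$—is precisely what forces $\mathrm{otp}(D^{q^*}(i,\lambda))<\kappa^2$, so that $q^*$ is a legitimate condition. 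Because $q^*$ end-extends $j(q)=q$ for every $q\in H$, any $M[\tilde{G}]$-generic $\tilde{H}$ below $q^*$ satisfies $\hat{j}[H]\subseteq\tilde{H}$ and yields the lift. Now $\hat{j}(\vec{\mathcal{C}})$ is a $\square_{\kappa,<\kappa}$-sequence on $j(\lambda)$ agreeing with $\vec{\mathcal{C}}$ below $\lambda$, so each $C\in\hat{j}(\vec{\mathcal{C}})_\lambda$ is a club in $\lambda$ of order type $\kappa$ that threads $\vec{\mathcal{C}}$, in the sense that $C\cap\bar\alpha\in\mathcal{C}_{\bar\alpha}$ for every $\bar\alpha\in\lim(C)$.

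The final step is to derive a contradiction from such a thread. Any thread, being cofinal in $\lambda$ of order type $\kappa$, witnesses $\mathrm{cf}(\lambda)=\kappa$ and so cannot itself lie in $V[G*H]$, where $\lambda=\kappa^+$ is regular; the point must therefore be that the mere existence of a thread in $M[\tilde{G}*\tilde{H}]$ is already incompatible with $\vec{\mathcal{C}}$ being a $\square_{\kappa,<\kappa}$-sequence back in $V[G*H]$. The intended mechanism exploits two facts: the quotient taking $V[G*H]$ to $M[\tilde{G}*\tilde{H}]$ is $<\kappa$-distributive (its collapse part is $\kappa$-closed and $j(\mathbb{Q})$ is $(\kappa+1)$-strategically closed), so every initial segment of $C$ of order type $<\kappa$ already belongs to $V[G*H]$; and $\hat{j}(\vec{\mathcal{C}})_\lambda$ contains fewer than $\kappa$ threads. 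The combination should let one reconstruct a thread through $\vec{\mathcal{C}}$ by means of a forcing of $V[G*H]$ preserving $\kappa^+$, which is impossible. Making this descent precise is the step I expect to be the main obstacle, and it is exactly here that the bound $<\kappa$ is indispensable: with $<\kappa$-many clubs per level the tree of initial segments of the designated threads has levels of size $<\kappa$ and the reconstruction can hope to succeed, whereas with $\le\kappa$-many (which would only yield $\square^*_\kappa$) the levels would have size $\kappa$ and the argument would break down—consistent with the fact that $\square^*_\kappa$ survives this model.
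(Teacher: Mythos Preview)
Your setup through the lifting of $j$ is correct and matches the paper: absorb $\mathrm{Coll}(\kappa,<\lambda)*\mathbb{Q}*\dot{\mathbb{T}}_{\mathcal{D}}$ into $\mathrm{Coll}(\kappa,<j(\lambda))$ via Fact~\ref{lift}, use the generic thread $E$ to build a master condition $q^*\in j(\mathbb{Q})$, and extract a thread $F\in\mathcal{C}_\lambda$ through $\vec{\mathcal{C}}$ in the final extension. The gap is entirely in your last paragraph, and you have in fact talked yourself out of the right argument.

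You observe that a thread of order type $\kappa$ cannot lie in $V[G*H]$, and conclude that one must therefore find some \emph{other} route to a contradiction. But the paper's strategy is exactly to prove that $F\in V[G*H]$; this is the desired contradiction (with $\vec{\mathcal{C}}$ being a square sequence there, or equivalently with $\lambda$ being regular there). The point is that we are already inside a proof by contradiction, having assumed $\vec{\mathcal{C}}$ exists. The work is to show: if $F\notin V[G*H]$, then some $\mathcal{C}_{\alpha}$ has $\geq\kappa$ members, which is also a contradiction. Either way $\vec{\mathcal{C}}$ cannot exist.

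Concretely, the paper works in $V[G]$ with the iteration $\mathbb{Q}*\mathbb{T}*\mathbb{R}*j(\mathbb{Q})$ and a name $\dot{f}$ for $F$ forced not to lie in $V[G*G_{\mathbb{Q}}]$. This is where the hypothesis that $\kappa$ is not strongly inaccessible is used (you never invoke it): let $\gamma<\kappa$ be least with $2^\gamma\geq\kappa$. A standard splitting subclaim shows that below any condition one can extend the $\mathbb{Q}$-coordinate and find two extensions in the tail $\mathbb{T}*\mathbb{R}*j(\mathbb{Q})$ deciding some $``\check\alpha\in\dot f"$ oppositely. Iterating, one builds a single decreasing $\mathbb{Q}$-sequence $\langle q_i\mid i\leq\gamma\rangle$ together with a binary tree $\langle(\dot t_s,\dot r_s,\dot p_s)\mid s\in{}^{\leq\gamma}2\rangle$ of tail conditions, using the $\kappa$-closure of $\mathbb{Q}$, of the dense set $\mathbb{S}\subseteq\mathbb{Q}*\mathbb{T}$, of $\mathbb{R}$, and of $j(\mathbb{Q})$ at limit stages (and $2^{<\gamma}<\kappa$ to keep the bookkeeping bounded). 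One simultaneously arranges that each branch forces $\dot f$ to meet the interval $[\alpha_i,\alpha_{i+1})$, so that at stage $\gamma$ every branch forces $\alpha_\gamma:=\sup_i\beta^{q_i}$ to be a limit point of $\dot f$. Extending $q_\gamma$ to decide $\mathcal{C}_{\alpha_\gamma}$, the $2^\gamma\geq\kappa$ branches then force pairwise distinct values for $\dot f\cap\alpha_\gamma\in\mathcal{C}_{\alpha_\gamma}$, contradicting $|\mathcal{C}_{\alpha_\gamma}|<\kappa$.

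Your proposed alternative---``initial segments of $C$ lie in $V[G*H]$ by distributivity, and the tree of such segments has levels of size $<\kappa$, so reconstruct a branch''---does not work as stated: a $\kappa$-tree with levels of size $<\kappa$ can perfectly well acquire a new cofinal branch in a $<\kappa$-distributive extension without having one in the ground model, so distributivity alone does not pull the thread down. The branching argument above is precisely the extra ingredient, and it is also where the $<\kappa$ bound on $|\mathcal{C}_\alpha|$ and the non-inaccessibility of $\kappa$ are genuinely consumed.
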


\begin{proof}
We have already shown that, in $V[G*H]$, there is a transitive, normal, uniform $\kappa$-covering matrix for $\kappa^+$, $\mathcal{D}$. In $V[G*H]$, let $\mathbb{T}=\mathbb{T}_\mathcal{D}$. Fix an elementary embedding $j:V\rightarrow M$ with critical point $\lambda$. Then $j(\mathbb{P})=\mathrm{Coll}(\kappa, <j(\lambda))$, and $j \restriction \mathbb{P}$ is the identity map. $V[G] \models |\mathbb{Q}*\mathbb{T}|=\lambda$, and $|\mathbb{Q}*\mathbb{T}|$ has a $\kappa$-closed dense subset, so we can extend $j \restriction \mathbb{P}$ to a complete embedding of $\mathrm{Coll}(\kappa, <\lambda)*\mathbb{Q}*\mathbb{T}$ into $\mathrm{Coll}(\kappa, <j(\lambda))$ so that the quotient forcing is $\kappa$-closed. Then, letting $I$ be $\mathbb{T}$-generic over $V[G*H]$ and $J$ be $\mathbb{R}=\mathrm{Coll}(\kappa, <j(\lambda))/G*H*I$-generic over $V[G*H*I]$, we can further extend $j$ to an elementary embedding $j:V[G] \rightarrow M[G*H*I*J]$.

We would now like to extend $j$ further still to an embedding with domain $V[G*H]$. To do this, consider the partial order $j(\mathbb{Q})$. In $M[G*H*I*J]$, $j(\mathbb{Q})$ is the partial order to add a transitive, normal, uniform $\kappa$-covering matrix for $j(\lambda)$. Let $E=\bigcup I$. $E$ is a club in $\lambda$ of order type $\kappa$, and if $\beta \in E$ is such that $\mathrm{otp}(E\cap \beta) = \gamma$, then for every $i<\gamma$, $D(i,\beta)=\emptyset$.  We use $E$ to define a ``master condition" $q^* \in j(\mathbb{Q})$ as follows.  Let $\beta^{q^*}=\lambda$. For $\beta<\lambda$ and $i<\kappa$, let $D^{q^*}(i, \beta) = D(i,\beta)$ and \[D^{q*}(i,\lambda)=E\cup \bigcup_{\beta \in E} D(i,\beta).\] $q^* \in j(\mathbb{Q})$ and, if $q\in H$, $j(q)=q\leq q^*$. Let $K$ be $j(\mathbb{Q})$-generic over $V[G*H*I*J]$ such that $q^* \in K$. Since $j[H] \subseteq K$, we can extend $j$ to an elementary embedding $j:V[G*H] \rightarrow M[G*H*I*J*K]$.

Suppose for sake of contradiction that $\overrightarrow{\mathcal{C}} = \langle \mathcal{C}_\alpha \mid \alpha < \lambda \rangle$ is a $\square_{\kappa, <\kappa}$-sequence in $V[G*H]$. For $\alpha<\lambda$, $j(\mathcal{C}_\alpha) = \mathcal{C}_\alpha$, and $j(\overrightarrow{\mathcal{C}})=\langle \mathcal{C}_\alpha \mid \alpha <j(\lambda) \rangle$ is a $\square_{\kappa, <\kappa}$-sequence in $M[G*H*I*J*K]$. Fix $F\in \mathcal{C}_\lambda$. $F$ is a thread through $\overrightarrow{\mathcal{C}}$ (i.e., $F$ is a club in $\lambda$ and, for every $\alpha \in F'$, $F\cap \alpha \in \mathcal{C}_\alpha$) and $F \in V[G*H*I*J*K]$.
\begin{claim}
$F\in V[G*H]$.
\end{claim}

Suppose not. Work in $V[G]$. There is a $\mathbb{Q}*\mathbb{T}*\mathbb{R}*j(\mathbb{Q})$-name $\dot{f}$ such that $\dot{f}^{H*I*J*K}=F$ and $\Vdash_{\mathbb{Q}*\mathbb{T}*\mathbb{R}*j(\mathbb{Q})} \dot{f} \not\in V[G*G_\mathbb{Q}]$.

\begin{subclaim}
For all $(q, \dot{t}, \dot{r}, \dot{p}) \in \mathbb{Q}*\mathbb{T}*\mathbb{R}*j(\mathbb{Q})$, there are $q' \leq q$, $(\dot{t}_0, \dot{r}_0, \dot{p}_0)$, $(\dot{t}_1, \dot{r}_1, \dot{p}_0)$, and $\alpha <\kappa^+$ such that $(q', \dot{t}_0, \dot{r}_0, \dot{p}_0), (q', \dot{t}_1, \dot{r}_1, \dot{p}_1) \leq (q,\dot{t}, \dot{r}, \dot{p})$ and such that $(q', \dot{t}_0, \dot{r}_0, \dot{p}_0)$ and $(q', \dot{t}_1, \dot{r}_1, \dot{p}_1)$ decide the statement $``\check{\alpha} \in \dot{f}"$ in opposite ways.
\end{subclaim}

Suppose the subclaim fails for some $(q,\dot{t}, \dot{r}, \dot{p})$. Define a $\mathbb{Q}$-name $\dot{f}'$ such that for all $q' \leq q$ and $\alpha < \kappa^+$, $q' \Vdash_{\mathbb{Q}} ``\check{\alpha} \in \dot{f}"$ if and only if there is $(\dot{t}', \dot{r}', \dot{p}')$ such that $(q', \dot{t}', \dot{r}', \dot{p}') \leq (q', \dot{t}, \dot{r}, \dot{p})$ and $(q',\dot{t}', \dot{r}', \dot{p}') \Vdash_{\mathbb{Q}*\mathbb{T}*\mathbb{R}*j(\mathbb{Q})} ``\check{\alpha} \in \dot{f}"$. Then $(q,\dot{t}, \dot{r}, \dot{p}) \Vdash ``\dot{f} = \dot{f}'"$, contradicting our choice of $\dot{f}$. This proves the subclaim.

Since $\kappa$ is not strongly inaccessible, letting $\gamma$ be the least cardinal such that $2^\gamma \geq \kappa$, we have $\gamma < \kappa$. Recall that $\mathbb{S}$ is the previously defined $\kappa$-closed dense subset of $\mathbb{Q}*\mathbb{T}$. We will construct $\langle q_i \mid i \leq \gamma \rangle$, $\langle (\dot{t}_s, \dot{r}_s, \dot{p}_s) \mid s \in {^{\leq \gamma} 2} \rangle$ and $\langle \alpha_i \mid i \leq \gamma \rangle$ such that:
\begin{enumerate}
\item{$(q_0, \dot{t}_{\langle \rangle}, \dot{r}_{\langle \rangle}, \dot{p}_{\langle \rangle}) \Vdash ``\dot{f}$ is a thread through $\mathcal{C}$".}
\item{For each $s \in {^{\leq \gamma} 2}$, $(q_{|s|}, \dot{t}_s, \dot{r}_s, \dot{p}_s) \in \mathbb{S}*\mathbb{R}*j(\mathbb{Q})$ and $\beta^{q_{|s|}}=\alpha_{|s|}$.}
\item{If $s,u \in {^{\leq \gamma} 2}$ and $s \subseteq u$, then $(q_{|u|}, \dot{t}_u, \dot{r}_u, \dot{p}_u) \leq (q_{|s|}, \dot{t}_s, \dot{r}_s, \dot{p}_s)$.}
\item{If $i<\gamma$ and $s \in {^i 2}$, then there is $\alpha \in [\alpha_i, \alpha_{i+1})$ such that $(q_{i+1}, \dot{t}_{s^\frown \langle 0 \rangle}$, $\dot{r}_{s^\frown \langle 0 \rangle}, \dot{p}_{s^\frown \langle 0 \rangle})$ and $(q_{i+1}, \dot{t}_{s^\frown \langle 1 \rangle}, \dot{r}_{s^\frown \langle 1 \rangle}, \dot{p}_{s^\frown \langle 1 \rangle})$ decide the statement $``\check{\alpha} \in \dot{f}"$ in opposite ways.}
\item{If $i<\gamma$ and $s \in {^i 2}$, then $(q_{i+1}, \dot{t}_{s^\frown \langle 0 \rangle}, \dot{r}_{s^\frown \langle 0 \rangle}, \dot{p}_{s^\frown \langle 0 \rangle})$ and $(q_{i+1}, \dot{t}_{s^\frown \langle 1 \rangle},$ $\dot{r}_{s^\frown \langle 1 \rangle}, \dot{p}_{s^\frown \langle 1 \rangle})$ both force that $\dot{f} \cap [\alpha_i, \alpha_{i+1}) \not= \emptyset$.}
\end{enumerate}

Suppose for a moment that we have successfully completed this construction. Find $q^* \leq q_\gamma$ such that $q^*$ decides the set $\mathcal{C}_{\alpha_\gamma}$ (this can be done, since $\mathbb{Q}$ is $(\kappa +1)$-strategically closed). Then, for each $s \in {^\gamma 2}$, $(q^*, \dot{t}_s, \dot{r}_s, \dot{p}_s) \in \mathbb{Q}*\mathbb{T}*\mathbb{R}*j(\mathbb{Q})$ and $(q^*, \dot{t}_s, \dot{r}_s, \dot{p}_s) \Vdash ``\alpha_\gamma$ is a limit point of $\dot{f}$". Moreover, if $s,u \in {^\gamma 2}$ and $s \not= u$, then $(q^*, \dot{t}_s, \dot{r}_s, \dot{p}_s)$ and $(q^*, \dot{t}_u, \dot{r}_u, \dot{p}_u)$ force contradictory information about $\dot{f} \cap \alpha_\gamma$, which is forced to be an element of $\mathcal{C}_{\alpha_\gamma}$. But $2^\gamma \geq \kappa$, contradicting the fact that $\mathcal{C}$ is a $\square_{\kappa, <\kappa}$-sequence.

We now turn to the construction. Fix $(q_0, \dot{t}_{\langle \rangle}, \dot{r}_{\langle \rangle}, \dot{p}_{\langle \rangle})$ such that $(q_0, \dot{t}_{\langle \rangle}, \dot{r}_{\langle \rangle}$, $\dot{p}_{\langle \rangle}) \Vdash ``\dot{f}$ is a thread through $\mathcal{C}$", and let $\alpha_0 = 0$. We first consider the successor case. Fix $i <\gamma$ and suppose that $q_i$, $\langle (\dot{t}_s, \dot{r}_s, \dot{p}_s) \mid s\in {^i 2} \rangle$, and $\alpha_i$ have been defined Enumerate ${^i 2}$ as $\langle s_j \mid j< 2^i \rangle$, noting that $2^i<\kappa$. Now, using the $\kappa$-closure of $\mathbb{Q}$, $\mathbb{S}$, $\mathbb{R}$, and $j(\mathbb{Q})$, the density of $\mathbb{S}$ in $\mathbb{Q}*\mathbb{T}$, the subclaim, and the fact that $(q_0, \dot{t}_{\langle \rangle}, \dot{r}_{\langle \rangle}, \dot{p}_{\langle \rangle}) \Vdash ``\dot{f}$ is unbounded in $\kappa^+$", it is straightforward to construct $\langle q^i_j \mid j <2^i \rangle$ and $\langle ((\dot{t}^*_{{s_j}^\frown \langle 0 \rangle}, \dot{r}^*_{{s_j}^\frown \langle 0 \rangle}, \dot{p}^*_{{s_j}^\frown \langle 0 \rangle}), (\dot{t}^*_{{s_j}^\frown \langle 1 \rangle}, \dot{r}^*_{{s_j}^\frown \langle 1 \rangle}, \dot{p}^*_{{s_j}^\frown \langle 1 \rangle})) \mid j<2^i \rangle$ such that:
\begin{itemize}
\item{$\langle q^i_j \mid j <2^i \rangle$ is a decreasing sequence of conditions from $\mathbb{Q}$ below $q_i$.}
\item{For all $j<2^i$, $(q^i_j, \dot{t}^*_{{s_j}^\frown \langle 0 \rangle}, \dot{r}^*_{{s_j}^\frown \langle 0 \rangle}, \dot{p}^*_{{s_j}^\frown \langle 0 \rangle})$, $(q^i_j, \dot{t}^*_{{s_j}^\frown \langle 1 \rangle}, \dot{r}^*_{{s_j}^\frown \langle 1 \rangle}, \dot{p}^*_{{s_j}^\frown \langle 1 \rangle}) \leq$ $(q^i_j, \dot{t}_s, \dot{r}_s, \dot{p}_s)$ are both in $\mathbb{S}*\mathbb{R}*j(\mathbb{Q})$ and both force that $\dot{f} \cap [\alpha_i, \beta^{q^i_j})\not= \emptyset$.}
\item{For all $j<2^i$, there is $\alpha \in [\alpha_i, \beta^{q^i_j})$ such that $(q^i_j, \dot{t}^*_{{s_j}^\frown \langle 0 \rangle}, \dot{r}^*_{{s_j}^\frown \langle 0 \rangle}, \dot{p}^*_{{s_j}^\frown \langle 0 \rangle})$ and $(q^i_j, \dot{t}^*_{{s_j}^\frown \langle 1 \rangle}, \dot{r}^*_{{s_j}^\frown \langle 1 \rangle}, \dot{p}^*_{{s_j}^\frown \langle 1 \rangle})$ decide the statement $``\check{\alpha} \in \dot{f}"$ in opposite ways.}
\end{itemize}
Now let $\xi=\sup (\{\delta \mid$ for some $j<2^i$ and $\ell \in \{0,1\}$, $q^i_j \Vdash ``\mathrm{otp}(\dot{t}^*_{{s_j}^\frown \langle \ell \rangle})=\check{\delta}" \})$. Since $2^i <\kappa$, we know that $\xi<\kappa$. Let $\alpha_{i+1}=\sup(\{\beta^{q^i_j} \mid j<2^i\})$. Let $E_{i+1}$ be a club in $\alpha_{i+1}$ of order type cf($\alpha_{i+1})<\kappa$. Define $q_{i+1}$ to be a lower bound of $\langle q^i_j \mid j <2^i \rangle$ by letting $\beta^{q_{i+1}}=\alpha_{i+1}$ and, for all $k<\kappa$, \[D^{q_{i+1}}(k,\alpha_{i+1})=
\begin{cases}
\emptyset  & \text{if } k<\xi+1 \\
E_{i+1} \cup \bigcup_{\beta \in E_{i+1}} D^{q_{i+1}}(k,\beta) & \text{if } k \geq \xi+1
\end{cases}.\] Finally, for all $j<2^i$ and $\ell \in \{0,1\}$, let $(\dot{t}_{{s_j}^\frown \langle \ell \rangle}, \dot{r}_{{s_j}^\frown \langle \ell \rangle}, \dot{p}_{{s_j}^\frown \langle \ell \rangle})$ be such that $q_{i+1} \Vdash ``(\dot{t}_{{s_j}^\frown \langle \ell \rangle}, \dot{r}_{{s_j}^\frown \langle \ell \rangle}, \dot{p}_{{s_j}^\frown \langle \ell \rangle})$ = $(\dot{t}^*_{{s_j}^\frown \langle \ell \rangle} \cup \{\alpha_{i+1}\}, \dot{r}^*_{{s_j}^\frown \langle \ell \rangle}, \dot{p}^*_{{s_j}^\frown \langle \ell \rangle})"$.

Next, suppose that $i\leq \gamma$ is a limit ordinal and that $\langle q_j \mid j<i \rangle$, $\langle (\dot{t}_s, \dot{r}_s, \dot{p}_s) \mid s \in {^{< i} 2} \rangle$ and $\langle \alpha_j \mid j < i \rangle$ have been defined. Let $\xi=\sup (\{\delta \mid$ for some $s\in {^{<i} 2}$, $q_{|s|} \Vdash ``\mathrm{otp}(\dot{t}_s)=\check{\delta}" \})$. Since $2^{<i}<\kappa$, we know that $\xi<\kappa$. Let $\alpha_i = \sup(\{\beta^{q_j} \mid j<i\})$ and let $E_i$ be a club in $\alpha_i$ of order type cf($\alpha_i)<\kappa$. Define $q_i$ to be a lower bound for $\langle q_j \mid j<i \rangle$ by letting $\beta^{q_i}=\alpha_i$ and \[D^{q_i}(k,\alpha_i)=
\begin{cases}
\emptyset  & \text{if } k<\xi+1 \\
E_{i} \cup \bigcup_{\beta \in E_{i}} D^{q_{i}}(k,\beta) & \text{if } k \geq \xi+1
\end{cases}.\] Finally, for all $s\in {^i 2}$, let $\dot{t}_s$ be such that $q_i \Vdash ``\dot{t}_s = \bigcup_{j<i}\dot{t}_{s\restriction j} \cup \{\alpha_i\}"$ and let $(\dot{r}_s, \dot{p}_s)$ be forced by $(q_i, \dot{t}_s)$ to be a lower bound for $\langle (\dot{r}_{s\restriction j}, \dot{p}_{s\restriction j}) \mid j<i \rangle$. This is possible, since $\mathbb{R}*j(\mathbb{Q})$ is $\kappa$-closed. It is easily verified that this construction satisfies conditions 1-5 above.

But now we have shown that $F$, which threads $\mathcal{C}$, is in $V[G*H]$, contradicting the fact that $\mathcal{C}$ is a $\square_{\kappa, <\kappa}$-sequence in $V[G*H]$. Thus, $\square_{\kappa, <\kappa}$ fails in $V[G*H]$. 
\end{proof}

Note that, if $\kappa$ is supercompact and $\lambda > \kappa$ is measurable, we can also obtain a model in which there is a transitive, normal, uniform $\kappa$-covering matrix for $\kappa^+$ but $\square_{\kappa, <\kappa}$ fails by first making the supercompactness of $\kappa$ indestructible under $\kappa$-directed closed forcing and then forcing with $\mathbb{Q}$. We conjecture that we can obtain such a model for all regular, uncountable $\kappa$ but do not have a proof when $\kappa$ is inaccessible but not supercompact.

We now investigate counterexamples to $\mathrm{CP}(\mathcal{D})$ and $\mathrm{S}(\mathcal{D})$ for more general shapes of covering matrices. Recall the following definitions:

\begin{definition}
Let $\kappa$ be an infinite cardinal. 
 \begin{enumerate}
  \item{$\overrightarrow{C} = \langle C_\alpha \mid \alpha \in \mathrm{lim}(\kappa) \rangle$ is a {\em coherent sequence} if, for all $\alpha, \beta \in \mathrm{lim}(\kappa)$, 
  \begin{enumerate}
   \item{$C_\alpha$ is a club in $\alpha$.}
   \item{If $\alpha \in C'_\beta$, then $C_\alpha = C_\beta \cap \alpha$.}
  \end{enumerate}}
  \item{Let $\overrightarrow{C} = \langle C_\alpha \mid \alpha \in \mathrm{lim}(\kappa) \rangle$ be a coherent sequence. If $D$ is a club in $\kappa$, then $D$ is a {\em thread} through $\overrightarrow{C}$ if, for every $\alpha \in D'$, $D\cap \alpha = C_\alpha$.}
  \item{$\overrightarrow{C} = \langle C_\alpha \mid \alpha \in \mathrm{lim}(\kappa) \rangle$ is a $\square(\kappa)$-sequence if it is a coherent sequence that has no thread. We say that $\square(\kappa)$ holds if there is a $\square(\kappa)$-sequence.}
 \end{enumerate}
\end{definition}

Let $\theta <  \lambda$ be regular, infinite cardinals and suppose that $\square(\lambda)$ holds. Fix a $\square(\lambda)$-sequence $\overrightarrow{C}$. Arrange so that $C_\alpha$ is defined for all $\alpha < \lambda$ by letting $C_{\alpha+1} = \{\alpha \}$. The definitions of the following functions are due to Todorcevic \cite{todorcevic}: First, define $\Lambda_\theta : [\lambda]^2 \rightarrow \lambda$ by 
\[\Lambda_\theta (\alpha, \beta) = \max \{ \xi \in C_\beta \cap (\alpha + 1) \mid \theta \mbox{ divides } \mathrm{otp}(C_\beta \cap \xi) \} \]
Next, let $\rho_\theta : [\lambda]^2 \rightarrow \theta$ be defined recursively by
\begin{multline*}
\rho_\theta (\alpha, \beta) = \sup \{\mathrm{otp}(C_\beta \cap [\Lambda_\theta(\alpha, \beta),\alpha)), \rho_\theta(\alpha, \min(C_\beta \setminus \alpha)), \\ \rho_\theta(\xi, \alpha) \mid \xi \in C_\beta \cap [\Lambda_\theta(\alpha, \beta), \alpha)\}
\end{multline*}

Proofs of the following lemmas can be found in \cite{todorcevic}.

\begin{lemma}
\label{rho}
Let $\alpha < \beta < \gamma < \lambda$. 
\begin{enumerate}
 \item{$\rho_\theta(\alpha, \gamma) \leq \max \{ \rho_\theta(\alpha, \beta), \rho_\theta(\beta, \gamma) \}$.}
 \item{$\rho_\theta(\alpha, \beta) \leq \max \{ \rho_\theta(\alpha, \gamma), \rho_\theta(\beta, \gamma) \}$.}
\end{enumerate}
\end{lemma}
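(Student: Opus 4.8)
The plan is to prove both inequalities \emph{simultaneously} by induction on $\gamma$, since the inductive step for each part invokes the other part at ordinals strictly below $\gamma$. Throughout I fix $\alpha < \beta < \gamma$ and write $\gamma_\beta = \min(C_\gamma \setminus \beta)$ for the first step of the minimal walk from $\gamma$ toward $\beta$, and $L = \Lambda_\theta(\beta,\gamma)$. Because $\gamma_\beta \in C_\gamma$ we have $\gamma_\beta < \gamma$, so every clause in the recursion defining $\rho_\theta(\beta,\gamma)$ — the block term $\mathrm{otp}(C_\gamma \cap [L,\beta))$, the continuation $\rho_\theta(\beta,\gamma_\beta)$, and the side-steps $\rho_\theta(\xi,\beta)$ for $\xi \in C_\gamma \cap [L,\beta)$ — refers to walks of strictly smaller upper coordinate, and likewise for $\rho_\theta(\alpha,\gamma)$. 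This is precisely what licenses the induction.

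The organizing observation is that the minimal walks from $\gamma$ to $\alpha$ and from $\gamma$ to $\beta$ share an initial segment: writing $\gamma_\alpha = \min(C_\gamma\setminus\alpha)$, one checks $\gamma_\alpha \le \gamma_\beta$, with equality exactly when $C_\gamma \cap [\alpha,\beta) = \emptyset$, and otherwise $\gamma_\alpha$ is the least element of $C_\gamma \cap [\alpha,\beta)$. This dichotomy drives the case analysis. I would first dispose of the agreement case $\gamma_\alpha = \gamma_\beta$: here both $\rho_\theta(\alpha,\gamma)$ and $\rho_\theta(\beta,\gamma)$ reduce through their continuation clauses to the corresponding data at $\gamma_\beta < \gamma$, while the block and side-step terms match because $C_\gamma\cap[\Lambda_\theta(\alpha,\gamma),\alpha)$ and $C_\gamma\cap[L,\beta)$ differ only by the empty set $C_\gamma\cap[\alpha,\beta)$; both inequalities then drop out of the induction hypothesis applied at $\gamma_\beta$.

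The substantive case is the split $\gamma_\alpha \in C_\gamma \cap [\alpha,\beta)$, where the walk toward $\alpha$ descends below $\beta$ immediately while the walk toward $\beta$ does not. Here the coherence of $\overrightarrow{C}$ does the essential work: when $\alpha$ is a limit point of the relevant club one has identifications such as $C_\alpha = C_{\gamma_\beta}\cap\alpha$, which let me rewrite the order-type terms $\mathrm{otp}(C_\gamma\cap[\Lambda_\theta(\alpha,\gamma),\alpha))$ and the side-step values $\rho_\theta(\xi,\alpha)$ occurring in $\rho_\theta(\alpha,\gamma)$ in terms of data already appearing in $\rho_\theta(\alpha,\beta)$ and $\rho_\theta(\beta,\gamma)$ — in particular $\gamma_\alpha$ itself is one of the side-step points $\xi$ of $\rho_\theta(\beta,\gamma)$ whenever $\gamma_\alpha \ge L$. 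The $\Lambda_\theta$-bookkeeping (for instance that $\Lambda_\theta(\alpha,\gamma) \ge L$ whenever $L \le \alpha$, and tracking where the $\theta$-divisibility marker lies relative to $\alpha$, $\gamma_\alpha$ and $\beta$) is the delicate part. I expect this branching case to be the main obstacle: the goal there is to show that \emph{every} term in the recursion for the left-hand side is dominated by a term already accounted for on the right, each appeal to the induction hypothesis possibly routed through the companion inequality. By contrast, the base configurations (where $\alpha$ or $\beta$ coincides with a walk endpoint and the relevant block is empty) and the successor convention $C_{\alpha+1} = \{\alpha\}$ are routine.
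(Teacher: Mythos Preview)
The paper does not prove this lemma at all: immediately before stating it, the author writes ``Proofs of the following lemmas can be found in \cite{todorcevic}'' and moves on. So there is no in-paper argument to compare against; the intended reference is Todorcevic's monograph on walks on ordinals.

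Your plan --- simultaneous induction on $\gamma$, with the case split governed by whether $\min(C_\gamma\setminus\alpha)$ equals $\min(C_\gamma\setminus\beta)$ --- is exactly the standard approach Todorcevic uses for the subadditivity of the $\rho$-functions, so in spirit you are aligned with the cited source. One caution: your ``agreement case'' is slightly overstated. Even when $C_\gamma\cap[\alpha,\beta)=\emptyset$ it can happen that $\gamma_\beta=\beta\in C_\gamma$, in which case $\Lambda_\theta(\beta,\gamma)$ may equal $\beta$ while $\Lambda_\theta(\alpha,\gamma)$ lies strictly below $\alpha$, so the block intervals $C_\gamma\cap[\Lambda_\theta(\alpha,\gamma),\alpha)$ and $C_\gamma\cap[L,\beta)$ need not literally coincide; you still need a short argument here rather than an identification. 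Likewise your appeal to coherence in the split case (``$C_\alpha = C_{\gamma_\beta}\cap\alpha$'') is not the right identification --- coherence gives you $C_{\gamma_\alpha} = C_\gamma \cap \gamma_\alpha$ when $\gamma_\alpha$ is a limit point of $C_\gamma$, not a statement about $C_\alpha$. These are fixable details, and you correctly flag the $\Lambda_\theta$-bookkeeping as the place where the real work lies; but as written the sketch would need tightening before it constitutes a proof.
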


\begin{lemma}
\label{closed}
Let $i<\theta$ and $\beta < \lambda$. $\{ \alpha < \beta \mid \rho_\theta(\alpha, \beta) \leq i \}$ is closed.
\end{lemma}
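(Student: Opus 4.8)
The plan is to argue by induction on $\beta < \lambda$. Fix $i < \theta$, write $A = \{\alpha < \beta \mid \rho_\theta(\alpha,\beta) \le i\}$, let $\delta < \beta$ be a limit point of $A$, and show $\rho_\theta(\delta,\beta) \le i$; as $\delta$ is arbitrary this gives closedness. Two preliminary observations drive everything. First, a \emph{coherence} fact: if $\delta \in C'_\beta$ then $C_\delta = C_\beta \cap \delta$, and one checks directly from the definitions of $\Lambda_\theta$ and $\rho_\theta$ that $\rho_\theta(\eta, \delta) = \rho_\theta(\eta, \beta)$ for every $\eta < \delta$. Second, a \emph{stabilization} fact: with $\lambda^* = \Lambda_\theta(\delta,\beta)$, one has $\Lambda_\theta(\alpha,\beta) = \lambda^*$ for every $\alpha \in [\lambda^*,\delta)$, there being no $\theta$-divisible point of $C_\beta$ in $(\lambda^*,\delta)$; I may assume $\lambda^* < \delta$, since otherwise $\theta \mid \mathrm{otp}(C_\beta\cap\delta)$ and $\rho_\theta(\delta,\beta)=0$ outright. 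Write $Z = C_\beta \cap [\lambda^*,\delta)$ and $\beta^* = \min(C_\beta\setminus\delta)$; if $\delta \notin C_\beta$ then moreover $\min(C_\beta\setminus\alpha) = \beta^*$ and $C_\beta \cap [\lambda^*,\alpha) = Z$ for all $\alpha$ in the nonempty interval $(\max(C_\beta\cap\delta),\delta)$, and $\beta^* < \beta$.

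Unwinding the defining recursion, $\rho_\theta(\delta,\beta)$ is the supremum of three kinds of terms: (I) $\mathrm{otp}(Z)$; (II) $\rho_\theta(\delta, \beta^*)$; and (III) $\rho_\theta(\xi,\delta)$ for $\xi \in Z$. The argument consists of bounding each by $i$, exploiting that $\rho_\theta(\alpha,\beta) \le i$ for a cofinal-in-$\delta$ set of $\alpha \in A$, each of which, by stabilization, has its own recursion built from the \emph{same} $\lambda^*$, $\beta^*$, and $Z$. Term (I) is immediate: $\mathrm{otp}(Z) = \sup_{\alpha} \mathrm{otp}(C_\beta \cap [\lambda^*,\alpha))$ is a supremum of ordinals each appearing as a sub-term of some $\rho_\theta(\alpha,\beta) \le i$, hence $\le i$. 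Term (II) splits on whether $\delta \in C_\beta$: if so, $\beta^* = \delta$ and the term is $\rho_\theta(\delta,\delta) = 0$; if not, then $\beta^* > \delta$ and $\beta^* < \beta$, so I apply the inductive hypothesis at $\beta^*$. Indeed the points $\alpha \in A$ near $\delta$ satisfy $\rho_\theta(\alpha,\beta^*) \le i$ (their own term (II)), so $\delta$ is a limit point of $\{\alpha < \beta^* \mid \rho_\theta(\alpha,\beta^*) \le i\}$, closed by induction, whence $\rho_\theta(\delta,\beta^*) \le i$.

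Term (III) is the crux, and is where Lemma \ref{rho} does its work. Fix $\xi \in Z$, so $\xi < \delta$. For every $\alpha \in A$ with $\xi < \alpha < \delta$ past the stabilization point I get two facts for free: $\rho_\theta(\xi,\alpha) \le i$ (a term (III) of $\rho_\theta(\alpha,\beta)$, since $\xi \in Z = C_\beta \cap [\lambda^*,\alpha)$) and $\rho_\theta(\alpha,\beta^*) \le i$ (a term (II)). When $\delta \notin C_\beta$, so $\xi < \delta < \beta^*$, Lemma \ref{rho}(1) gives $\rho_\theta(\xi,\beta^*) \le \max\{\rho_\theta(\xi,\alpha), \rho_\theta(\alpha,\beta^*)\} \le i$, and then Lemma \ref{rho}(2) gives $\rho_\theta(\xi,\delta) \le \max\{\rho_\theta(\xi,\beta^*), \rho_\theta(\delta,\beta^*)\} \le i$, using the bound on term (II) already obtained. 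When $\delta \in C_\beta$, I instead produce a single $\alpha \in A$ with $\xi < \alpha < \delta$ for which $\rho_\theta(\alpha,\delta) \le i$: if $\delta \in C'_\beta$ this is the coherence fact, since $\rho_\theta(\alpha,\delta) = \rho_\theta(\alpha,\beta) \le i$, and if $\delta$ is an isolated point of $C_\beta$ then $\min(C_\beta \setminus \alpha) = \delta$ for $\alpha$ near $\delta$, so $\rho_\theta(\alpha,\delta)$ is literally a term (II) of $\rho_\theta(\alpha,\beta) \le i$. Either way Lemma \ref{rho}(1) yields $\rho_\theta(\xi,\delta) \le \max\{\rho_\theta(\xi,\alpha), \rho_\theta(\alpha,\delta)\} \le i$.

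I expect the main obstacle to be precisely term (III): the ``lower'' values $\rho_\theta(\xi,\delta)$ cannot be computed directly, because $C_\delta$ bears no relation to $C_\beta$ when $\delta \notin C'_\beta$. The resolution above is never to compute $\rho_\theta(\xi,\delta)$ from its own walk, but to route it through $\beta^*$ (or through $\delta$ itself in the $\delta \in C_\beta$ case) via the two triangle inequalities of Lemma \ref{rho}, feeding on the sub-terms that the hypothesis $\rho_\theta(\alpha,\beta) \le i$ supplies for free. The remaining bookkeeping, namely verifying the stabilization claims and that $\beta^* < \beta$, is routine.
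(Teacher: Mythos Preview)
The paper does not give its own proof of this lemma; it simply records that a proof can be found in Todorcevic's monograph. So there is nothing in the paper to compare your argument against directly.

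That said, your argument is correct. The induction on $\beta$, the stabilization of $\Lambda_\theta$ below $\delta$, and the case split on whether $\delta \in C_\beta$ are exactly the right moves. The delicate point---bounding the terms $\rho_\theta(\xi,\delta)$ for $\xi \in C_\beta \cap [\lambda^*,\delta)$ without any control over $C_\delta$---is handled cleanly by routing through $\beta^*$ (or $\delta$ itself) via the two inequalities of Lemma~\ref{rho}, and your verification that the needed sub-bounds $\rho_\theta(\xi,\alpha) \le i$ and $\rho_\theta(\alpha,\beta^*) \le i$ (respectively $\rho_\theta(\alpha,\delta) \le i$) fall out of the recursion for $\rho_\theta(\alpha,\beta)$ is sound. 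The coherence claim in the $\delta \in C'_\beta$ sub-case, that $\rho_\theta(\eta,\delta) = \rho_\theta(\eta,\beta)$ for all $\eta < \delta$, is indeed an immediate unwinding once one notes $C_\delta = C_\beta \cap \delta$ forces $\Lambda_\theta(\eta,\delta) = \Lambda_\theta(\eta,\beta)$ and $\min(C_\delta \setminus \eta) = \min(C_\beta \setminus \eta)$. The bookkeeping you flag as routine (existence of $\max(C_\beta \cap \delta)$ when $\delta \notin C_\beta$, the inequality $\beta^* < \beta$) is genuinely routine: closedness of $C_\beta$ gives the former, and unboundedness of $C_\beta$ in $\beta$ (or $C_\beta = \{\gamma\}$ when $\beta = \gamma+1$) gives the latter.

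This is essentially the argument Todorcevic gives, organized around the same induction and the same appeal to the subadditivity inequalities.
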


Define a covering matrix $\mathcal{D} = \{D(i,\beta) \mid i<\theta, \beta < \lambda \}$ by letting $D(i,\beta) = \{ \alpha < \beta \mid \rho_\theta(\alpha, \beta) \leq i \}$. It is clear that $\mathcal{D}$ satisfies conditions 1 and 2 in the definition of a covering matrix. Part 1 of Lemma \ref{rho} implies that $\mathcal{D}$ is transitive. In fact, together with part 2 of the same lemma, it implies a stronger coherence property, namely that if $i<\theta$, $\alpha < \beta < \lambda$, and $\alpha \in D(i,\beta)$, then $D(i,\alpha) = D(i,\beta) \cap \alpha$. Lemma \ref{closed} implies that $\mathcal{D}$ is closed. We now show, however, that in general $\mathcal{D}$ is not uniform. First, we make the following definition:

\begin{definition}
\begin{enumerate}
\item{Let $A$ be a set of ordinals and let $\mu$ be an infinite cardinal. $A^{[\mu]} = \{\alpha \in A \mid \mu \mbox{ divides } A \cap \alpha \}$.}
\item{Let $\mu < \kappa$ be infinite regular cardinals. $\overrightarrow{C}$ is a $\square^\mu(\kappa)$-sequence if $\overrightarrow{C}$ is a $\square(\kappa)$-sequence and $\{\alpha \in S^\kappa_\mu \mid C^{[\mu]}_\alpha \mbox{ is bounded below } \alpha \}$ is stationary. $\square^\mu(\kappa)$ is the statement that a $\square^\mu(\kappa)$-sequence exists.}
\end{enumerate}
\end{definition}

\begin{lemma}
 Let $\beta < \lambda$ be such that $\mathrm{cf}(\beta) = \theta$ and $C^{[\theta]}_\beta$ is bounded below $\beta$. Then, for every $i<\theta$, $D(i,\beta)$ is bounded below $\beta$.
\end{lemma}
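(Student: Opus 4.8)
The plan is to show that, for each fixed $i<\theta$, every sufficiently large $\alpha<\beta$ already satisfies $\rho_\theta(\alpha,\beta)>i$, so that $D(i,\beta)$ cannot reach up to $\beta$. The entire argument will rely only on the very first term, $\mathrm{otp}(C_\beta\cap[\Lambda_\theta(\alpha,\beta),\alpha))$, of the supremum defining $\rho_\theta$; neither the recursion nor Lemmas \ref{rho} and \ref{closed} will be needed.

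First I would unpack the hypotheses in terms of the increasing enumeration $C_\beta=\langle c_\zeta\mid \zeta<\gamma\rangle$, where $\gamma=\mathrm{otp}(C_\beta)$. By definition, $\Lambda_\theta(\alpha,\beta)$ is the largest element of $C^{[\theta]}_\beta=\{c_{\theta\cdot\zeta}\mid \theta\cdot\zeta<\gamma\}$ that is $\le\alpha$. Since $C_\beta$ is club in $\beta$, we have $\mathrm{cf}(\gamma)=\mathrm{cf}(\beta)=\theta$; writing $\gamma=\theta\cdot\tau+\rho$ with $\rho<\theta$, regularity of $\theta$ forces $\rho=0$, and the boundedness of $C^{[\theta]}_\beta$ (equivalently, of the multiples of $\theta$ below $\gamma$) rules out $\tau$ being a limit, so $\gamma=\theta\cdot\tau_0+\theta$ for some $\tau_0$. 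Consequently $C^{[\theta]}_\beta$ has a maximum element $\delta_0:=c_{\theta\cdot\tau_0}$, and the final block $C_\beta\cap[\delta_0,\beta)$ has order type exactly $\theta$ and is cofinal in $\beta$.

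Now fix $i<\theta$ and let $\alpha^*:=c_{\theta\cdot\tau_0+i}$, the element of the final block at position $i$; note $\alpha^*<\beta$ since $i<\theta$. For any $\alpha$ with $\alpha^*<\alpha<\beta$ we have $\alpha\ge\delta_0=\max C^{[\theta]}_\beta$, so $\Lambda_\theta(\alpha,\beta)=\delta_0$. Moreover $C_\beta\cap[\delta_0,\alpha)$ contains $c_{\theta\cdot\tau_0},c_{\theta\cdot\tau_0+1},\dots,c_{\theta\cdot\tau_0+i}$, whence $\mathrm{otp}(C_\beta\cap[\delta_0,\alpha))\ge i+1>i$. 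Since this order type is one of the terms in the supremum defining $\rho_\theta(\alpha,\beta)$, we get $\rho_\theta(\alpha,\beta)>i$, i.e.\ $\alpha\notin D(i,\beta)$. Hence $D(i,\beta)\subseteq\alpha^*+1$, which is bounded below $\beta$, as required.

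The main obstacle is the purely ordinal-arithmetic step in the second paragraph: extracting from the two hypotheses ($\mathrm{cf}(\beta)=\theta$ and $C^{[\theta]}_\beta$ bounded) the precise structural fact that $C_\beta$ ends in a final block of order type exactly $\theta$ containing no further multiple of $\theta$, and in particular that $C^{[\theta]}_\beta$ attains its maximum $\delta_0$. Once $\delta_0$ is in hand the rest is immediate, and the argument automatically covers arbitrary $\alpha<\beta$ (not merely $\alpha\in C_\beta$), since $\Lambda_\theta(\alpha,\beta)$ depends only on $C^{[\theta]}_\beta\cap(\alpha+1)$.
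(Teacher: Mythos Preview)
Your proof is correct and follows essentially the same approach as the paper: both identify $\delta_0 = \max(C^{[\theta]}_\beta)$ (the paper calls it $\xi$), observe that the tail $C_\beta \setminus \delta_0$ has order type $\theta$, and use only the first term $\mathrm{otp}(C_\beta \cap [\Lambda_\theta(\alpha,\beta),\alpha))$ of the supremum defining $\rho_\theta$ to conclude $\rho_\theta(\alpha,\beta)>i$ for all sufficiently large $\alpha$. Your second paragraph supplies the ordinal-arithmetic justification for the existence of $\max(C^{[\theta]}_\beta)$ that the paper simply asserts, but otherwise the arguments are the same.
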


\begin{proof}
Let $\xi = \max(C^{[\theta]}_\beta)$. Since $\xi < \beta$ and $\mathrm{cf}(\beta) = \theta$, $\mathrm{otp}(C_\beta \setminus (\xi + 1)) = \theta$. Enumerate $C_\beta \setminus (\xi +1)$ in increasing order as $\langle \beta_i \mid i < \theta \rangle$. Now, if $i<\theta$ and $\beta_i < \alpha < \beta$, then $\Lambda_\theta(\alpha, \beta) = \xi$ and $\mathrm{otp}(C_\beta \cap [\xi, \alpha)) > i$, so $\rho_\theta(\alpha, \beta)>i$. Thus, $D(i,\beta) \subseteq (\beta_i +1)$. 
\end{proof}

\begin{lemma}
 Suppose $\overrightarrow{C}$ is a $\square^\theta(\lambda)$-sequence. Then $\mathrm{CP}(\mathcal{D}$) and $\mathrm{S}(\mathcal{D}$) both fail.
\end{lemma}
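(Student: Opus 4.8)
The plan is to reduce the statement to the failure of $\mathrm{CP}(\mathcal{D})$ alone, and then to produce, for an arbitrary unbounded $T \subseteq \lambda$, a set $X \in [T]^\theta$ that no $D(i,\gamma)$ can contain. Since $\mathcal{D}$ is transitive, the earlier lemma relating the two principles gives that $\mathrm{S}(\mathcal{D})$ implies $\mathrm{CP}(\mathcal{D})$; taking the contrapositive, it suffices to show that $\mathrm{CP}(\mathcal{D})$ fails in order to conclude that both fail.

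So fix a $\square^\theta(\lambda)$-sequence $\overrightarrow{C}$ and let $S^* = \{\alpha \in S^\lambda_\theta \mid C^{[\theta]}_\alpha \text{ is bounded below } \alpha\}$, which is stationary by the definition of $\square^\theta(\lambda)$. By the previous lemma, for every $\beta \in S^*$ and every $i < \theta$ the set $D(i,\beta)$ is bounded below $\beta$. Now let $T \subseteq \lambda$ be unbounded. The set of limit points of $T$ is club in $\lambda$, so I can choose $\beta \in S^*$ that is a limit point of $T$; then $\mathrm{cf}(\beta) = \theta$ and $T \cap \beta$ is unbounded in $\beta$. Using the regularity of $\theta$, build an increasing sequence $\langle t_\xi \mid \xi < \theta \rangle$ in $T \cap \beta$ that is cofinal in $\beta$, and set $X = \{t_\xi \mid \xi < \theta\}$, so that $X \in [T]^\theta$, $\mathrm{otp}(X) = \theta$, and $\sup X = \beta$.

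The heart of the argument is to check that this $X$ is not contained in any $D(i,\gamma)$. Suppose toward a contradiction that $X \subseteq D(i,\gamma)$ for some $i < \theta$ and $\gamma < \lambda$. If $\gamma < \beta$, then $D(i,\gamma) \subseteq \gamma$ is bounded below $\beta$ while $X$ is cofinal in $\beta$, a contradiction. If $\gamma = \beta$, then $X \subseteq D(i,\beta)$, contradicting that $D(i,\beta)$ is bounded below $\beta$. The remaining case $\gamma > \beta$ is the only one with real content: here $X \in [D(i,\gamma)]^{\leq \theta}$, so the closedness of $\mathcal{D}$ yields $\sup X = \beta \in D(i,\gamma)$; the strong coherence property noted after the definition of $\mathcal{D}$ (namely, $\beta \in D(i,\gamma)$ implies $D(i,\beta) = D(i,\gamma) \cap \beta$) then applies. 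Since $X \subseteq \beta$ and $X \subseteq D(i,\gamma)$, this forces $X \subseteq D(i,\gamma) \cap \beta = D(i,\beta)$, once again contradicting the boundedness of $D(i,\beta)$ below $\beta$.

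Thus no $D(i,\gamma)$ contains $X$, so $T$ does not witness $\mathrm{CP}(\mathcal{D})$. As $T$ was arbitrary, $\mathrm{CP}(\mathcal{D})$ fails, and hence so does $\mathrm{S}(\mathcal{D})$. I expect the only genuinely non-routine step to be the case $\gamma > \beta$, where one must combine the closedness of $\mathcal{D}$ (to capture $\sup X = \beta$ inside $D(i,\gamma)$) with its coherence (to descend to $D(i,\beta)$); everything else is bookkeeping with the preceding lemma, the stationarity of $S^*$, and the regularity of $\theta$.
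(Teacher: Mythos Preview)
Your proof is correct and follows essentially the same approach as the paper's: pick a limit point $\beta$ of $T$ in the stationary set where $C^{[\theta]}_\beta$ is bounded, take $X\in[T]^\theta$ cofinal in $\beta$, and use closedness plus the strong coherence of $\mathcal{D}$ to reduce any containment $X\subseteq D(i,\gamma)$ to $X\subseteq D(i,\beta)$, contradicting the previous lemma. The only difference is cosmetic: you make the case split on $\gamma<\beta$, $\gamma=\beta$, $\gamma>\beta$ explicit, whereas the paper handles all cases in one line (the first two being immediate since $D(i,\gamma)\subseteq\gamma$).
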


\begin{proof}
 Suppose for sake of contradiction that $\mathrm{CP}(\mathcal{D}$) holds and is witnessed by an unbounded $T\subseteq \lambda$. Since $\overrightarrow{C}$ is a $\square^\theta(\lambda)$-sequence, we can find $\alpha \in T'$ such that $C^{[\theta]}_\alpha$ is bounded below $\alpha$. Let $X\in [T]^\theta$ be an unbounded subset of $\alpha$. Find $i<\theta$ and $\beta < \lambda$ such that $X\subseteq D(i,\beta)$. Since $\mathcal{D}$ is closed, $\alpha \in D(i,\beta)$, so $D(i,\alpha)=D(i,\beta) \cap \alpha$ and $X \subseteq D(i,\alpha)$. This is a contradiction, as the previous lemma implies that $D(i,\alpha)$ is bounded below $\alpha$. Thus, $\mathrm{CP}(\mathcal{D}$) fails. Since $\mathcal{D}$ is transitive, this means that $\mathrm{S}(\mathcal{D}$) fails as well. 
\end{proof}

The question now naturally arises whether $\square^\theta(\lambda)$ is a strictly stronger assumption than $\square(\lambda)$. This and related questions are addressed in the remainder of this paper.

\section{Squares}

\begin{definition}
Let $A\subseteq \kappa$ and let $\overrightarrow{C}$ be a $\square(\kappa)$-sequence. $\overrightarrow{C}$ {\em avoids} $A$ if, for every $\alpha \in \mathrm{lim}(\kappa)$, $C'_\alpha \cap A = \emptyset$.
\end{definition}

It is well known that if $\square_\kappa$ holds and $S\subseteq \kappa^+$ is stationary, then there is a stationary $T\subseteq S$ and a $\square_\kappa$-sequence that avoids $T$. We would like to know to what extent similar phenomena occur in connection with $\square(\kappa)$. The following proposition, whose proof we include for completeness, provides some information in this direction by showing that, if $\kappa$ is regular and $S\subseteq \kappa$ is stationary, then every $\square(\kappa)$-sequence must, in a certain sense, avoid a pair of stationary subsets of $S$.

\begin{proposition}
 Let $\kappa >\omega_1$ be a regular cardinal, and let $\overrightarrow{C}=\langle C_\alpha \mid \alpha \in \mathrm{lim}(\kappa)\rangle$ be a coherent sequence. Then the following are equivalent:
\begin{enumerate}
 \item {$\overrightarrow{C}$ is a $\square(\kappa)$-sequence.}
 \item{For every stationary $T\subseteq \kappa$, there are stationary $S_0, S_1\subseteq T$ such that for every $\alpha \in \mathrm{lim}(\kappa)$, $C'_\alpha \cap S_0 = \emptyset$ or $C'_\alpha \cap S_1 = \emptyset$.}
\end{enumerate}
\end{proposition}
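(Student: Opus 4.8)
The plan is to prove the two implications separately, with the reverse direction $(2)\Rightarrow(1)$ being quick and the forward direction $(1)\Rightarrow(2)$ carrying the real weight.

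For $(2)\Rightarrow(1)$ I would argue the contrapositive. Since $\overrightarrow{C}$ is coherent by hypothesis, failing to be a $\square(\kappa)$-sequence means it has a thread $D$. Set $T=D'$, which is club and hence stationary. For $\alpha\in D'$ we have $C_\alpha=D\cap\alpha$, so $C'_\alpha=D'\cap\alpha$; thus for \emph{any} stationary $S_0,S_1\subseteq T=D'$ and any $\alpha\in D'$ lying above an element of $S_0$ and an element of $S_1$, we get $C'_\alpha\cap S_i=S_i\cap\alpha\neq\emptyset$ for both $i$. So no two stationary subsets of $T$ can ever be separated, and (2) fails.

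For $(1)\Rightarrow(2)$ the organizing device is the tree order $\prec$ on $\mathrm{lim}(\kappa)$ defined by $\gamma\prec\delta\iff\gamma\in C'_\delta$. Coherence makes the set of $\prec$-predecessors of $\delta$ equal to the chain $C'_\delta$, and one checks that a $\prec$-chain cofinal in $\kappa$ is precisely a thread; so (1) says exactly that $\prec$ has no cofinal chain. The key reduction is that \emph{it suffices to find a stationary $\prec$-antichain $S\subseteq T$}: since a chain meets an antichain in at most one point, splitting $S$ into two stationary pieces $S_0,S_1$ (possible as $\kappa>\omega_1$ is regular) gives sets no single $C'_\alpha$ can meet in both, which is (2). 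To produce the antichain I would stratify $T$ by the rank $r(\delta)=\mathrm{otp}(C'_\delta\cap T)$; coherence forces $r$ to increase strictly along $\prec$, so each level $L_\eta=\{\delta\in T:r(\delta)=\eta\}$ is a $\prec$-antichain. If $r$ is regressive on a stationary subset of $T$, Fodor's lemma pins it to a constant value on a stationary set, i.e. produces a stationary level, finishing the proof. This regressivity is automatic as soon as there is a stationary $T'\subseteq T$ that is \emph{thin} along $\overrightarrow{C}$, in the sense that $C'_\delta\cap T'$ is bounded in $\delta$ for every $\delta\in T'$ (for then $\mathrm{otp}(C'_\delta\cap T')<\delta$); and such a thin set always exists, for instance, when $T\cap S^\kappa_\omega$ is stationary, since for $\delta$ of cofinality $\omega$ the set $C'_\delta$ is bounded in $\delta$.

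The main obstacle is therefore the complementary case, in which no stationary subset of $T$ is thin, i.e. every stationary $T'\subseteq T$ reflects along $\overrightarrow{C}$ (equivalently $r(\delta)=\delta$ for stationarily many $\delta$, so $r$ is nowhere regressive). This is exactly the point at which non-threadability must be invoked: I would argue that such rich reflection lets one assemble a cofinal $\prec$-chain, hence a thread, contradicting (1), so this case cannot in fact occur. The construction would repeatedly apply Fodor to the regressive map $\delta\mapsto\min(C'_\delta\cap T\setminus\xi)$ to extend a common $\prec$-chain $\eta_0\prec\eta_1\prec\cdots$ through a shrinking family of stationary reservoirs. The delicate part — and what I expect to be the genuine crux — is the limit stages, where one must preserve enough stationarity of the reservoir to continue the chain all the way to $\kappa$; this is where the hypothesis $\kappa>\omega_1$ and the non-existence of a thread are essential, and I anticipate it requires a careful fixed-point or elementary-submodel bookkeeping rather than a naive transfinite recursion.
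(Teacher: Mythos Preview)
Your $(2)\Rightarrow(1)$ argument is correct and essentially the same as the paper's.

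For $(1)\Rightarrow(2)$, your reduction to finding a stationary $\prec$-antichain inside $T$ is a different route from the paper's, and it is a reasonable organizing idea, but the proposal has one minor error and one genuine gap.

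The minor error: it is not true that $C'_\delta$ is bounded below $\delta$ whenever $\mathrm{cf}(\delta)=\omega$. Nothing in the definition of a coherent sequence bounds $\mathrm{otp}(C_\delta)$; if, say, $\mathrm{otp}(C_\delta)=\omega^2$, then $C'_\delta$ is cofinal in $\delta$. So $T\cap S^\kappa_\omega$ need not be thin, and your ``for instance'' does not go through.

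The real gap is your Case B. You correctly isolate it as the crux, but what you offer there is a sketch (``repeatedly apply Fodor to $\delta\mapsto\min(C'_\delta\cap T\setminus\xi)$ \ldots\ shrinking family of stationary reservoirs \ldots\ delicate at limit stages \ldots\ I anticipate it requires careful bookkeeping''), not a proof. As written, the forward direction is incomplete.

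The paper sidesteps exactly this difficulty by \emph{not} insisting on an antichain. Its dichotomy is different from yours: either (Case~1) there is $\alpha_0$ such that $T^*=\{\alpha\in T\setminus(\alpha_0{+}1):C'_\alpha\subseteq\alpha_0{+}1\}$ is stationary---in which case $T^*$ is already a stationary antichain and any splitting works---or (Case~2) not. In Case~2 the paper does not hunt for an antichain at all; it shows directly that some ordinal $\alpha$ has both
\[
S^0_\alpha=\{\beta\in T\setminus(\alpha{+}1):\alpha\notin C'_\beta\}\quad\text{and}\quad S^1_\alpha=\{\beta\in T\setminus(\alpha{+}1):\alpha\in C'_\beta\}
\]
stationary, and takes $S_0=S^0_\alpha$, $S_1=S^1_\alpha$. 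Coherence immediately gives the separation: if $\beta_0\in C'_\gamma\cap S_0$ and $\beta_1\in C'_\gamma\cap S_1$, then $C'_{\beta_i}=C'_\gamma\cap\beta_i$, forcing $\alpha\in C'_\gamma$ and $\beta_0\le\alpha$, contradicting $\beta_0>\alpha$.

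The existence of such an $\alpha$ is where non-threadability enters, and here the paper's packaging is much lighter than the transfinite recursion you envisage. If no $\alpha$ works, let $A=\{\alpha:S^0_\alpha\text{ is nonstationary}\}$. A single Fodor application (to $\beta\mapsto\min(C'_\beta\setminus\alpha_0)$ on the stationary set guaranteed by Case~2) shows $A$ is unbounded. And any two $\alpha<\alpha'$ in $A$ automatically satisfy $C_\alpha=C_{\alpha'}\cap\alpha$: pick $\beta$ in the intersection of the two clubs witnessing nonstationarity of $S^0_\alpha,S^0_{\alpha'}$, so $\alpha,\alpha'\in C'_\beta$. Hence $\bigcup_{\alpha\in A}C_\alpha$ is a thread---a contradiction obtained in one step, with no limit-stage bookkeeping.

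So the moral is that asking for a full stationary antichain is stronger than what is needed, and the paper's direct choice of $S_0,S_1$ in Case~2 avoids precisely the transfinite construction you flagged as the obstacle.
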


\begin{proof}
 First, suppose that there are stationary sets $S_0$, $S_1 \subseteq \kappa$ such that for every $\alpha \in \mathrm{lim}(\kappa)$, $C'_\alpha \cap S_0 = \emptyset$ or $C'_\alpha \cap S_1 = \emptyset$ and suppose for sake of contradiction that there is a club $D$ in $\kappa$ that threads $\overrightarrow{C}$. Since $S_0$ and $S_1$ are stationary, there are $\alpha<\beta<\gamma$ in $D'$ such that $\alpha \in S_0$ and $\beta \in S_1$. Since $D\cap \gamma = C_\gamma$, we have $\alpha, \beta \in C'_\gamma$, contradicting the fact that $C'_\gamma \cap S_0$ or $C'_\gamma \cap S_1$ is empty. Thus, $\overrightarrow{C}$ is a $\square(\kappa)$-sequence.

Next, suppose that $\overrightarrow{C}$ is a $\square(\kappa)$-sequence and that $T\subseteq \kappa$ is stationary. There are two cases:

{\bf Case 1:} There is $\alpha_0 < \kappa$ such that $\{\alpha \in T \mid C'_\alpha \cap (\alpha_0, \kappa) \not= \emptyset \}$ is nonstationary. Let $T^* = \{\alpha \in T\setminus (\alpha_0+1) \mid C'_\alpha \cap (\alpha_0, \kappa) = \emptyset \}$ and let $T^*=S_0 \cup S_1$ be any partition of $T^*$ into disjoint stationary sets. Then $S_0$ and $S_1$ are as desired.

{\bf Case 2:} For all $\alpha_0 < \kappa$, $\{\alpha \in T \mid C'_\alpha \cap (\alpha_0, \kappa) \not= \emptyset \}$ is stationary. For $\alpha \in \mathrm{lim}(\kappa)$, let $S^0_\alpha = \{\beta \in T\setminus (\alpha +1) \mid \alpha \not\in C'_\beta \}$ and let $S^1_\alpha = \{\beta \in T\setminus (\alpha +1) \mid \alpha \in C'_\beta \}$. 
\begin{claim}
 There is $\alpha \in \mathrm{lim}(\kappa)$ such that $S^0_\alpha$ and $S^1_\alpha$ are both stationary.
\end{claim}
\begin{proof}
 Suppose this is not the case. Let $A$ be the set of $\alpha \in \mathrm{lim}(\kappa)$ such that $S^0_\alpha$ is nonstationary. We claim that $A$ is unbounded in $\kappa$. To show this, fix $\alpha_0 <\kappa$. By Fodor's Lemma, we can fix an $\alpha \geq \alpha_0$ and a stationary $T^* \subseteq T$ such that if $\beta \in T^*$, then $\alpha = \min (C'_\beta \setminus \alpha_0)$. Then $\alpha \in A$. Now let $\alpha < \alpha'$ be elements of $A$, and let $D_\alpha$ and $D_{\alpha'}$ be clubs in $\kappa$ disjoint from $S^0_\alpha$ and $S^0_{\alpha'}$, respectively. Fix $\beta \in D_\alpha \cap D_{\alpha'} \cap T$. Then $C_\alpha = C_\beta \cap \alpha$ and $C_{\alpha'} = C_\beta \cap \alpha'$, so $C_\alpha = C_{\alpha'} \cap \alpha$. Thus, $D = \bigcup_{\alpha \in A}C_\alpha$ is a thread through $\overrightarrow{C}$, contradicting the fact that $\overrightarrow{C}$ is a $\square(\kappa)$-sequence. 
\end{proof}

Now let $\alpha$ be such that $S^0_\alpha$ and $S^1_\alpha$ are both stationary. Let $S_0 = S^0_\alpha$ and $S_1 = S^1_\alpha$. It is routine to check that $S_0$ and $S_1$ are as desired. 
 
\end{proof}

The preceding observations lead us to make the following definition.

\begin{definition}
 Let $\kappa$ be a regular, uncountable cardinal, and let $S\subseteq \kappa$. $\overrightarrow{C}$ is a $\square(\kappa, S)$-sequence if $\overrightarrow{C}$ is a $\square(\kappa)$-sequence and $\overrightarrow{C}$ avoids $S$. $\square(\kappa, S)$ is the statement that a $\square(\kappa, S)$-sequence exists.
\end{definition}

We start our investigation of these intermediate square principles with the following simple observation.

\begin{proposition}
\label{prop33}
 Let $\mu<\kappa$ be infinite regular cardinals. The following are equivalent:
 \begin{enumerate}
   \item{$\square^\mu(\kappa)$}
   \item{There is a $\square(\kappa)$-sequence $\overrightarrow{C}$ such that $\{\alpha < \kappa \mid \mathrm{otp}(C_\alpha) = \mu \}$ is stationary.}
 \end{enumerate}
\end{proposition}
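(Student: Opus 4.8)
The plan is to prove the two implications separately, with the backward direction being essentially immediate and the forward direction requiring a pointwise modification of the given sequence.

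For $(2) \Rightarrow (1)$, suppose $\overrightarrow{C}$ is a $\square(\kappa)$-sequence with $B = \{\alpha < \kappa \mid \mathrm{otp}(C_\alpha) = \mu\}$ stationary. For $\alpha \in B$, since $C_\alpha$ is club in $\alpha$ of order type $\mu$ and $\mu$ is regular, $\mathrm{cf}(\alpha) = \mu$, so $\alpha \in S^\kappa_\mu$; moreover the only multiple of $\mu$ strictly below $\mu$ is $0$, so $C^{[\mu]}_\alpha = \{\min C_\alpha\}$ is bounded below $\alpha$. Hence $B$ witnesses directly that $\overrightarrow{C}$ is a $\square^\mu(\kappa)$-sequence.

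For $(1) \Rightarrow (2)$, I fix a $\square^\mu(\kappa)$-sequence $\overrightarrow{C}$ and let $A = \{\alpha \in S^\kappa_\mu \mid C^{[\mu]}_\alpha \text{ is bounded below } \alpha\}$, which is stationary. The first observation is that the marked sets cohere: for $\beta \in C'_\alpha$ we have $C_\beta = C_\alpha \cap \beta$, and each $\gamma < \beta$ occupies the same position in $C_\beta$ as in $C_\alpha$, so $C^{[\mu]}_\beta = C^{[\mu]}_\alpha \cap \beta$. I then define a new sequence $\overrightarrow{D}$ pointwise by distinguishing two cases. When $C^{[\mu]}_\alpha$ is bounded below $\alpha$, I set $D_\alpha = C_\alpha \cap (\max C^{[\mu]}_\alpha, \alpha)$, the final $\mu$-block of $C_\alpha$; when $C^{[\mu]}_\alpha$ is unbounded in $\alpha$, I set $D_\alpha$ to be the closure of $C^{[\mu]}_\alpha$ in $\alpha$. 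In either case $D_\alpha$ is club in $\alpha$. A short ordinal-arithmetic computation shows that for $\alpha \in A$ the requirement $\mathrm{cf}(\alpha) = \mu$ forces $\mathrm{otp}(C_\alpha)$ to be $\mu \cdot \zeta$ for a successor ordinal $\zeta$, so the final block has order type exactly $\mu$; thus $\{\alpha \mid \mathrm{otp}(D_\alpha) = \mu\} \supseteq A$ is stationary.

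The crux is verifying that $\overrightarrow{D}$ is coherent, and here the main subtlety — the reason for using the closure in the unbounded case — is that the case of a given $\alpha$ must propagate correctly to its limit points. Using $C^{[\mu]}_\beta = C^{[\mu]}_\alpha \cap \beta$, one checks that if $\alpha$ falls in the bounded case, then every $\beta \in D'_\alpha$ satisfies $\beta > \max C^{[\mu]}_\alpha$, whence $C^{[\mu]}_\beta = C^{[\mu]}_\alpha$ is again bounded and $D_\beta = D_\alpha \cap \beta$; and if $\alpha$ falls in the unbounded case, then $D'_\alpha = (C^{[\mu]}_\alpha)'$, every such $\beta$ is again in the unbounded case, and closure commutes with truncation to yield $D_\beta = D_\alpha \cap \beta$. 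I expect this case-propagation to be the main obstacle, since the naive truncation leaving $D_\alpha = C_\alpha$ at the unbounded points immediately violates coherence against a limit point $\beta$ with $\mathrm{otp}(C_\beta) = \mu$. Finally, $\overrightarrow{D}$ has no thread for free: a thread $D$ would satisfy $\mathrm{otp}(D_\delta) = \mathrm{otp}(D \cap \delta)$ for every $\delta \in D'$, and $\mathrm{otp}(D \cap \delta) > \mu$ for club-many $\delta$, contradicting the stationarity of $\{\alpha \mid \mathrm{otp}(D_\alpha) = \mu\}$. Hence $\overrightarrow{D}$ is a $\square(\kappa)$-sequence of the required form.
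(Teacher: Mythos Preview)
Your proof is correct and follows essentially the same construction as the paper: in both, the new sequence is defined as the tail $C_\alpha \setminus (\max C^{[\mu]}_\alpha + 1)$ in the bounded case and as $C^{[\mu]}_\alpha$ in the unbounded case. Your added closure in the unbounded case is harmless but unnecessary, since $C^{[\mu]}_\alpha$ is automatically closed below $\alpha$ (a limit of ordinals divisible by $\mu$ is again divisible by $\mu$); otherwise the two arguments coincide, yours simply supplying more of the verification detail that the paper leaves implicit.
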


\begin{proof}
 A $\square(\kappa)$-sequence as in 2. is clearly a $\square^\mu(\kappa)$ sequence. For the other direction, let $\overrightarrow{D}$ be a $\square^\mu(\kappa)$-sequence, and let $T = \{\alpha < \kappa \mid D^{[\mu]}_\alpha \mbox{ is bounded below } \alpha \}$. Form $\overrightarrow{C}$ as follows. If $\alpha \in \mathrm{lim}(\kappa) \setminus T$, let $C_\alpha = D^{[\mu]}_\alpha$. If $\alpha \in T$, let $C_\alpha = D_\alpha \setminus (\max(D^{[\mu]}_\alpha) + 1)$. It is easy to verify that $\overrightarrow{C}$ is a $\square(\kappa)$-sequence and, if $\alpha \in T \cap S^\kappa_\mu$, then $\mathrm{otp}(C_\alpha) = \mu$. 
\end{proof}

The remainder of the paper investigates the implications and non-implications that exist among the traditional square properties and those of the form $\square^\mu(\kappa)$ and $\square(\kappa, S)$. A diagram illustrating the complete picture when $\kappa = \omega_2$ can be found at the end of the paper.

\begin{proposition}
 Let $\mu<\kappa$ be infinite, regular cardinals. 
\begin{enumerate}
 \item {If $\square^\mu(\kappa)$ holds, then there is a stationary $S\subseteq S^\kappa_\mu$ such that $\square(\kappa, S)$ holds.}
 \item {If there is a stationary $S\subseteq S^\kappa_\omega$ such that $\square(\kappa, S)$ holds, then $\square^\omega(\kappa)$ holds.} 
\end{enumerate}

\end{proposition}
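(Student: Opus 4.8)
For part (1), the plan is to combine Proposition~\ref{prop33} with the observation that a $\square(\kappa)$-sequence whose clubs have order type exactly $\mu$ on a stationary set automatically avoids a stationary subset of $S^\kappa_\mu$. Concretely, I would apply Proposition~\ref{prop33} to obtain a $\square(\kappa)$-sequence $\overrightarrow{C}$ for which $T = \{\alpha < \kappa \mid \mathrm{otp}(C_\alpha) = \mu\}$ is stationary. Note that $T \subseteq S^\kappa_\mu$, since $\mathrm{otp}(C_\alpha) = \mu$ forces $\mathrm{cf}(\alpha) = \mu$. Now I want to find a stationary $S \subseteq S^\kappa_\mu$ that $\overrightarrow{C}$ avoids, i.e. $C'_\alpha \cap S = \emptyset$ for all limit $\alpha$. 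The key point is that for $\beta \in T$ we have $\mathrm{otp}(C_\beta) = \mu$, so every proper initial segment $C_\beta \cap \alpha$ has order type strictly less than $\mu$; hence if $\alpha \in C'_\beta$ then $\mathrm{otp}(C_\alpha) = \mathrm{otp}(C_\beta \cap \alpha) < \mu$ by coherence, so $\alpha \notin T$. This shows $T \cap C'_\beta = \emptyset$ for all $\beta \in T$. I still need avoidance for \emph{all} limit $\beta$, not just $\beta \in T$, so the cleaner route is to thin $T$ to a stationary $S$ on which avoidance holds universally: since any $\alpha$ appearing in some $C'_\beta$ satisfies $\mathrm{otp}(C_\alpha) = \mathrm{otp}(C_\beta) \cap \alpha$, and coherence makes $C_\alpha$ determined, I would set $S = T$ and verify directly that no $\alpha \in T$ can lie in any $C'_\beta$: if $\alpha \in C'_\beta$ then $C_\alpha = C_\beta \cap \alpha$ is a proper initial segment of $C_\beta$, but a club of order type $\mu$ (an indecomposable-type argument) cannot have a proper initial segment of order type $\mu$ unless $\beta$ itself is not a limit point above $\alpha$. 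I would make this precise using that $\mu$ is a cardinal, so $\mathrm{otp}(C_\beta \cap \alpha) < \mathrm{otp}(C_\beta) = \mu$ whenever $\alpha < \beta$. Thus $S = T$ witnesses $\square(\kappa, S)$.

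For part (2), the plan is to run the argument essentially in reverse, exploiting the hypothesis $S \subseteq S^\kappa_\omega$. Suppose $\overrightarrow{C}$ is a $\square(\kappa,S)$-sequence with $S \subseteq S^\kappa_\omega$ stationary and $C'_\alpha \cap S = \emptyset$ for all limit $\alpha$. I want to produce a $\square^\omega(\kappa)$-sequence, i.e. a $\square(\kappa)$-sequence for which $\{\alpha \in S^\kappa_\omega \mid C^{[\omega]}_\alpha \text{ is bounded below } \alpha\}$ is stationary. By Proposition~\ref{prop33} it suffices to arrange that $\{\alpha \mid \mathrm{otp}(C_\alpha) = \omega\}$ is stationary, which is automatic for $\alpha \in S$ since $\mathrm{cf}(\alpha) = \omega$ forces $\mathrm{otp}(C_\alpha) = \omega$ (any club in an ordinal of cofinality $\omega$ has order type $\omega$). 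So the sequence already has the right shape on $S$, and the content is that the avoidance hypothesis is what lets us conclude $\square^\omega(\kappa)$ rather than merely restate it.

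The main obstacle I anticipate is in part~(1): verifying that a single $\square(\kappa)$-sequence with stationarily many $C_\alpha$ of order type $\mu$ genuinely avoids a \emph{stationary} $S$, rather than just having the weaker non-reflecting behavior. The delicate point is that avoidance requires $C'_\alpha \cap S = \emptyset$ for \emph{every} limit ordinal $\alpha < \kappa$, including those $\alpha$ with $\mathrm{otp}(C_\alpha)$ much larger than $\mu$; I must check that no such large-order-type club can contain an element of $S$ as a limit point. This follows from coherence: if $\beta \in S$ and $\beta \in C'_\gamma$, then $C_\beta = C_\gamma \cap \beta$, which is fine, but the direction I need is that $\beta \in S$ never appears as a limit point, which I get from $\mathrm{otp}(C_\beta) = \mu$ combined with the fact that any $C_\gamma$ having $\beta$ as a limit point would force $\mathrm{otp}(C_\beta) < \mathrm{otp}(C_\gamma)$ with $C_\beta$ an initial segment — consistent, so the real work is choosing $S$ correctly. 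I expect the clean fix is to take $S$ to be the stationary set of $\alpha$ where $\mathrm{otp}(C_\alpha) = \mu$ and additionally $\mu \mid \mathrm{otp}(C_\alpha \cap \xi)$ fails to accumulate, making the $\mu$-indecomposability of the order type do the bookkeeping; I would spell out this accumulation-of-order-type computation as the one nontrivial calculation in the proof.
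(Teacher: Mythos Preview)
Your approach to part~(2) contains a genuine error: the claim that ``any club in an ordinal of cofinality $\omega$ has order type $\omega$'' is false. For example, $\omega^2$ is a club in itself of order type $\omega^2$, and more generally $D_\alpha$ for $\alpha \in S$ may have arbitrarily large order type of cofinality $\omega$, in which case $D^{[\omega]}_\alpha$ is unbounded in $\alpha$. So the given $\square(\kappa,S)$-sequence need not already be a $\square^\omega(\kappa)$-sequence. The paper instead \emph{modifies} the sequence: for $\alpha \in S$ it replaces $D_\alpha$ by an arbitrary cofinal $\omega$-sequence in $\alpha$, and for $\alpha \notin S$ it keeps $D_\alpha$. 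The point is that avoidance of $S$ guarantees no $\alpha \in S$ is ever a limit point of any $D_\gamma$, so there is no coherence constraint on the new $C_\alpha$, and the resulting sequence is coherent with $\mathrm{otp}(C_\alpha) = \omega$ on the stationary set $S$.

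For part~(1) you correctly identify the difficulty but do not resolve it. Taking $S = T = \{\alpha : \mathrm{otp}(C_\alpha) = \mu\}$ for the sequence produced by Proposition~\ref{prop33} does \emph{not} work: if $\gamma$ lies outside the set used in that construction (so $C_\gamma = D^{[\mu]}_\gamma$), then a limit point $\alpha$ of $C_\gamma$ can perfectly well satisfy $\mathrm{otp}(C_\alpha) = \mathrm{otp}(C_\gamma \cap \alpha) = \mu$, placing $\alpha \in T \cap C'_\gamma$. Your proposed ``fix'' (adding the condition that $\mu$-divisible positions fail to accumulate) is vacuous once $\mathrm{otp}(C_\alpha) = \mu$. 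The paper's solution is to take $S$ to be $S^\kappa_\mu$ intersected with the set $T$ \emph{from the construction} in Proposition~\ref{prop33}, namely $\{\alpha : D^{[\mu]}_\alpha \text{ is bounded below } \alpha\}$ for the \emph{original} $\square^\mu(\kappa)$-sequence $\overrightarrow{D}$. One then verifies avoidance by cases on whether $\gamma$ lies in this set: if it does, every limit point of $C_\gamma$ has cofinality $< \mu$; if it does not, every limit point of $C_\gamma = D^{[\mu]}_\gamma$ inherits an unbounded $D^{[\mu]}$ and hence lies outside $S$.
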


\begin{proof}
 First, suppose that $\overrightarrow{C}$ is a $\square^\mu(\kappa)$-sequence. Let $T=\{\alpha < \kappa \mid C^{[\mu]}_\alpha$ is bounded below $\alpha \}$. It is easily seen that the construction from the proof of Proposition \ref{prop33} yields a $\square(\kappa, S)$-sequence, where $S=T\cap S^\kappa_\mu$.

Next, suppose that $S\subseteq S^\kappa_\omega$ is stationary and that $\overrightarrow{D}$ is a $\square(\kappa, S)$-sequence. For $\alpha <\kappa$, define $C_\alpha$ as follows: If $\alpha \not\in S$, let $C_\alpha=D_\alpha$. If $\alpha \in S$, let $C_\alpha$ be any set of order type $\omega$ unbounded in $\alpha$. Since $\overrightarrow{D}$ avoids $S$, this does not interfere with the coherence of the sequence, so $\overrightarrow{C}$ is a $\square^\omega(\kappa)$-sequence. 
\end{proof}

\begin{proposition}
 Let $\mu<\nu<\kappa$ be infinite, regular cardinals. If $\square^\nu(\kappa)$ holds, then $\square^\mu(\kappa)$ holds.
\end{proposition}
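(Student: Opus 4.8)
My plan is to imitate the two-step strategy behind the preceding proposition — pass from $\square^\nu(\kappa)$ to a $\square(\kappa)$-sequence together with a stationary set it treats nicely, and then recover a $\square^\mu(\kappa)$-sequence — but to carry the second step out for $\mu > \omega$, where the naive insertion that works for $\mu = \omega$ breaks down.

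First, by Proposition \ref{prop33} I fix a $\square(\kappa)$-sequence $\overrightarrow{C}$ for which $S_0 = \{\alpha \mid \mathrm{otp}(C_\alpha) = \nu\}$ is stationary; note $S_0 \subseteq S^\kappa_\nu$. The basic source of order-type-$\mu$ material is the following: for $\alpha \in S_0$, write $C_\alpha = \langle c^\alpha_\xi \mid \xi < \nu\rangle$ in increasing order; then $c^\alpha_\mu \in C'_\alpha$, so coherence gives $C_{c^\alpha_\mu} = C_\alpha \cap c^\alpha_\mu$, a club of order type exactly $\mu$. Thus $\overrightarrow{C}$ already contains order-type-$\mu$ clubs, and, crucially, the family $\langle C_{c^\alpha_\mu} \mid \alpha \in S_0\rangle$ is automatically coherent, being a subfamily of $\overrightarrow{C}$. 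If the set $\{\gamma \mid \mathrm{otp}(C_\gamma) = \mu\}$ were stationary we would already be done, so the whole problem is to arrange that the order-type-$\mu$ points form a stationary set.

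The geometric fact that makes this feasible is that a club of order type $\mu$ has all of its limit points of cofinality $< \mu$ (as $\mu$ is regular). Hence, for a target set $T \subseteq S^\kappa_\mu$, the limit points of any order-type-$\mu$ clubs installed at points of $T$ automatically lie off $T$, and the only coherence obligations produced live at cofinality-$<\mu$ points. This is exactly the phenomenon that trivialises the $\mu = \omega$ case of the preceding proposition — there the installed clubs have no limit points at all — and my plan is to exploit it for $\mu > \omega$ by building, by recursion on $\kappa$, a single $\square(\kappa)$-sequence $\overrightarrow{D}$ which agrees with $\overrightarrow{C}$ on $S_0$, so that any thread of $\overrightarrow{D}$ would meet $S_0$ stationarily and hence thread $\overrightarrow{C}$, ruling out threads, while on a stationary $T \subseteq S^\kappa_\mu$ it carries clubs of order type $\mu$ obtained from the coherent segments $C_{c^\alpha_\mu}$ supplied by the order-type-$\nu$ clubs. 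The point of assuming $\square^\nu$ rather than bare $\square(\kappa)$ is precisely to furnish these coherent order-type-$\mu$ segments cofinally, giving the recursion the room it needs to lay down order-type-$\mu$ clubs and to glue them at their low-cofinality limit points.

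The step I expect to be the main obstacle is securing stationarity of the order-type-$\mu$ points while maintaining coherence. The difficulty is that the assignment $\alpha \mapsto c^\alpha_\mu$ is regressive, so its image need not be stationary; I therefore cannot simply read the witnessing stationary set off $\overrightarrow{C}$, and must instead engineer the recursion so that order-type-$\mu$ clubs get installed at stationarily many cofinality-$\mu$ points. This demands a stationary supply of points that can be freed — made non-limit-points of the sequence — together with their cofinality-$<\mu$ scaffolding, and then careful bookkeeping to install coherent order-type-$\mu$ clubs there without creating a thread and without the recursion getting stuck at limits of large cofinality, the last being handled by using the order-type-$\nu$ clubs as a backbone along which limits can always be taken. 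Verifying that this bookkeeping can be arranged simultaneously with coherence, threadlessness, and stationarity of $T$ is where essentially all of the work resides.
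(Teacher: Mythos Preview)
Your proposal is a plan, not a proof: you sketch a recursion, flag its obstacles, and then concede that ``essentially all of the work resides'' in the part you leave undone. That is a genuine gap, and the recursion you describe---freeing up stationarily many cofinality-$\mu$ points, installing coherent order-type-$\mu$ clubs there, and gluing at limits using the order-type-$\nu$ backbone---is not obviously executable as stated; you give no mechanism for choosing which points to free or for ensuring the resulting family stays coherent with the untouched part of $\overrightarrow{C}$.

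The missing idea, which short-circuits the recursion entirely, is Fodor's Lemma applied to order types. You have in effect already noticed that inside each $C_\beta$ with $\beta \in S_0$ there are cofinality-$\mu$ limit points $\alpha$ with $\mathrm{otp}(C_\alpha) < \nu$; this shows directly that $T_1 = \{\alpha \in S^\kappa_\mu \mid \mathrm{otp}(C_\alpha) < \nu\}$ is stationary (given a club $E$, pick $\beta \in E' \cap S_0$ and then any $\alpha \in (E \cap C_\beta)' \cap S^\kappa_\mu$). Now apply Fodor to the regressive map $\alpha \mapsto \mathrm{otp}(C_\alpha)$ on $T_1$: you obtain a stationary $S \subseteq T_1$ and a single ordinal $\gamma < \nu$ with $\mathrm{otp}(C_\alpha) = \gamma$ for every $\alpha \in S$. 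Because the order types are \emph{constant} on $S$, you can thin every $C_\alpha$ ($\alpha \in S$) in exactly the same way---fix a club $\langle \gamma_\xi \mid \xi < \mu\rangle$ in $\gamma$ and let $D_\alpha$ pick out the points of $C'_\alpha$ at those positions---and coherence among the $D_\alpha$ for $\alpha \in S$ is automatic, since the thinning depends only on position within $C_\alpha$. Patching the rest of $\kappa$ is then a finite case split (limit points of some $D_{\alpha'}$ with $\alpha' \in S$; limit points of some $C_{\alpha'}$ but no $D_{\alpha'}$; points having some $\alpha' \in S$ as a limit point; everything else), with no recursion and no bookkeeping. Threadlessness is immediate: a thread would meet $S$ at two points $\alpha < \alpha'$, forcing $\mu = \mathrm{otp}(D_\alpha) < \mathrm{otp}(D_{\alpha'}) = \mu$.
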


\begin{proof}
 Assume $\square^\nu(\kappa)$ holds, and fix a $\square(\kappa)$-sequence $\overrightarrow{C}$ such that $T_0 = \{\alpha < \kappa \mid \mathrm{otp}(C_\alpha) = \nu \}$ is stationary. We claim that $T_1 = \{\alpha \in S^\kappa_\mu \mid \mathrm{otp}(C_\alpha) < \nu \}$ is also stationary. To see this, let $E$ be club in $\kappa$. Let $\beta \in E'\cap T_0$. Then $E\cap C_\beta$ is club in $\beta$. Let $\alpha \in (E\cap C_\beta)' \cap S^\kappa_\mu$. Then, since $C_\alpha = C_\beta \cap \alpha$, $\mathrm{otp}(C_\alpha)<\nu$, so $\alpha \in E\cap T_1$.
 
 We can now apply Fodor's Lemma to $T_1 \setminus \nu$ to find a $\gamma < \nu$ and a stationary $S\subseteq T_1$ such that $\alpha \in S$ implies $\mathrm{otp}(C_\alpha) = \gamma$. Let $\langle \gamma_\xi \mid \xi < \mu \rangle$ enumerate a club in $\gamma$ with each $\gamma_\xi$ a limit ordinal (this will not be possible if $\mu = \omega$ and $\gamma$ is not a limit of limit ordinals, but in that case $\overrightarrow{C}$ is already a $\square^\mu(\kappa)$-sequence).

We now define a $\square^\mu(\kappa)$-sequence $\overrightarrow{D}$ (in fact, $\overrightarrow{D}$ will also be a $\square(\kappa, S)$-sequence). First, if $\alpha \in S$, let $D_\alpha = \{\beta \in C'_\alpha \mid \mbox{for some } \nu<\mu, \mbox{ otp}(C_\alpha \cap \beta)=\gamma_\nu \}$. Next, if $\alpha \in D'_{\alpha'}$ for some $\alpha' \in S$, let $D_\alpha = D_{\alpha'} \cap \alpha$. Note that this is well-defined. If $\alpha \in C'_{\alpha'}$ for some $\alpha' \in S$ but, for all $\beta \in S$, $\alpha \not\in D'_\beta$, then let $D_\alpha = C_\alpha \setminus \max(D_{\alpha'} \cap \alpha)$. Note again that this is well-defined.

If there is $\alpha' \in S$ such that $\alpha' \in C'_\alpha$ (note that such an $\alpha'$ must be unique), then let $D_\alpha = C_\alpha \setminus \alpha'$. In all other cases, let $D_\alpha = C_\alpha$. It is now easy to verify that $\overrightarrow{D}$ is a $\square(\kappa)$-sequence and, since $\alpha \in S$ implies that $\mathrm{otp}(D_\alpha)=\mu$, that it is in fact a $\square^\mu(\kappa)$-sequence.
\end{proof}

The following corollary is now immediate.

\begin{corollary}
 \begin{enumerate}
  \item{Let $\mu < \nu < \kappa$ be infinite, regular cardinals. If $\square^\nu(\kappa)$ holds, then there is a stationary $S\subseteq S^\kappa_\mu$ such that $\square(\kappa, S)$ holds.}
  \item{Let $\mu \leq \kappa$ be infinite, regular cardinals, with $\mu$ regular. If $\square_\kappa$ holds, then $\square^\mu(\kappa^+)$ holds.}
 \end{enumerate}
\end{corollary}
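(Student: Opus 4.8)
The corollary has two parts, each of which should follow quickly from results already established in the excerpt.

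For part (1), the plan is to combine the preceding proposition (that $\square^\nu(\kappa)$ implies $\square^\mu(\kappa)$ for $\mu < \nu < \kappa$) with the first clause of Proposition 3.5 (that $\square^\mu(\kappa)$ yields a stationary $S \subseteq S^\kappa_\mu$ with $\square(\kappa, S)$). Given $\square^\nu(\kappa)$, I would first apply the proposition to descend to $\square^\mu(\kappa)$, and then feed this into Proposition 3.5(1) to extract the desired stationary $S \subseteq S^\kappa_\mu$ with $\square(\kappa, S)$. This is a one-line chaining of two earlier results and requires no new argument.

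For part (2), the goal is to show that $\square_\kappa$ implies $\square^\mu(\kappa^+)$ for any regular $\mu \leq \kappa$. Here the natural strategy is to start from a $\square_\kappa$-sequence $\langle C_\alpha \mid \alpha \in \mathrm{lim}(\kappa^+) \rangle$, which in particular is coherent and has $\mathrm{otp}(C_\alpha) \leq \kappa$ for all $\alpha$, so it is a fortiori a $\square(\kappa^+)$-sequence. By Proposition 3.3, it suffices to produce a $\square(\kappa^+)$-sequence for which $\{\alpha < \kappa^+ \mid \mathrm{otp}(C_\alpha) = \mu\}$ is stationary. The key observation is that for a genuine $\square_\kappa$-sequence, the set $\{\alpha \in S^{\kappa^+}_\mu \mid \mathrm{otp}(C_\alpha) = \mu\}$ is already stationary: intersecting any club $E$ with $C_\beta$ for a large $\beta$ and taking a limit point $\alpha$ of $E \cap C_\beta$ of cofinality $\mu$ forces $\mathrm{otp}(C_\alpha) = \mu$ by coherence, exactly as in the stationarity argument in the proof of the preceding proposition. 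Thus the $\square_\kappa$-sequence witnesses $\square^\mu(\kappa^+)$ directly.

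The main (and only real) obstacle is the degenerate case $\mu = \omega$ together with the possibility that the order types $\mathrm{otp}(C_\alpha)$ at points of countable cofinality are limits of successor ordinals in a way that obstructs hitting order type exactly $\omega$; but since $\square_\kappa$ allows us to thin each $C_\alpha$ at a point $\alpha \in S^{\kappa^+}_\omega$ to an $\omega$-sequence cofinal in $\alpha$ (coherence is unaffected below such $\alpha$ because $\alpha$ has cofinality $\omega$ and hence is not a limit point of any $C_\beta$ with $\beta$ of higher cofinality in a way that matters — the same mechanism used in Proposition 3.4), this is handled exactly as the $\mu = \omega$ case in the proof of Proposition 3.4. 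I therefore expect the entire corollary to be immediate from the cited results, and would simply remark that both parts follow by combining the previous proposition with Proposition 3.5 and the observation on stationarity of order-type-$\mu$ points in a $\square_\kappa$-sequence.
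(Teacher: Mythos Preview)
Your argument for part~(1) is correct and matches the paper's intent: chain the preceding proposition ($\square^\nu(\kappa) \Rightarrow \square^\mu(\kappa)$) with the earlier result that $\square^\mu(\kappa)$ yields a stationary $S \subseteq S^\kappa_\mu$ with $\square(\kappa, S)$.

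For part~(2) there is a slip. You assert that for a $\square_\kappa$-sequence $\overrightarrow{C}$ the set $\{\alpha \in S^{\kappa^+}_\mu : \mathrm{otp}(C_\alpha) = \mu\}$ is already stationary, arguing that a limit point $\alpha$ of $E \cap C_\beta$ of cofinality $\mu$ must satisfy $\mathrm{otp}(C_\alpha) = \mu$. But coherence only gives $C_\alpha = C_\beta \cap \alpha$, so $\mathrm{otp}(C_\alpha)$ is merely some ordinal less than $\mathrm{otp}(C_\beta) \leq \kappa$ of cofinality $\mu$; it could equally well be $\mu \cdot 2$ or $\mu^2$. This is precisely why the proof of the preceding proposition needs Fodor's Lemma and then a modification of the sequence, rather than stopping at the stationarity step. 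So the original $\square_\kappa$-sequence does \emph{not} witness $\square^\mu(\kappa^+)$ directly in the sense of Proposition~\ref{prop33}.

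The clean fix, and the reason the paper calls the corollary ``immediate'' and gives no proof, is to go through the top: a $\square_\kappa$-sequence automatically has $\mathrm{otp}(C_\alpha) = \kappa$ for every $\alpha \in S^{\kappa^+}_\kappa$ (since $\mathrm{otp}(C_\alpha) \leq \kappa$ and $\mathrm{cf}(\alpha) = \kappa$), so by Proposition~\ref{prop33} it witnesses $\square^\kappa(\kappa^+)$. Now apply the preceding proposition with $\nu = \kappa$ to get $\square^\mu(\kappa^+)$ for every regular $\mu < \kappa$; the case $\mu = \kappa$ is already handled. This avoids redoing the Fodor-and-modify argument by hand.
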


We now show that the above implications are generally not reversible. We begin by recalling the definition of the forcing poset that adds a $\square(\kappa)$ sequence by specifying its initial segments.

\begin{definition}
 Let $\kappa$ be a regular cardinal. $\mathbb{Q}(\kappa)$ is the partial order whose elements are of the form $q=\langle C^q_\alpha \mid \alpha \leq \beta^q \rangle$, where
\begin{enumerate}
 \item {$\beta^q<\kappa$.}
 \item {For all $\alpha \leq \beta^q$, $C^q_\alpha$ is a club in $\alpha$.}
 \item {For all $\alpha < \alpha' \leq \beta^q$, if $\alpha \in C'_{\alpha'}$, then $C_\alpha = C_{\alpha'} \cap \alpha$.}
\end{enumerate}
$p\leq q$ if and only if $p$ end-extends $q$, i.e. $\beta^p \geq \beta^q$ and, for all $\alpha \leq \beta^q$, $C^p_\alpha = C^q_\alpha$.
\end{definition}

\begin{proposition}
$\mathbb{Q}(\kappa)$ is $\kappa$-strategically closed.
\end{proposition}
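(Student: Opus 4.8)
The plan is to exhibit an explicit winning strategy for Player II in the game $G_\kappa(\mathbb{Q}(\kappa))$, following the same pattern as the proof that $\mathbb{Q}$ is $(\kappa+1)$-strategically closed. Throughout a play $\langle p_\alpha \mid \alpha < \kappa \rangle$, Player II maintains the set $E_\gamma = \{\beta^{p_\alpha} \mid \alpha < \gamma \text{ is an even or limit ordinal}\}$ of the top points of the conditions she has played so far, together with the inductive hypotheses that (i) the top points of Player II's conditions are strictly increasing and (ii) $E_\gamma$ is closed beneath its supremum. The governing idea is that Player II only ever creates a new limit point of her own sequence of top points at a stage at which she herself is moving, so that she can fix the club there coherently at the same time.

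Player II's strategy is as follows. At a successor (even) stage $\gamma = \gamma'+1$, after Player I has played $p_{\gamma'}$, she keeps every $C^{p_{\gamma'}}_\alpha$ for $\alpha \leq \beta^{p_{\gamma'}}$ and extends by a single point, setting $\beta^{p_\gamma} = \beta^{p_{\gamma'}}+1$ and $C^{p_\gamma}_{\beta^{p_\gamma}} = \{\beta^{p_{\gamma'}}\}$. Since $\beta^{p_\gamma}$ is then a successor ordinal, $(C^{p_\gamma}_{\beta^{p_\gamma}})' = \emptyset$, so no new instance of the coherence requirement is triggered; crucially, this guarantees that every top point Player II contributes at a successor stage is a successor ordinal. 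At a limit stage $\gamma < \kappa$, she sets $\delta = \sup_{\alpha<\gamma}\beta^{p_\alpha}$; by the regularity of $\kappa$ we have $\delta < \kappa$, and by the inductive hypotheses $E_\gamma$ is a club in $\delta$. She retains every previously determined $C_\alpha$ for $\alpha < \delta$ and sets $\beta^{p_\gamma} = \delta$ and $C^{p_\gamma}_\delta = E_\gamma$.

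The one point that requires genuine care, and which I expect to be the main obstacle, is verifying coherence of $C^{p_\gamma}_\delta$ at a limit stage, i.e.\ that $C_\alpha = E_\gamma \cap \alpha$ for every $\alpha \in (E_\gamma)'$. Here the inductive hypotheses pay off: a limit point $\alpha$ of $E_\gamma$ is the supremum of an increasing, cofinal-in-$\alpha$ sequence of earlier Player II top points, which occurs precisely at some earlier limit stage $\alpha_0 < \gamma$ at which Player II set $\beta^{p_{\alpha_0}} = \alpha$ and $C_\alpha = E_{\alpha_0}$. Since the top points are strictly increasing, $E_\gamma \cap \alpha = E_{\alpha_0} = C_\alpha$, which is exactly the required coherence; in particular, the successor-stage top points, being successor ordinals, never occur as such limit points and hence never cause trouble. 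One then checks routinely that the inductive hypotheses are preserved (the only new limit point of $E_\gamma$ at a limit stage is the one handled above) and that at each even stage Player II's move is a legal condition lying below all previous ones. Because $\delta < \kappa$ at every limit stage, Player II can move at every even stage $\alpha < \kappa$, so she wins $G_\kappa(\mathbb{Q}(\kappa))$, and therefore $\mathbb{Q}(\kappa)$ is $\kappa$-strategically closed. The successor-ordinal trick for Player II's non-limit moves is precisely what tames the limit-stage coherence verification.
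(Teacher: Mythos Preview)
Your proof is correct and follows a legitimate variant of the paper's argument. Both maintain the inductive hypothesis that the set $E_\gamma$ of Player II's top points is closed below its supremum, and both verify coherence at limit stages via this closure; the difference lies in how Player II plays. The paper sets the top point at an even successor stage to $\beta^{q_{\alpha'}}+\omega$ and takes $C^{q_\alpha}_{\beta^{q_\alpha}} = C^{q_{\alpha''}}_{\beta^{q_{\alpha''}}}\cup\{\beta^{q_{\alpha''}}\}\cup\{\beta^{q_{\alpha'}}+n:n<\omega\}$, so that each earlier even top point becomes a \emph{limit point} of every later top club; at a limit stage the top club is then the union $\bigcup_{\zeta\in E_\alpha} C_\zeta$, and coherence follows from this stronger hypothesis. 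You instead make successor-stage top points successor ordinals (so no nontrivial club is needed there), and at a limit stage simply take $C_\delta = E_\gamma$; coherence then reduces to the observation that limit points of $E_\gamma$ arise only at earlier limit stages, where the same recipe was applied. Your route is slightly more direct for the bare strategic-closure statement; the only caveat is that it reads the definition of $\mathbb{Q}(\kappa)$ as permitting successor $\beta^q$, which is harmless (and if one prefers limit top points, replacing $\beta^{p_{\gamma'}}+1$ by $\beta^{p_{\gamma'}}+\omega$ with $C=\{\beta^{p_{\gamma'}}+n:n<\omega\}$ leaves your argument intact, since this club still has no limit points). The paper's construction, by contrast, threads the top clubs coherently through $E_\gamma$, which is precisely what is reused verbatim in the proof that $\mathbb{Q}*\dot{\mathbb{T}}$ has a $\kappa$-closed dense subset.
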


\begin{proof}
 We specify a winning strategy for Player II in $G_\kappa(\mathbb{Q}(\kappa))$. First, let $q_0 = \emptyset$. Let $0 < \alpha < \kappa$ be even and suppose that $\langle q_\delta \mid \delta < \alpha \rangle$ has already been played. We specify Player II's next move, $q_\alpha$. Let $E_\alpha = \{\beta^{q_\delta} \mid \delta < \alpha \mbox{ is even} \}$ and suppose that we have satisfied the following inductive hypotheses:
\begin{enumerate}
 \item {$E_\alpha$ is closed below its supremum.}
 \item {For all even ordinals $\delta < \xi < \alpha$, $\beta^{q_\delta}<\beta^{q_{\xi}}$ and $\beta^{q_\delta} \in (C^{q_\xi}_{\beta^{q_{\xi}}})'$.}
\end{enumerate}

First, suppose that $\alpha$ is a successor ordinal. Since it is even, it is in fact a double successor. Let $\alpha = \alpha'+1 = \alpha''+2$. In this case, let $\beta^{q_\alpha}=\beta^{q_{\alpha'}} + \omega$. For limit ordinals $\zeta<\beta^{q_\alpha}$, let $C^{q_\alpha}_\zeta = C^{q_{\alpha'}}_\zeta$, and let \[C^{q_\alpha}_{\beta^{q_\alpha}} = C^{q_\alpha}_{\beta^{q_{\alpha''}}} \cup \{\beta^{q_{\alpha''}}\} \cup \{\beta^{q_{\alpha'}}+n \mid n < \omega \}.\]

Next, suppose that $\alpha$ is a limit ordinal. Let $\beta^{q_\alpha}=\sup(\{\beta^{q_\delta} \mid \delta < \alpha\})$. For limit ordinals $\zeta < \beta^{q_\alpha}$, find $\delta < \alpha$ such that $\zeta \leq \beta^{q_\delta}$ and let $C^{q_\alpha}_\zeta = C^{q_\delta}_\zeta$. Note that this is well-defined. Let \[C^{q_\alpha}_{\beta^{q_\alpha}}= \bigcup_{\zeta \in E_\alpha} C^{q_\alpha}_\zeta.\] By our inductive hypotheses, this is a club in $\beta^{q_\alpha}$ and satisfies the coherence requirements.

It is clear that this procedure produces a valid condition $q_\alpha \in \mathbb{Q}(\kappa)$ that is a lower bound for $\langle q_\delta \mid \delta < \alpha \rangle$ and maintains the inductive hypotheses. Thus, $\mathbb{Q}(\kappa)$ is $\kappa$-strategically  closed. 
\end{proof}

An argument similar to the proof of the previous proposition shows that, for every $\alpha<\kappa$, the set $\{q \mid \beta^q \geq \alpha \}$ is dense in $\mathbb{Q}(\kappa)$.

\begin{corollary}
 Forcing with $\mathbb{Q}(\kappa)$ preserves all cardinals $\leq \kappa$ and adds a coherent sequence $\langle C_\alpha \mid \alpha < \kappa \rangle$. In addition, if $\kappa^{<\kappa}=\kappa$, then all cardinals are preserved.
\end{corollary}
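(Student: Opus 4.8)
The plan is to prove the three assertions separately, drawing on the two structural features of $\mathbb{Q}(\kappa)$ that are now available: its $\kappa$-strategic closure (just established) and, under the extra hypothesis $\kappa^{<\kappa}=\kappa$, a bound on its cardinality. The whole statement is a routine corollary, so the aim is to record the standard arguments rather than to do anything delicate.

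First I would derive preservation of cardinals $\leq\kappa$ from $<\kappa$-distributivity. The key point is that $\kappa$-strategic closure implies that $\mathbb{Q}(\kappa)$ adds no new sequence of ordinals of length $<\kappa$. To see this, fix $\mu<\kappa$, a name $\dot f$ for a function from $\mu$ into the ordinals, and a condition $p$. Let Player II follow a winning strategy for $G_\kappa(\mathbb{Q}(\kappa))$ below $p$, and have Player I, at each odd stage $2\xi+1$, play a condition deciding $\dot f(\xi)$. Since $\mu<\kappa$, Player II survives through stage $\mu$, and at that (limit or successor) stage produces a condition below $p$ deciding all of $\dot f$; hence $\dot f$ is forced to name a ground-model function. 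Consequently a new surjection from some $\mu<\nu\leq\kappa$ onto $\nu$ cannot exist, as it would be a new sequence of length $\mu<\kappa$, so no cardinal $\leq\kappa$ is collapsed.

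Next I would exhibit the generic coherent sequence. Given a generic $G$, for each $\alpha<\kappa$ set $C_\alpha=C^q_\alpha$ for any $q\in G$ with $\beta^q\geq\alpha$; this is well-defined because $\{q\mid \beta^q\geq\alpha\}$ is dense (as noted just before the corollary) and because the conditions of $G$ are pairwise compatible and ordered by end-extension, so they agree on $C_\alpha$. Clauses (2) and (3) in the definition of $\mathbb{Q}(\kappa)$ then transfer directly to $\langle C_\alpha\mid \alpha<\kappa\rangle$, making it a coherent sequence. Finally, under $\kappa^{<\kappa}=\kappa$, a condition is determined by $\beta^q<\kappa$ together with a sequence $\langle C^q_\alpha\mid \alpha\leq\beta^q\rangle$ of bounded subsets of $\kappa$, so a routine cardinal computation using $\kappa^{<\kappa}=\kappa$ yields $|\mathbb{Q}(\kappa)|=\kappa$; by the same density it is exactly $\kappa$. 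Thus $\mathbb{Q}(\kappa)$ has the $\kappa^+$-chain condition and preserves all cardinals $\geq\kappa^+$, which together with the first step gives preservation of all cardinals.

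The size count and the coherence verification are entirely routine; the only point requiring attention is the passage from $\kappa$-strategic closure to $<\kappa$-distributivity. Here one must notice that only $\kappa$-strategic closure is available, not the $(\kappa+1)$-strategic closure appearing in the earlier Fact, so the diagonalization against $\dot f$ must be arranged to terminate strictly before stage $\kappa$. This is exactly what the hypothesis $\mathrm{dom}(\dot f)=\mu<\kappa$ permits, which is why we obtain preservation of cardinals up to and including $\kappa$ but no claim about new $\kappa$-sequences.
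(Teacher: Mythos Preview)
Your proof is correct and follows essentially the same approach as the paper's: deduce $<\kappa$-distributivity from $\kappa$-strategic closure to preserve cardinals $\leq\kappa$, read off the coherent sequence from the generic using the density of $\{q\mid\beta^q\geq\alpha\}$, and count conditions under $\kappa^{<\kappa}=\kappa$ to get the $\kappa^+$-c.c. The paper merely asserts that $\kappa$-strategic closure prevents new $<\kappa$-sequences, whereas you spell out the diagonalization against a name $\dot f$; your closing remark about needing only a run of length $<\kappa$ (so that $\kappa$-strategic closure, rather than $(\kappa+1)$-strategic closure, suffices) is a nice clarification the paper omits.
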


\begin{proof}
 Let $G$ be $\mathbb{Q}(\kappa)$-generic over $V$. Since $\mathbb{Q}(\kappa)$ is $\kappa$-strategically closed, it doesn't add any $<\kappa$-sequences of ordinals and hence preserves all cardinals $\leq \kappa$. Since $\{q \mid \beta^q \geq \alpha \}$ is dense in $\mathbb{Q}(\kappa)$ for every $\alpha < \kappa$, we can define $C_\alpha = C^q_\alpha$, where $q \in G$ and $\beta^q \geq \alpha$. It is clear that $\overrightarrow{C}=\langle C_\alpha \mid \alpha < \kappa \rangle$ is well-defined and a coherent sequence. Finally, if $\kappa^{<\kappa}$, then $|\mathbb{Q}(\kappa)| = \kappa$. Thus, $\mathbb{Q}(\kappa)$ has the $\kappa^{+}$-c.c. and preserves all cardinals $\geq \kappa^{+}$. 
\end{proof}

\begin{lemma}
 If $\mu<\kappa$ are regular cardinals and $G$ is $\mathbb{Q}(\kappa)$-generic over $V$, then the coherent sequence $\overrightarrow{C}$ added by $G$ is a $\square^\mu(\kappa)$-sequence.
\end{lemma}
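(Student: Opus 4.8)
The plan is to verify separately the two defining features of a $\square^\mu(\kappa)$-sequence: that $\overrightarrow{C}$ has no thread (so it is a genuine $\square(\kappa)$-sequence), and that $\{\alpha < \kappa \mid \mathrm{otp}(C_\alpha) = \mu\}$ is stationary in $V[G]$. Granting both, $\overrightarrow{C}$ is a $\square^\mu(\kappa)$-sequence by the easy direction of Proposition \ref{prop33}: if $\mathrm{otp}(C_\alpha) = \mu$ then $\mathrm{cf}(\alpha) = \mu$, and since every $\beta \in C_\alpha$ satisfies $\mathrm{otp}(C_\alpha \cap \beta) < \mu$, the only multiple of $\mu$ occurring is $0$, so $C^{[\mu]}_\alpha = \{\min C_\alpha\}$ is bounded below $\alpha$. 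Thus $\{\alpha \mid \mathrm{otp}(C_\alpha) = \mu\}$ is contained in $\{\alpha \in S^\kappa_\mu \mid C^{[\mu]}_\alpha \text{ is bounded below } \alpha\}$, and its stationarity gives the extra clause in the definition.

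For the stationarity of $\{\alpha \mid \mathrm{otp}(C_\alpha) = \mu\}$ I would argue by density. Fix a condition $p$ and a name $\dot E$ for a club in $\kappa$, and build a decreasing sequence $\langle q_\xi \mid \xi \le \mu \rangle$ below $p$ with top ordinals $\beta_\xi = \beta^{q_\xi}$ strictly increasing and continuous. At a successor stage I extend $q_\xi$ to force some $\eta_\xi \in \dot E$ with $\beta_\xi < \eta_\xi < \beta_{\xi+1}$ (using that $\dot E$ is forced unbounded), and I choose the new top club freely. At a limit stage $\xi$ I take the union of the earlier conditions and set the top club to be exactly $C_{\beta_\xi} = \{\beta_{\xi'} \mid \xi' < \xi\}$, which is a club of order type $\xi$ cohering with the sequence, exactly as in the limit steps of the closure arguments for $\mathbb{Q}(\kappa)$. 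The resulting lower bound $q_\mu$ has $\mathrm{otp}(C_{\beta_\mu}) = \mu$, and since $\beta_\mu = \sup_\xi \eta_\xi$ is a limit of points forced into $\dot E$, it forces $\beta_\mu \in \dot E$. Hence $q_\mu$ forces that $\dot E$ meets $\{\alpha \mid \mathrm{otp}(C_\alpha) = \mu\}$.

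The harder part is showing that $\overrightarrow{C}$ has no thread. Suppose $p \Vdash$ ``$\dot D$ is a thread''. First I note that $\dot D$ cannot be forced equal to a ground-model set: any fixed $D \in V$ can be made to violate coherence at a new top point above $\beta^p$, so some extension of $p$ forces $\dot D \ne \check D$. Consequently there are a condition $q \le p$, an ordinal $\gamma_0$, and extensions $r_0, r_1 \le q$ deciding ``$\gamma_0 \in \dot D$'' oppositely; after extending I may assume $\beta^{r_0} = \beta^{r_1} = \beta_0 > \gamma_0$. I then build two decreasing branches $\langle q^0_\xi \rangle$ and $\langle q^1_\xi \rangle$ of length $\mu$, starting from $r_0$ and $r_1$, sharing a single increasing continuous sequence of top ordinals $\langle \beta_\xi \rangle$ and, at every limit stage, the common top club $C_{\beta_\xi} = \{\beta_{\xi'} \mid \xi' < \xi\}$; at successor stages each branch additionally forces a new point of $\dot D$ into $(\beta_\xi, \beta_{\xi+1})$, so that $\delta = \sup_\xi \beta_\xi$ is forced by both branches to be a limit point of $\dot D$. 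The two branches genuinely differ (they disagree about $\gamma_0 \in \dot D$), but this disagreement is confined below the limit points of the final club, so the limit conditions $q^0_\mu$ and $q^1_\mu$ carry the identical top club $C_\delta = \{\beta_\xi \mid \xi < \mu\}$.

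The contradiction then comes from the thread property: both $q^0_\mu$ and $q^1_\mu$ force $\dot D \cap \delta = C_\delta$, where $C_\delta$ is a single fixed ground-model set, while $q^0_\mu \Vdash \gamma_0 \in \dot D$ and $q^1_\mu \Vdash \gamma_0 \notin \dot D$. Whichever way ``$\gamma_0 \in C_\delta$'' is decided in $V$, one of the two conditions is forced to assert a falsehood about the concrete set $C_\delta$, which is impossible. The main obstacle is exactly the synchronization in this construction, namely producing two conditions that disagree about $\dot D$ yet share the same top club; the point that makes it work is that the continuous limit steps force the two branches to agree on their top clubs at precisely the limit points where coherence is tested, while their genuine differences occur only at non-limit points of $C_\delta$.
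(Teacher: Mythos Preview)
Your proof is correct, and your stationarity argument is essentially the paper's (a density argument building a decreasing $\mu$-sequence whose limit condition has top club of order type $\mu$, differing only cosmetically in how the top clubs are chosen along the way).

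Where you diverge is in the ``no thread'' step. The paper does not give a separate argument: it simply observes that the stationarity of $S=\{\alpha\mid\mathrm{otp}(C_\alpha)=\mu\}$ already rules out threads. Indeed, if $D$ were a thread then $D'$ is club, so $D'\cap S$ contains two points $\alpha<\beta$; but then $C_\alpha=D\cap\alpha\subsetneq D\cap\beta=C_\beta$ while both have order type $\mu$, a contradiction. This is a one-line deduction once stationarity is in hand.

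Your route instead builds two synchronized branches below a putative $p\Vdash``\dot D$ is a thread'' that share top ordinals and, at limit stages, the common top club $\{\beta_\xi\mid\xi<\mu\}$, yet disagree about whether $\gamma_0\in\dot D$. This works, but it is more machinery than needed. In fact even your second branch is superfluous: once a single branch forces $\gamma_0\in\dot D$ with $\gamma_0<\beta_0=\min C_\delta$, the limit condition already forces $\gamma_0\in\dot D\cap\delta=C_\delta$ while $\gamma_0\notin C_\delta$, which is the contradiction. (The ``$\dot D\notin V$'' step and the splitting into $r_0,r_1$ play no essential role.) What your approach does buy is independence from the stationarity argument---it shows unthreadability directly---and it illustrates the synchronization technique of running parallel descending sequences with shared top data, which is a genuinely useful idea elsewhere. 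But for this lemma the paper's observation is the cleaner path.
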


\begin{proof}
 Let $S = \{\alpha < \kappa \mid \mathrm{otp}(C_\alpha) = \mu \}$. It suffices to show that, in $V[G]$, $S$ is stationary in $\kappa$. Note that this implies that $\overrightarrow{C}$ doesn't have a thread, since any club in $\kappa$ must meet $S$ in two points.
  
 Work in $V$, let $\dot{D}$ be a $\mathbb{Q}(\kappa)$-name forced by the empty condition to be a club in $\kappa$, let $\dot{S}$ be a $\mathbb{Q}(\kappa)$-name for $S$, and let $q \in \mathbb{Q}(\kappa)$. We will find $p \leq q$ such that $p \Vdash \dot{D} \cap \dot{S} \not= \emptyset$.

We construct $\langle q_\alpha \mid \alpha \leq \mu \rangle$, a decreasing sequence of conditions from $\mathbb{Q}(\kappa)$ such that for every $\alpha \leq \mu$,
\begin{enumerate}
 \item {$\mathrm{otp}(C^{q_\alpha}_{\beta^{q_\alpha}}) \leq \mu$.}
 \item {$E_\alpha = \{\beta^{q_\delta} \mid \delta < \alpha \}$ is closed below its supremum.}
 \item {For all $\delta<\alpha$, $\beta^{q_\delta}<\beta^{q_\alpha}$ and $\beta^{q_\delta} \in (C^{q_\alpha}_{\beta^{q_\alpha}})'$.}
 \item {If $\alpha<\mu$, then $q_{\alpha+1} \Vdash \dot{D} \cap (\beta^{q_\alpha}, \beta^{q_{\alpha+1}}) \not= \emptyset$.}
\end{enumerate}

To carry out this construction, we first let $\beta^{q_0}=\beta^q + \omega$. For limit ordinals $\zeta \leq \beta^q$, let $C^{q_0}_\zeta = C^q_\zeta$. Let $C^{q_0}_{\beta^{q_0}} = \{\beta^q + n \mid n < \omega \}$. Next, suppose that $\alpha = \alpha' + 1$ and that we have already constructed $\langle q_\delta \mid \delta \leq \alpha' \rangle$. Find $q^*_\alpha \leq q_{\alpha'}$ and $\xi_\alpha > \beta^{q_{\alpha'}}$ such that $q^*_\alpha \Vdash \xi_\alpha \in \dot{D}$. Find $q^{**}_\alpha \leq q^*_\alpha$ such that $\beta^{q^{**}_\alpha} \geq \xi_\alpha$. Let $\beta^{q_\alpha}=\beta^{q^{**}_\alpha} + \omega$. For limit $\zeta \leq \beta^{q^{**}_\alpha}$, let $C^{q_\alpha}_\zeta = C^{q^{**}_\alpha}_\zeta$. Finally, let $C^{q_\alpha}_{\beta^{q_\alpha}} = C^{q_\alpha}_{\beta^{q_{\alpha'}}} \cup \{\beta^{q_{\alpha'}}\} \cup \{\beta^{q^{**}_\alpha} + n \mid n < \omega \}$.

Now suppose that $\alpha < \mu$ is a limit ordinal and we have constructed $\langle q_\delta \mid \delta < \alpha \rangle$. Let $\beta^{q_\alpha} = \sup(\{\beta^{q_\delta} \mid \delta <\alpha \})$. For limit ordinals $\zeta < \beta^{q_\alpha}$, find $\delta < \alpha$ such that $\beta^{q_\delta} \geq \zeta$ and let $C^{q_\alpha}_\zeta = C^{q_\delta}_\zeta$. Let \[C^{q_\alpha}_{\beta^{q_\alpha}} = \bigcup_{\zeta \in E_\alpha} C^{q_\alpha}_\zeta.\] 

It is clear that this construction satisfies requirements 1-4 above. Let $p = q_\mu$. We have arranged so that cf($\beta^p)=\mu$ and $\mathrm{otp}(C^p_{\beta^p})=\mu$. We have also arranged that, for every $\zeta < \beta^p$, $p \Vdash ``\dot{D} \cap (\zeta, \beta^p) \not= \emptyset"$. Thus, since $\dot{D}$ is forced by the empty condition to be a club, $p \Vdash \beta^p \in \dot{D}$. Thus we have found our desired $p \leq q$ such that $p ``\Vdash \dot{D} \cap \dot{S} \not= \emptyset"$. 
\end{proof}

We now introduce a forcing poset designed to add a thread of order type $\kappa$ through a $\square(\kappa)$-sequence.

\begin{definition}
 Let $\kappa$ be a regular cardinal and let $\overrightarrow{C}$ be a $\square(\kappa)$-sequence. $\mathbb{T}(\overrightarrow{C})$ is the partial order consisting of elements $t$ such that:
\begin{enumerate}
 \item {$t$ is a closed, bounded subset of $\kappa$.}
 \item {For every $\alpha \in t'$, $t\cap \alpha = C_\alpha$.}
\end{enumerate}
We denote the maximum element of a condition $t$ by $\gamma^t$. $s \leq t$ if and only if $s$ end-extends $t$, i.e. $\gamma^s \geq \gamma^t$ and $s \cap (\gamma^t + 1) = t$.

\end{definition}

As was the case with the previously defined $\mathbb{T}_\mathcal{D}$, if $\overrightarrow{C}$ was added by $\mathbb{Q}(\kappa)$, then $\mathbb{T}(\overrightarrow{C})$ is quite nice.

\begin{proposition}
 Let $\kappa$ be a regular cardinal and let $\overrightarrow{C}$ be a $\square(\kappa)$-sequence. For every $\alpha<\kappa$, the set $\{t \mid \gamma^t \geq \alpha \}$ is dense in $\mathbb{T}(\overrightarrow{C})$.
\end{proposition}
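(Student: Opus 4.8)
The plan is to prove density by the simplest possible extension: given a condition $t$ whose top $\gamma^t$ lies below the target $\alpha$, I adjoin the single point $\alpha$ on top, sitting above a gap. First I would dispose of the trivial case: if $\gamma^t \geq \alpha$, then $t$ already belongs to $\{t \mid \gamma^t \geq \alpha\}$ and there is nothing to do. So I assume $\gamma^t < \alpha < \kappa$ and set $s = t \cup \{\alpha\}$.

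The key point, which I would record before any verification, is that $\sup(t) = \gamma^t < \alpha$, so there is a gap in $s$ between $\gamma^t$ and $\alpha$ and hence $\alpha$ is an \emph{isolated} point of $s$. Consequently $s$ and $t$ have exactly the same limit points, $s' = t'$, and every element of $s'$ is $\leq \gamma^t < \alpha$. Every subsequent step rests on this observation.

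From here the verifications are routine. First, $s$ is a closed, bounded subset of $\kappa$: it is bounded by $\alpha < \kappa$, and its limit points, being precisely those of $t$, already lie in $t \subseteq s$. Second, $s$ satisfies clause (2) in the definition of $\mathbb{T}(\overrightarrow{C})$: for $\beta \in s' = t'$ we have $\beta \leq \gamma^t < \alpha$, so $s \cap \beta = t \cap \beta$, which equals $C_\beta$ because $t$ is a condition. Third, $s$ end-extends $t$, since $s \cap (\gamma^t + 1) = t$ and $\gamma^s = \alpha > \gamma^t$; thus $s \leq t$. As $\gamma^s = \alpha \geq \alpha$, the condition $s$ witnesses density.

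I do not expect any genuine obstacle. The only subtlety worth flagging is that adjoining $\alpha$ must not create a new limit point of $s$ that would impose a fresh instance of the coherence requirement (2); but the gap below $\alpha$ makes $\alpha$ isolated, so no new limit point arises and the coherence of $\overrightarrow{C}$ is never consulted beyond what $t$ already guarantees. Indeed, the argument uses nothing about $\overrightarrow{C}$ being a $\square(\kappa)$-sequence, or about its having been added by $\mathbb{Q}(\kappa)$; the real force of those hypotheses enters only in the distributivity and closure properties of $\mathbb{T}(\overrightarrow{C})$ treated afterward.
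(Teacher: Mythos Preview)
Your proposal is correct and follows exactly the same approach as the paper: given $t$ with $\gamma^t < \alpha$, set $s = t \cup \{\alpha\}$ and observe that $s \in \mathbb{T}(\overrightarrow{C})$ end-extends $t$. The paper's proof consists of a single sentence to this effect, whereas you have (correctly) spelled out the verification that $\alpha$ is isolated in $s$ so that $s' = t'$ and no new coherence constraint arises.
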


\begin{proof}
 If $t\in \mathbb{T}(\overrightarrow{C})$ and $\gamma^t < \alpha$, then $t\cup \{\alpha \} \in \mathbb{T}(\overrightarrow{C})$.
\end{proof}

\begin{proposition}
\label{thread}
 Let $\kappa$ be a regular cardinal. Let $\mathbb{Q}=\mathbb{Q}(\kappa)$, $\dot{\overrightarrow{C}}$ be a $\mathbb{Q}$-name for the $\square(\kappa)$-sequence added by $\mathbb{Q}$, and $\dot{\mathbb{T}}$ be a $\mathbb{Q}$-name for $\mathbb{T}(\dot{\overrightarrow{C}})$. Then $\mathbb{Q}*\dot{\mathbb{T}}$ has a $\kappa$-closed dense subset.
\end{proposition}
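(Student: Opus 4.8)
The plan is to mirror the argument already given for $\mathbb{Q}*\dot{\mathbb{T}}_\mathcal{D}$: I will exhibit an explicit $\kappa$-closed dense subset $\mathbb{S}$ of $\mathbb{Q}*\dot{\mathbb{T}}$, namely
\[ \mathbb{S} = \{(q,\dot{t}) \in \mathbb{Q}*\dot{\mathbb{T}} \mid q \text{ decides } \dot{t} \text{ to be some } t \text{ with } \gamma^t = \beta^q \}. \]
The role of the requirement $\gamma^t = \beta^q$ is that the top point of the thread-so-far is exactly the largest ordinal about which $q$ has committed any information concerning $\dot{\overrightarrow{C}}$. This alignment is what will allow me to handle limits by honest closure rather than merely the strategic closure of $\mathbb{Q}(\kappa)$.

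For density, given $(q_0,\dot{t}_0)$ I first use that $\mathbb{Q}$ is $\kappa$-strategically closed, hence adds no new ${<}\kappa$-sequences of ordinals, to find $q_1 \le q_0$ deciding $\dot{t}_0$ to be a ground-model condition $t_1$ (a thread condition has size at most $\gamma^{t_1}+1<\kappa$). By the remark that each $\{q \mid \beta^q \ge \alpha\}$ is dense, I extend to $q_2 \le q_1$ with $\beta^{q_2} > \gamma^{t_1}$; end-extension preserves the decision, so $q_2 \Vdash \dot{t}_0 = t_1$. Setting $t = t_1 \cup \{\beta^{q_2}\}$, the new point $\beta^{q_2}$ is isolated in $t$, so $t' = t_1'$ and the coherence clause ``$t\cap\alpha = C_\alpha$ for $\alpha\in t'$'' is inherited verbatim from $t_1$. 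Thus $t$ is a condition of $\mathbb{T}$, $(q_2,t) \in \mathbb{S}$, and $(q_2,t) \le (q_0,\dot{t}_0)$.

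For closure, let $\langle (q_\alpha,\dot{t}_\alpha) \mid \alpha < \nu\rangle$ be strictly decreasing in $\mathbb{S}$ with $\nu<\kappa$ a limit; each $q_\alpha$ decides $\dot{t}_\alpha = t_\alpha$ with $\gamma^{t_\alpha}=\beta^{q_\alpha}$, and end-extension yields $T^* := \bigcup_{\alpha<\nu}t_\alpha$ with $T^* \cap (\beta^{q_\alpha}+1) = t_\alpha$. Put $\beta^q = \sup_{\alpha<\nu}\beta^{q_\alpha}$, so $T^*$ is club in $\beta^q$. I build $q\in\mathbb{Q}$ with top $\beta^q$ by copying $C^{q_\alpha}_\xi$ for $\xi<\beta^q$ and, crucially, setting $C^q_{\beta^q} = T^*$, and I let $\dot{t}$ name $T^*\cup\{\beta^q\}$.

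The main obstacle, and the only nonroutine point, is verifying that this $C^q_{\beta^q}$ coheres with the lower levels so that $q$ genuinely lies in $\mathbb{Q}$ and $T^*\cup\{\beta^q\}$ genuinely lies in $\mathbb{T}$. The key computation is: if $\alpha \in (T^*)'$ with $\alpha<\beta^q$, choose $\alpha'$ with $\beta^{q_{\alpha'}} > \alpha$; then $\alpha$ is a limit point of $T^*\cap(\beta^{q_{\alpha'}}+1)=t_{\alpha'}$, so $\alpha\in t_{\alpha'}'$, and the thread-coherence of $t_{\alpha'}$ gives $C^q_{\beta^q}\cap\alpha = T^*\cap\alpha = t_{\alpha'}\cap\alpha = C^{q_{\alpha'}}_\alpha = C^q_\alpha$, as required. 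The same computation applied at $\alpha=\beta^q$ shows $(T^*\cup\{\beta^q\})\cap\beta^q = T^* = C^q_{\beta^q}$, so $\dot{t}$ is a legitimate $\mathbb{T}$-condition. Hence $(q,\dot{t})\in\mathbb{S}$, and since $q$ end-extends each $q_\alpha$ and $t$ end-extends each $t_\alpha$, it is a lower bound. This is exactly the step where the thread supplies the limit-level club $C^q_{\beta^q}$ that strategic closure alone cannot guarantee, and it is precisely what upgrades the $\kappa$-strategic closure of $\mathbb{Q}$ to honest $\kappa$-closure of $\mathbb{S}$.
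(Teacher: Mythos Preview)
Your proof is correct and follows essentially the same approach as the paper: you define the same dense subset $\mathbb{S}$ of pairs $(q,\dot t)$ with $q$ deciding $\dot t$ and $\gamma^t=\beta^q$, verify density by extending and appending the new top point, and verify $\kappa$-closure by taking $C^q_{\beta^q}$ to be the union of the decided threads. Your explicit verification that $C^q_{\beta^q}=T^*$ coheres with the lower clubs is a bit more detailed than the paper's, but the argument is identical in substance.
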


\begin{proof}
 Let $\mathbb{S} = \{(q, \dot{t}) \mid q \mbox{ decides the value of } \dot{t} \mbox{ and } q\Vdash ``\beta^q = \gamma^t" \}$. We first show that $\mathbb{S}$ is dense in $\mathbb{Q}*\dot{\mathbb{T}}$. To this end, let $(q_0, \dot{t}_0) \in \mathbb{Q}*\dot{\mathbb{T}}$. Since $\mathbb{Q}$ is $\kappa$-strategically closed, $\dot{t}_0$ is forced to be in the ground model. Find $q \leq q_0$ and $t^*$ such that $q \Vdash ``\dot{t}_0 = \check{t}^*"$. Without loss of generality, we may assume that $\beta^{q} > \mathrm{max}(t^*)$. Let $\dot{t}$ be such that $q \Vdash ``\dot{t} = \dot{t}_0 \cup \{\check{\beta}^{q} \}"$. Then $(q, \dot{t}) \leq (q_0, \dot{t}_0)$ and $(q, \dot{t})\in \mathbb{S}$.

Next, we claim that $\mathbb{S}$ is $\kappa$-closed. Let $\alpha < \kappa$ and let $\langle (q_\delta, \dot{t}_\delta) \mid \delta < \alpha \rangle$ be a decreasing sequence of conditions from $\mathbb{S}$. Without loss of generality, we may assume that, for every $\delta < \alpha$, $\beta^{q_\delta} < \beta^q$. We will construct a lower bound $(q, \dot{t})$. Let $\beta^q = \sup(\{\beta^{q_\delta} \mid \delta < \alpha \})$.  For limit $\zeta < \beta^q$, let $\delta < \alpha$ be such that $\beta^{q_\delta} \geq \zeta$ and set $C^{q}_\zeta = C^{q_\delta}_\zeta$. Let $X = \{\zeta \mid \mbox{for some } \delta < \alpha, q_\delta \Vdash ``\check{\zeta} \in \dot{t}_\alpha" \}$. By our definition of $\mathbb{S}$, $X$ is club in $\beta^q$ and for every $\zeta \in X'$, $X \cap \zeta = C^q_\zeta$. Thus, we can let $C^q_{\beta^q} = X$. Finally, let $\dot{t}$ be such that $q\Vdash ``\dot{t} = \check{X} \cup \{\check{\beta}^q \}"$. $(q,\dot{t})$ is then a lower bound of $\langle (q_\delta, \dot{t}_\delta) \mid \delta < \alpha \rangle$ in $\mathbb{S}$. 
\end{proof}

A key point here, which will be exploited in the proof of the next theorem, is that, for an uncountable cardinal $\kappa$, one can force to add and then thread a $\square(\kappa^+)$-sequence with a two-step iteration which is $\kappa^+$-closed, whereas if one wants to add and thread, for example, a $\square_{\kappa, < \kappa}$-sequence, the best one can do is a two-step iteration which is $\kappa$-closed.

\begin{theorem}
\label{th312}
 Suppose $\mu < \kappa$ are regular cardinals and $\lambda > \kappa$ is a measurable cardinal. Let $G$ be $\mathrm{Coll}(\kappa, <\lambda$)-generic over $V$ and, in $V[G]$, let $H$ be $\mathbb{Q}(\kappa^+)$-generic over $V[G]$. Then, in $V[G*H]$, $\square^\mu(\kappa^+)$ holds and $\square_{\kappa, <\kappa}$ fails.
\end{theorem}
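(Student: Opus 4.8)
The plan is to separate the two conclusions. For $\square^\mu(\kappa^+)$ I would first record that, as $\mathbb{P}=\mathrm{Coll}(\kappa,<\lambda)$ is $\kappa$-closed and $\lambda$ is inaccessible, $V[G]$ satisfies $\lambda=\kappa^+$, $\kappa^{<\kappa}=\kappa$, and $2^\kappa=\kappa^+$. The lemma above that the coherent sequence added by $\mathbb{Q}(\kappa)$ is a $\square^\mu(\kappa)$-sequence applies verbatim with $\kappa^+$ in the role of $\kappa$ (one needs only $\mu<\kappa^+$ with $\mu,\kappa^+$ regular, which holds), so the sequence $\overrightarrow{C}$ added by $H$ is a $\square^\mu(\kappa^+)$-sequence. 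Since $\mathbb{Q}(\kappa^+)$ is $\kappa^+$-strategically closed and, by $2^\kappa=\kappa^+$, has size $\kappa^+$, it preserves all cardinals, so $\square^\mu(\kappa^+)$ genuinely holds in $V[G*H]$.

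For the failure of $\square_{\kappa,<\kappa}$ I would argue by contradiction, assuming $\overrightarrow{\mathcal{C}}=\langle\mathcal{C}_\alpha\mid\alpha<\lambda\rangle$ is a $\square_{\kappa,<\kappa}$-sequence in $V[G*H]$, and mimic the lifting argument of the covering-matrix theorem above, now exploiting the stronger closure from Proposition \ref{thread}. Fix $j:V\to M$ with $\mathrm{crit}(j)=\lambda$ and $M^\lambda\subseteq M$. In $V[G]$ the poset $\mathbb{Q}(\kappa^+)*\dot{\mathbb{T}}(\overrightarrow{C})$ has size $\lambda$ and, by Proposition \ref{thread}, a $\kappa^+$-closed (hence $\kappa$-closed) dense subset $\mathbb{S}$; so Fact \ref{lift}, applied with $j(\lambda)$ in the role of $\mu$, extends the identity embedding of $\mathbb{P}$ into $j(\mathbb{P})=\mathrm{Coll}(\kappa,<j(\lambda))$ to a complete embedding of $\mathbb{P}*\mathbb{Q}(\kappa^+)*\dot{\mathbb{T}}(\overrightarrow{C})$ into $j(\mathbb{P})$ with $\kappa$-closed quotient $\mathbb{R}$. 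Taking $I$ to be $\mathbb{T}(\overrightarrow{C})$-generic over $V[G*H]$ and $J$ the $\mathbb{R}$-generic, I lift to $j:V[G]\to M[G*H*I*J]$. The thread $E=\bigcup I$ of $\overrightarrow{C}$ then yields a master condition $q^*\in j(\mathbb{Q}(\kappa^+))=\mathbb{Q}(j(\lambda))$ with $\beta^{q^*}=\lambda$ and $C^{q^*}_\lambda=E$; since $j$ fixes each condition of $H$ (their supports lying below $\lambda$), $j[H]=H$, so choosing $K$ to be $j(\mathbb{Q}(\kappa^+))$-generic with $q^*\in K$ (built using $M^\lambda\subseteq M$ and the strategic closure of $j(\mathbb{Q}(\kappa^+))$) I obtain $j:V[G*H]\to M[G*H*I*J*K]$.

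By elementarity $j(\overrightarrow{\mathcal{C}})$ is a $\square_{\kappa,<\kappa}$-sequence with $j(\overrightarrow{\mathcal{C}})\restriction\lambda=\overrightarrow{\mathcal{C}}$, so any $F\in j(\overrightarrow{\mathcal{C}})_\lambda$ is a club in $\lambda$ of order type at most $\kappa$ with $F\cap\alpha\in\mathcal{C}_\alpha$ for all $\alpha\in F'$; that is, $F$ threads $\overrightarrow{\mathcal{C}}$. As $F$ is a $\kappa$-sequence and $j(\mathbb{Q}(\kappa^+))$ is $\geq\kappa^+$-closed over $M[G*H*I*J]$, in fact $F\in M[G*H*I*J]$. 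Now $\kappa^+$ is regular in $V[G*H*I]$ (the threading $\mathbb{T}(\overrightarrow{C})$ being $\kappa^+$-preserving), whereas $F$ is an unbounded subset of $\lambda$ of order type $\kappa$; hence $F\notin V[G*H*I]$, i.e. $F$ genuinely depends on $J$. To turn this into a contradiction I would run the splitting-tree argument of the covering-matrix theorem, building a tree of conditions that force incompatible information about the bounded segments $F\cap\alpha$, each of which is forced into the corresponding $\mathcal{C}_\alpha$. The decisive gain over that theorem is that, because $\mathbb{S}$ is $\kappa^+$-closed rather than merely $\kappa$-closed, the tree can be driven to full height $\kappa$; at a limit stage $\alpha_\kappa<\lambda$ the $2^\kappa\geq\kappa$ branches then exhibit $\kappa$-many distinct elements of $\mathcal{C}_{\alpha_\kappa}$, contradicting $|\mathcal{C}_{\alpha_\kappa}|<\kappa$. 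This is exactly what removes the hypothesis, essential in the covering-matrix theorem, that $\kappa$ not be strongly inaccessible: there, with only $\kappa$-closed forcing, one could build trees only of height $\gamma<\kappa$ and so needed $2^\gamma\geq\kappa$.

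The hard part will be reconciling the height-$\kappa$ construction with the fact that the thread name depends on the quotient $\mathbb{R}$, which is only $\kappa$-closed. Concretely, I must arrange the construction so that deciding the bounded pieces $F\cap\alpha$ for $\alpha<\lambda$ is carried out inside the $\kappa^+$-closed factors $\mathbb{S}$ and $j(\mathbb{Q}(\kappa^+))$ — the collapse quotient $\mathbb{R}$ governing the cofinal structure of $\lambda$ but not its proper initial segments — so that lower bounds survive at every limit stage up to and including $\kappa$. Isolating the correct splitting subclaim at this level, and verifying that the limit stages of the tree can be navigated in the spirit of the strategic-closure arguments above, is where the real work lies; once that is in place, the remainder is a transcription of the covering-matrix construction with $\kappa^+$ replacing $\kappa$ throughout.
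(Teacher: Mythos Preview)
Your lifting setup and master-condition construction match the paper's, and you have correctly isolated the two key facts: (a) $F\in V[G*H*I*J]$ because $j(\mathbb{Q}(\kappa^+))$ adds no $\kappa$-sequences, and (b) $F\notin V[G*H*I]$ because $\mathbb{Q}*\mathbb{T}$ has a $\kappa^+$-closed dense subset, so $\lambda$ remains $\kappa^+$ there. But you have misidentified where the tree argument goes and what the $\kappa^+$-closure buys. The paper does \emph{not} build a tree of height $\kappa$. It uses the $\kappa$-closure of the quotient $\mathbb{R}$ to run the splitting-tree argument exactly as in the covering-matrix theorem (height $\gamma$ with $2^\gamma\geq\kappa$ when $\kappa$ is not strongly inaccessible, and the Fodor/elementary-submodel variant when $\kappa$ is inaccessible) to conclude the opposite of your (b), namely that $F\in V[G*H*I]$. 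The contradiction is then (a) + tree $\Rightarrow F\in V[G*H*I]$ versus your (b) $\Rightarrow F\notin V[G*H*I]$.

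So the $\kappa^+$-closure of $\mathbb{S}$ is not used to push the tree higher---it cannot be, since the splitting happens entirely in $\mathbb{R}$, which is only $\kappa$-closed---but rather to supply the other half of the contradiction (that no cofinal $\kappa$-sequence in $\lambda$ lives in $V[G*H*I]$). Your final paragraph, where you propose to somehow transfer the splitting into the $\kappa^+$-closed factors, is chasing a difficulty that does not exist: you already have the contradiction once you run the standard tree argument in $\mathbb{R}$. In particular, your claim that the strongly-inaccessible hypothesis is ``removed'' is incorrect; the paper still splits into the two cases (non-inaccessible: height-$\gamma$ tree; inaccessible: Fodor plus an elementary submodel) just as before. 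The simplification over the covering-matrix theorem is only that the tree is built over the single factor $\mathbb{R}$ rather than over $\mathbb{T}*\mathbb{R}*j(\mathbb{Q})$, and that the endgame appeals to regularity of $\lambda$ rather than to pulling $F$ all the way back into $V[G*H]$.
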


Note that, in $V[G*H]$, $\kappa^{<\kappa} = \kappa$, so $\square^*_\kappa$ holds.

\begin{proof}
 We have already shown that $\square^\mu(\kappa^+)$ holds in any extension by $\mathbb{Q}(\kappa^+)$, so it remains to show that $\square_{\kappa, <\kappa}$ fails. Let $\mathbb{Q}=\mathbb{Q}(\kappa^+)$, and let $\overrightarrow{C}$ be the $\square(\kappa^+)$-sequence added by $H$. In $V[G*H]$, let $\mathbb{T} = \mathbb{T}(\overrightarrow{C})$.

Fix an elementary embedding $j:V \rightarrow M$ with critical point $\lambda$. $j\restriction \mathrm{Coll}(\kappa, <\lambda):\mathrm{Coll}(\kappa, <\lambda) \rightarrow \mathrm{Coll}(\kappa, <j(\lambda))$ is the identity map and, in $V^{\mathrm{Coll}(\kappa, <\lambda)}$, $\mathbb{Q}*\dot{\mathbb{T}}$ has a $\kappa^+$-closed dense subset which has size $\kappa^+$. Thus, we can extend $j$ to a complete embedding of $\mathrm{Coll}(\kappa, <\lambda)*\dot{\mathbb{Q}}*\dot{\mathbb{T}}$ into $\mathrm{Coll}(\kappa, <j(\lambda))$ such that the quotient forcing, $\mathbb{R}$, is $\kappa$-closed. Then, letting $I$ be $\mathbb{T}$-generic over $V[G*H]$ and letting $J$ be $\mathbb{R}$-generic over $V[G*H*I]$, we can further extend $j$ to an elementary embedding $j:V[G] \rightarrow M[G*H*I*J]$.

We would like to extend $j$ still further to have domain $V[G*H]$. This is precisely the reason for introducing the threading poset. In $V[G*H*I*J]$, $j(\mathbb{Q})$ is the forcing poset to add a $\square(j(\lambda))$-sequence. $\langle C_\alpha \mid \alpha < \lambda \rangle$ would be a condition in $j(\mathbb{Q})$ if it had a top element. To arrange this, we define $q^*\in j(\mathbb{Q})$ by letting $\beta^{q^*} = \lambda$, $C^{q^*}_\alpha = C_\alpha$ for all $\alpha < \lambda$, and $C^{q^*}_\lambda = \bigcup I$. Since $\bigcup I$ is a thread through $\overrightarrow{C}$, $q^*$ is a condition in $j(\mathbb{Q})$. Moreover, for every $q\in H$, $j(q) = q \leq q^*$. Thus, if $K$ is $j(\mathbb{Q})$-generic over $V[G*H*I*J]$ and $q^* \in K$, then $j[H] \subseteq K$, so we can extend $j$ to an elementary embedding $j:V[G*H]\rightarrow M[G*H*I*J*K]$.

Now suppose for sake of contradiction that $\overrightarrow{\mathcal{D}} = \langle \mathcal{D}_\alpha \mid \alpha < \lambda \rangle$ is a $\square_{\kappa, <\kappa}$-sequence in $V[G*H]$. For each $\alpha < \lambda$, $j(\mathcal{D}_\alpha) = \mathcal{D}_\alpha$. Let $j(\overrightarrow{\mathcal{D}}) = \langle \mathcal{D}_\alpha \mid \alpha < j(\lambda) \rangle$. $j(\overrightarrow{\mathcal{D}})$ is a $\square_{\kappa, <\kappa}$-sequence in $M[G*H*I*J*K]$. Choose $F \in \mathcal{D}_\lambda$. $F$ is a thus a thread through $\overrightarrow{\mathcal{D}}$.

\begin{claim}
 $F \in V[G*H*I*J]$.
\end{claim}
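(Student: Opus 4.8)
The plan is to show that the last forcing in the iteration, namely $j(\mathbb{Q})$, is far too closed to have added the short thread $F$, so that $F$ must already be present in $V[G*H*I*J]$. The only forcing separating $V[G*H*I*J]$ from $V[G*H*I*J*K]$ is $K$, which is $j(\mathbb{Q})$-generic, so it suffices to argue that $j(\mathbb{Q})$ adds no new $\kappa$-sequences of ordinals over $V[G*H*I*J]$.

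First I would compute the closure of $j(\mathbb{Q})$. By elementarity, in $M[G*H*I*J]$ the poset $j(\mathbb{Q})$ is $\mathbb{Q}(j(\lambda))$, the forcing that adds a $\square(j(\lambda))$-sequence by initial segments. The proposition that $\mathbb{Q}(\delta)$ is $\delta$-strategically closed, applied in $M[G*H*I*J]$ with $\delta = j(\lambda)$, shows that $j(\mathbb{Q})$ is $j(\lambda)$-strategically closed there; since $\kappa < \lambda < j(\lambda)$, it is in particular $(\kappa+1)$-strategically closed. The strategy witnessing this is the canonical one described in that proposition, responding to each position by taking the coherent lower bound of the conditions played so far (a union of the clubs played at previous stages, with a short tail appended at successor stages). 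This procedure is uniform: applied to any descending sequence of conditions of length at most $\kappa$ it returns a legitimate condition of $j(\mathbb{Q})$, regardless of the ambient universe. Hence the same strategy witnesses that $j(\mathbb{Q})$ is $(\kappa+1)$-strategically closed as a forcing over the larger model $V[G*H*I*J]$ over which $K$ is actually generic. (Equivalently, since $j$ arises from a normal measure we have ${}^\lambda M \subseteq M$, and the collapse is $\kappa$-closed, so $M[G*H*I*J]$ and $V[G*H*I*J]$ have the same descending $\kappa$-sequences of conditions and the game $G_{\kappa+1}(j(\mathbb{Q}))$ is literally the same in both.) By the Fact that a $(\kappa+1)$-strategically closed forcing adds no new $\kappa$-sequences of ordinals, $K$ adds no new $\kappa$-sequence of ordinals over $V[G*H*I*J]$.

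It then remains to observe that $F$ is coded by such a short sequence. Since $F \in \mathcal{D}_\lambda$ and $j(\overrightarrow{\mathcal{D}})$ is a $\square_{\kappa, <\kappa}$-sequence in $M[G*H*I*J*K]$, the set $F$ is a club in $\lambda$ with $\mathrm{otp}(F) \leq \kappa$, so its increasing enumeration is a sequence of ordinals below $\lambda$ of length at most $\kappa$ lying in $V[G*H*I*J*K] = V[G*H*I*J][K]$. As $K$ adds no new such sequence, this enumeration, and therefore $F$ itself, belongs to $V[G*H*I*J]$, as desired. The one genuinely delicate point in this argument is the transfer of strategic closure from $M[G*H*I*J]$ to $V[G*H*I*J]$; I would make this precise either through the uniformity of the canonical strategy or through the closure of $M$ under $\kappa$-sequences noted above. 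Everything else is bookkeeping, and in particular this claim, unlike its analogue in the earlier theorem, requires no tree argument, precisely because the last step $K$ is added by the highly closed poset $j(\mathbb{Q})$ rather than by a thread-adding poset.
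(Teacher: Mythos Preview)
Your argument is correct and follows the same approach as the paper: the paper's proof is the single sentence that $K$ is generic for $j(\mathbb{Q})$, which is $j(\lambda)$-strategically closed in $V[G*H*I*J]$ and hence adds no $\kappa$-sequences of ordinals, so $F\in V[G*H*I*J]$. You have simply spelled out in more detail the one point the paper leaves implicit, namely why the strategic closure of $j(\mathbb{Q})$ holds over $V[G*H*I*J]$ rather than merely over $M[G*H*I*J]$; your remark that the canonical strategy is absolute (equivalently, that $M$ is sufficiently closed in $V$) is exactly the right justification.
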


$F \in V[G*H*I*J*K]$. However, $K$ is generic for $j(\mathbb{Q})$, which is $j(\lambda)$-strategically closed in $V[G*H*I*J]$ and thus does not add any $\kappa$-sequences of ordinals. Thus, $F \in V[G*H*I*J]$.

\begin{claim}
 $F \in V[G*H*I]$
\end{claim}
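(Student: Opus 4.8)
The plan is to argue by contradiction: suppose $F \notin V[G*H*I]$, and recall that $\mathbb{R}$, the $\kappa$-closed quotient forcing taking us from $V[G*H*I]$ to $V[G*H*I*J]$, is the only forcing involved. The key structural observation is that every proper initial segment of $F$ already lies in $V[G*H*I]$: if $\alpha \in F'$ then $F \cap \alpha \in \mathcal{D}_\alpha$, and $\mathcal{D}_\alpha \in V[G*H] \subseteq V[G*H*I]$; moreover, since a thread through $\overrightarrow{\mathcal{D}}$ has order type $\leq \kappa$, each such $F \cap \alpha$ has order type $<\kappa$ and so, being a short sequence of ordinals, is not added by the $\kappa$-closed $\mathbb{R}$ in any case. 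Thus $F$ is a cofinal branch through the tree $T \in V[G*H*I]$ of coherent initial segments (closed bounded $c \subseteq \lambda$ with $c \cap \alpha \in \mathcal{D}_\alpha$ for all $\alpha \in c'$), a tree all of whose fibers $\mathcal{D}_\alpha$ have size $<\kappa$, and the task reduces to showing that $\mathbb{R}$ adds no new cofinal branch to $T$.

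To exploit the narrowness of the fibers, I would pass to mutually generic copies of $\mathbb{R}$. Since $\mathbb{R}$ is $\kappa$-closed, so is the full-support product $\prod_{\eta < \kappa} \mathbb{R}$; forcing with it over $V[G*H*I]$, below a condition forcing $\dot{F} \notin V[G*H*I]$, yields mutually generic $\langle J_\eta \mid \eta < \kappa \rangle$ and threads $F_\eta = \dot{F}^{J_\eta}$ through $\overrightarrow{\mathcal{D}}$. Mutual genericity gives $V[G*H*I][J_\eta] \cap V[G*H*I][J_{\eta'}] = V[G*H*I]$ for $\eta \neq \eta'$, so since each $F_\eta \notin V[G*H*I]$ the threads are pairwise distinct. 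The contradiction should then come from the fiber bound: at a suitable limit point $\alpha^*$ the restrictions $F_\eta \cap \alpha^*$ would be $\kappa$-many distinct members of $\mathcal{D}_{\alpha^*}$, contradicting $|\mathcal{D}_{\alpha^*}| < \kappa$.

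The delicate point, and the step I expect to be the main obstacle, is producing a single $\alpha^* < \lambda$ at which $\kappa$-many of the $F_\eta$ simultaneously have a limit point and pairwise disagree. This is precisely where the preservation of $\lambda$ must be used: because $\mathbb{Q}(\kappa^+) * \dot{\mathbb{T}}$ has a $\kappa^+$-closed dense subset, $\lambda = \kappa^+$ remains regular in $V[G*H*I]$, so the $\leq\kappa$-many ordinals recording the pairwise differences of the $F_\eta$ are bounded below $\lambda$, and $\alpha^*$ can be chosen above this bound. Equivalently, one may run the argument as a single fusion/tree construction inside $V[G*H*I]$: build a splitting tree of conditions of $\mathbb{R}$ whose splitting ordinals are kept bounded below some $\alpha^* < \lambda$ (again by regularity of $\lambda$), so that distinct branches decide $\dot{F} \cap \alpha^*$ to be distinct elements of $\mathcal{D}_{\alpha^*}$; the bound $|\mathcal{D}_{\alpha^*}| < \kappa$ then caps the splitting and forces $\dot{F}$ to be decided, whence $F \in V[G*H*I]$. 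In either formulation the heart of the matter is leveraging the $<\kappa$-sized fibers of $\overrightarrow{\mathcal{D}}$ against the regularity of $\lambda$ in $V[G*H*I]$; and the resulting conclusion $F \in V[G*H*I]$ is exactly the desired contradiction, since such an $F$ has order type $\leq \kappa$ and would singularize the regular cardinal $\lambda$, which is what refutes the existence of $\overrightarrow{\mathcal{D}}$.
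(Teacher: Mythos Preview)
Your tree/fusion formulation is essentially the paper's argument, and the ingredients you isolate --- the $\kappa$-closure of $\mathbb{R}$, the fiber bound $|\mathcal{D}_\alpha|<\kappa$, and the regularity of $\lambda$ in $V[G*H*I]$ --- are exactly the ones used. Two points, however, need more care.

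First, the product-of-$\kappa$-copies formulation is not an equivalent rephrasing; as stated it has a genuine gap. The regularity of $\lambda$ in $V[G*H*I]$ cannot be invoked to bound the set of pairwise-difference ordinals of the $F_\eta$, since that set exists only in the product extension, where $\lambda$ has been collapsed to cofinality $\kappa$ (each copy of $\mathbb{R}$, being a quotient of $\mathrm{Coll}(\kappa,<j(\lambda))$, already collapses $\lambda$). For the same reason you cannot assume that $\kappa$-many clubs in $\lambda$ have a common limit point there. The tree construction, carried out inside $V[G*H*I]$ where $\lambda$ is still regular, is precisely how one does the bookkeeping correctly; this is what the paper does, and the product heuristic only motivates the splitting step.

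Second, your tree argument needs a height $\gamma<\kappa$ with $2^\gamma\geq\kappa$, so that the $2^\gamma$ branches yield $\geq\kappa$ distinct members of $\mathcal{D}_{\alpha_\gamma}$; no such $\gamma$ exists when $\kappa$ is strongly inaccessible. The paper handles that case separately: by Fodor, fix $\nu<\kappa$ and a stationary $S\subseteq S^\lambda_{<\kappa}$ with $|\mathcal{D}_\alpha|\leq\nu$ for all $\alpha\in S$; build the splitting tree of height $\nu$; then take an elementary submodel $M\prec H(\theta)$ containing the relevant data with $\lambda_M:=M\cap\lambda\in S$, and extend each of the $2^\nu$ branches (using the $\kappa$-closure of $\mathbb{R}$) to force $\lambda_M$ to be a limit point of $\dot{F}$. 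This produces $2^\nu>\nu$ distinct elements of $\mathcal{D}_{\lambda_M}$, contradicting $|\mathcal{D}_{\lambda_M}|\leq\nu$.
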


\begin{proof}
Suppose not. Work in $V[G*H*I]$. Then there is an $\mathbb{R}$-name $\dot{F}$ such that $F=\dot{F}^J$ and $\Vdash_\mathbb{R} ``\dot{F}$ is not in the ground model".

Suppose first that $\kappa$ is not strongly inaccessible. Let $\gamma$ be the least cardinal such that $2^\gamma \geq \kappa$. We will construct $\langle p_s \mid s\in {^{\leq \gamma}2} \rangle$ and $\langle \alpha_\beta \mid \beta \leq \gamma \rangle$ satisfying:
\begin{enumerate}
\item{$p_{\langle \rangle} \Vdash ``\dot{F}$ is a thread through $\overrightarrow{\mathcal{D}}$".}
\item{For all $s,u \in {^{\leq \gamma}2}$ such that $s\subseteq u$, we have $p_s, p_u \in \mathbb{R}$ and $p_u\leq p_s$.}
\item{$\langle \alpha_\beta \mid \beta \leq \gamma \rangle$ is a strictly increasing, continuous sequence of ordinals less than $\kappa^+$.}
\item{For all $s\in {^{<\gamma}2}$, there is $\alpha < \alpha_{|s|+1}$ such that $p_{s^\frown \langle 0 \rangle}$ and $p_{s^\frown \langle 1 \rangle}$ decide the statement $``\check{\alpha} \in \dot{F}"$ in opposite ways.}
\item{For all $\beta < \gamma$ and all $s\in {^\beta 2}$, both $p_{s^\frown \langle 0 \rangle}$ and $p_{s^\frown \langle 1 \rangle}$ force that $\dot{F} \cap (\alpha_\beta, \alpha_{\beta+1}) \not= \emptyset$.}
\item{For all limit ordinals $\beta \leq \gamma$ and all $s\in {^\beta 2}$, $p_s \Vdash ``\alpha_\beta$ is a limit point of $\dot{F}$" and there is $D_s \in \mathcal{D}_{\alpha_\beta}$ such that $p_s \Vdash ``\dot{F} \cap \alpha_\beta = D_s"$.}
\end{enumerate}

Suppose for a moment that we have successfully constructed these sequences. Then, for all $s\in {^\gamma 2}$, there is $D_s \in \mathcal{D}_{\alpha_\gamma}$ such that $p_s \Vdash ``\alpha_\gamma$ is a limit point of $\dot{F}$ and $\dot{F} \cap \alpha_\beta = D_s"$. But if $s,u \in {^\gamma 2}$ and $s \not= u$, then there is $\alpha <\alpha_\gamma$ such that $p_s$ and $p_u$ decide the statement $``\alpha \in \dot{F}"$ in opposite ways, so $D_s \not= D_u$. But $2^\gamma \geq \kappa$, so this contradicts the fact that $|\mathcal{D}_{\alpha_\gamma}|<\kappa$.

Now we turn our attention to the construction of such sequences. Fix $p_{\langle \rangle}$ such that $p_{\langle \rangle} \Vdash ``\dot{F}$ is a thread through $\overrightarrow{\mathcal{D}}$", and let $\alpha_0=0$. Fix $\beta <\gamma$ and suppose that $\langle p_s \mid s\in {^\beta 2} \rangle$ and $\alpha_\beta$ have been defined. Fix $s \in {^\beta 2}$. Since $\Vdash_\mathbb{R} ``\dot{F}$ is unbounded in $\kappa^+$", we can find $\alpha > \alpha_\beta$ and $p'_s \leq p_s$ such that $p'_s \Vdash ``\check{\alpha} \in \dot{F}$". Since $\Vdash_\mathbb{R} ``\dot{F}$ is not in the ground model", we can find $\alpha_s > \alpha$ and $p_0, p_1 \leq p'_s$ such that $p_0$ and $p_1$ decide the statement $``\alpha_s \in \dot{F}"$ in opposite ways. Let $p_{s^\frown \langle 0 \rangle} = p_0$ and $p_{s^\frown \langle 1 \rangle} = p_1$. Do this for all $s \in {^\beta 2}$, and let $\alpha_{\beta+1} = \sup(\{ \alpha_s + 1 \mid s\in {^\beta 2} \})$. $2^\beta < \kappa$, so $\alpha_{\beta+ 1} <\kappa^+$.

If $\beta \leq \gamma$ is a limit ordinal and $\langle p_s \mid s\in {^{<\beta} 2} \rangle$ and $\langle \alpha_\delta \mid \delta <\beta \rangle$ have been constructed, let $\alpha_\beta = \sup(\{\alpha_\delta \mid \delta <\beta \})$. Fix $s\in {^\beta 2}$. Since $\mathbb{R}$ is $\kappa$-closed, there is $p\in \mathbb{R}$ such that, for every $\delta < \beta$, $p\leq p_{s\restriction \delta}$. We have arranged that for every $\delta <\beta$ there is $\alpha > \alpha_\delta$ such that $p_{s\restriction (\delta+1)} \Vdash ``\check{\alpha} \in \dot{F}"$. Thus, $p \Vdash ``\check{\alpha}_\beta$ is a limit point of $\dot{F}"$. Find $p' \leq p$ and $D_s \in \mathcal{D}_{\alpha_\beta}$ such that $p' \Vdash ``\dot{F} \cap \check{\alpha}_\beta = \check{D}_s"$. Let $p_s = p'$. Requirements 1-6 above are easily seen to be satisfied by this construction.

Now suppose that $\kappa$ is strongly inaccessible. We modify the above construction slightly. By Fodor's Lemma, find $\nu < \kappa$ and a stationary $S\subseteq S^\lambda_{<\kappa}$ such that if $\alpha \in S$, then $|\mathcal{D}_\alpha| \leq \nu$. Construct $\langle p_s \mid s\in {^{\leq \nu}2} \rangle$ and $\langle \alpha_\beta \mid \beta \leq \nu \rangle$ exactly as above. Fix a sufficiently large regular cardinal $\theta$ and let $M \prec H(\theta)$ contain all relevant information (including $\dot{F}$, $\overrightarrow{\mathcal{D}}$, $\mathbb{R}$, $\langle p_s \mid s\in {^{\leq \nu}2} \rangle$, and $\langle \alpha_\beta \mid \beta \leq \nu \rangle$) such that $|M| = \kappa \subseteq M$ and $\lambda_M=M\cap \lambda \in S$. Fix $\langle \lambda_\eta \mid \eta < \gamma < \kappa \rangle$ increasing and cofinal in $\lambda_M$. Using the $\kappa$-closure of $\mathbb{R}$ and the fact that $\dot{F}$ is forced to be a club, find, for each $s \in {^\nu 2}$, a decreasing sequence of conditions from $\mathbb{R}$, $\langle p_{s, \eta} \mid \eta < \gamma \rangle$ such that, for every $\eta < \gamma$, $p_{s,\eta} \in M$ and there is a $\xi_\eta$ such that $\lambda_\eta < \xi_\eta < \lambda_M$ and $p_{s, \eta} \Vdash ``\check{\xi}_\eta \in \dot{F}"$. Let $p^*_s$ be a lower bound for $\langle p_{s, \eta} \mid \eta < \gamma \rangle$. For each $s \in {^\nu 2}$, $p^*_s \Vdash ``\check{\lambda}_M \in \dot{F}'"$ and, for $s \not= u \in {^\nu 2}$, $p^*_s$ and $p^*_u$ force contradictory information about $\dot{F} \cap \lambda_M$. Since $2^\nu > \nu$, this contradicts the fact that $|\mathcal{D}_{\lambda_M}| \leq \nu$.
\end{proof}

Thus, $F \in V[G*H*I]$. However, $\lambda = (\kappa^+)^{V[G]}$ and, in $V[G]$, $\mathbb{Q}*\dot{\mathbb{T}}$ has a dense $\kappa^+$-closed subset. Thus, $\lambda = (\kappa^+)^{V[G*H*I]}$, contradicting the fact that $F$ is a club in $\lambda$ of order type $\kappa$. 
\end{proof}

Next, we prove that, if $\mu<\nu \leq \kappa$, $\square^\mu(\kappa^+)$ does not imply that there is a stationary $T\subseteq S^{\kappa^+}_\nu$ such that $\square(\kappa^+, T)$ holds. In particular, $\square^\mu(\kappa^+)$ does not imply $\square^\nu(\kappa^+)$. The main idea in the argument, which comes from a modification of the proof of Theorem 18 in \cite{cfm}, is that, though the forcing to thread a $\square(\kappa^+)$-sequence does not necessarily preserve stationary subsets of $S^{\kappa^+}_\nu$, the stationarity of the sets not preserved by the threading forcing is, in a sense, easy to destroy. Thus, by shooting clubs disjoint to these sets, we can arrange so that the threading forcing does in fact preserve stationary subsets of $S^{\kappa^+}_\nu$.

\begin{theorem}
\label{th315}
Let $\mu < \nu \leq \kappa$ be regular cardinals, and let $\lambda > \kappa$ be measurable with $2^\lambda = \lambda^+$. Then there is a forcing extension preserving all cardinals $\leq \kappa$ in which $\square^\mu(\kappa^+)$ holds but in which, for every stationary $T\subseteq S^{\kappa^+}_\nu$, $\square(\kappa^+, T)$ fails.
\end{theorem}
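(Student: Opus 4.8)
The plan is to build the model as a forcing extension of $V$ over the Levy collapse, lifting a measurable embedding exactly as in the proof of Theorem \ref{th312}, but inserting a preparatory club-shooting iteration so that the threading forcing behaves well. First I would force with $\mathrm{Coll}(\kappa,<\lambda)$ to reach $V[G]$, where $\lambda=\kappa^+$ and $\kappa^{<\kappa}=\kappa$, and add a $\square(\kappa^+)$-sequence $\overrightarrow{C}$ with $\mathbb{Q}(\kappa^+)$; by the lemma showing that the sequence added by $\mathbb{Q}(\kappa^+)$ is a $\square^\mu(\kappa^+)$-sequence, $\overrightarrow{C}$ witnesses $\square^\mu(\kappa^+)$. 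The obstacle to repeating the argument of Theorem \ref{th312} verbatim is that $\mathbb{T}(\overrightarrow{C})$, which will again be used to lift $j$, need not preserve stationary subsets of $S^{\kappa^+}_\nu$: the club of limit points of the generic thread can be steered away from any $T\subseteq S^{\kappa^+}_\nu$ that $\overrightarrow{C}$ is able to avoid. Following the modification of Theorem 18 of \cite{cfm}, I would therefore interpose, over $V[G][\overrightarrow{C}]$, an iteration $\mathbb{C}$ that, using $2^\lambda=\lambda^+$ for bookkeeping, shoots a club through $\kappa^+\setminus T$ for each stationary $T\subseteq S^{\kappa^+}_\nu$ whose stationarity $\mathbb{T}(\overrightarrow{C})$ fails to preserve. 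Let $N=V[G][\overrightarrow{C}][\mathbb{C}]$.

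I would then verify that $N$ has the right structure below and at $\kappa^+$. A single factor of $\mathbb{C}$ shoots a club through $\kappa^+\setminus T$ with $T\subseteq S^{\kappa^+}_\nu$, and such a poset is $\nu$-closed, since a descending sequence of length $<\nu$ has supremum of cofinality $<\nu$ and hence outside $T$. As $\mu<\nu$, the iteration is $\mu^+$-closed, so Lemma \ref{aplem} guarantees that it preserves the stationarity of the set $\{\alpha<\kappa^+\mid\mathrm{otp}(C_\alpha)=\mu\}\subseteq S^{\kappa^+}_\mu$ witnessing $\square^\mu(\kappa^+)$; moreover $\mathbb{C}$ adds no thread through $\overrightarrow{C}$, for a thread would force $\mathrm{otp}(C_\alpha)$ to exceed $\mu$ on a tail of this still-stationary set. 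Hence $\overrightarrow{C}$ remains a $\square^\mu(\kappa^+)$-sequence in $N$. The preservation of $\kappa^+$ (and of all cardinals) hinges on showing that the sets through whose complements we shoot are nonreflecting enough for the club-shooting to be $\kappa^+$-preserving, and on a chain-condition computation for $\mathbb{C}$ furnished by $2^\lambda=\lambda^+$.

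The heart of the argument, and what I expect to be the main difficulty, is the preservation lemma: in $N$, the forcing $\mathbb{T}(\overrightarrow{C})$ preserves every stationary $T\subseteq S^{\kappa^+}_\nu$. The mechanism is the one flagged before the statement: the only $T$ that $\mathbb{T}(\overrightarrow{C})$ can destroy are those along which a coherent thread may dodge $T$; the stationarity of such an avoidable $T$ is in turn destroyed by a single $\nu$-closed club-shooting, so after $\mathbb{C}$ every avoidable $T$ is nonstationary. Making this precise has two delicate parts: (i) isolating the class of avoidable $T$ ---expected to be those concentrating on $\{\delta\mid\mathrm{otp}(C_\delta)>\nu\}$ or failing to reflect--- and confirming that shooting clubs through their complements preserves $\kappa^+$; and (ii) proving that once every avoidable $T$ is nonstationary, the generic thread is forced to have a limit point in each remaining stationary $T\subseteq S^{\kappa^+}_\nu$, via an elementary-submodel construction in which the value $\mathrm{otp}(C_\delta)=\nu$ lets a thread-condition reach a point $\delta\in T$ while forcing an arbitrary club name to be unbounded in $\delta$. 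The case $\nu=\kappa$ is the most delicate, since the collapse quotient used in the lift is only $\kappa$-closed and cannot be relied on to preserve subsets of $S^{\kappa^+}_\kappa$; there I would avoid asking the quotient to preserve stationarity and instead push the relevant thread back across it by a tree argument, as in Theorem \ref{th312}.

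Finally I would fix $j:V\to M$ with critical point $\lambda$ and lift it through $\mathrm{Coll}(\kappa,<\lambda)*\mathbb{Q}(\kappa^+)*\mathbb{C}$ using a $\mathbb{T}(\overrightarrow{C})$-generic $I$, the $\kappa$-closed collapse quotient, and a master condition for $j(\mathbb{Q}(\kappa^+))$ assembled from $\bigcup I$, exactly as in Theorem \ref{th312}, to obtain $j:N\to N^*$. Suppose toward a contradiction that $\overrightarrow{\mathcal{D}}$ is a $\square(\kappa^+,T)$-sequence in $N$ with $T\subseteq S^{\kappa^+}_\nu$ stationary. Then the $\lambda$th term $F$ of $j(\overrightarrow{\mathcal{D}})$ is a thread through $\overrightarrow{\mathcal{D}}$, and since $\overrightarrow{\mathcal{D}}$ avoids $T$, the club $F'$ of its limit points is disjoint from $T$. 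A tree argument as in Theorem \ref{th312} ---simpler here, because a coherent sequence has at most one thread extending any given initial segment--- shows $F\in N[I]$. But the preservation lemma keeps $T$ stationary in $N[I]$, contradicting the fact that $F'$ is a club in $\lambda$ disjoint from $T$. Hence no such $\overrightarrow{\mathcal{D}}$ exists, so $\square(\kappa^+,T)$ fails for every stationary $T\subseteq S^{\kappa^+}_\nu$ while $\square^\mu(\kappa^+)$ holds in $N$, as desired.
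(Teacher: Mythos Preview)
Your overall architecture matches the paper's: Levy collapse, then $\mathbb{Q}(\kappa^+)$, then a $\leq\kappa$-support iteration $\mathbb{S}_{\lambda^+}$ that kills precisely those $T\subseteq S^{\kappa^+}_\nu$ whose stationarity $\mathbb{T}(\overrightarrow{C})$ destroys, with bookkeeping via $2^\lambda=\lambda^+$; then lift $j$ using the thread and derive the contradiction. You also correctly anticipate that pushing the thread back across the $\kappa$-closed quotient needs the tree argument from Theorem~\ref{th312}.

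Where you diverge from the paper, and where there is a real gap, is in how you propose to show that the club-shooting iteration preserves $\kappa^+$ and can be absorbed into $j(\mathbb{P})$. You suggest isolating ``avoidable'' $T$ intrinsically (e.g., by order type or non-reflection) and arguing directly that shooting clubs through their complements is $\kappa^+$-preserving. The paper does \emph{not} attempt any such characterization, and it is unclear one exists in usable form. Instead, the paper exploits the defining property of the iterands directly: since each $\dot{X}_\beta$ is forced by $\mathbb{T}$ to be nonstationary, one fixes a $\mathbb{Q}*\mathbb{S}_\beta*\mathbb{T}$-name $\dot{E}_\beta$ for a club disjoint from it, and then proves that the set $\mathbb{U}_\alpha$ of conditions $(q,\dot{s},\dot{t})\in\mathbb{Q}*\mathbb{S}_\alpha*\mathbb{T}$ with $\max(\dot{s}(\beta))\in\dot{E}_\beta$ for every $\beta\in\mathrm{dom}(\dot{s})$ is a dense $\lambda$-closed subset. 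This single lemma does all the heavy lifting: it yields $\lambda$-distributivity of $\mathbb{S}_\alpha$ (hence preservation of $\kappa^+$), and it is exactly what lets you invoke Fact~\ref{lift} to absorb $\mathbb{Q}*\mathbb{S}_{\lambda^+}*\mathbb{T}$ into $j(\mathbb{P})$ with a $\kappa$-closed quotient. Your proposal does not identify this mechanism, and your elementary-submodel sketch for the ``preservation lemma'' does not supply it.

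A second point you pass over: lifting $j$ to have domain $N$ requires a master condition not just for $j(\mathbb{Q})$ but also for $j(\mathbb{S}_{\xi^*})$ (the paper lifts only through a bounded stage $\xi^*$ capturing $T$ and $\overrightarrow{\mathcal{D}}$). Building that master condition --- setting $s^*(j(\alpha))=s(\alpha)\cup\{\lambda\}$ and checking $\lambda\notin j(\dot{X}_\alpha)$ --- again uses the clubs $\dot{E}_\alpha$ in an essential way. So the missing ingredient throughout is the $\dot{E}_\beta$-guided dense $\lambda$-closed subset; once you have it, the remainder of your outline goes through essentially as in the paper.
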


\begin{proof}
Let the initial model be called $V_0$. In $V_0$, let $\mathbb{P} = \mathrm{Coll}(\kappa, <\lambda)$. Let $V=V^\mathbb{P}_0$. Work in $V$. Let $\mathbb{Q} = \mathbb{Q}(\kappa^+)$, and let $\dot{\overrightarrow{C}}$ be a name for the $\square(\kappa^+)$-sequence added by $\mathbb{Q}$. In $V^{\mathbb{Q}}$, let $\mathbb{T} = \mathbb{T}(\dot{\overrightarrow{C}})$.

In $V^{\mathbb{Q}}$, we define a sequence of posets $\langle \mathbb{S}_\alpha \mid \alpha \leq \lambda^+ \rangle$ by induction on $\alpha$. We will show that each $\mathbb{S}_\alpha$ is $\lambda$-distributive and thus does not change any cofinalities $\leq \lambda$. For each $\beta < \lambda^+$, we will fix a $\mathbb{Q}*\mathbb{S}_\beta$-name $\dot{X}_\beta$ for a subset of $S^\lambda_\nu$ such that $\Vdash_{\mathbb{Q}*\mathbb{S}_\beta *\mathbb{T}} ``\dot{X}_\beta$ is non-stationary" and a $\mathbb{Q}*\mathbb{S}_\beta *\mathbb{T}$-name $\dot{E}_\beta$ for a club in $\lambda$ such that $\Vdash_{\mathbb{Q}*\mathbb{S}_\beta *\mathbb{T}} ``\dot{X}_\beta \cap \dot{E}_\beta = \emptyset"$. Elements of $\mathbb{S}_\alpha$ are then functions $s$ such that:

\begin{enumerate}
 \item{$\mathrm{dom}(s) \subseteq \alpha$.}
 \item{$|s| \leq \kappa$.}
 \item{For every $\beta \in \mathrm{dom}(s)$, $s(\beta)$ is a closed, bounded subset of $\lambda$.}
 \item{For every $\beta \in \mathrm{dom}(s)$, $s\restriction \beta \Vdash ``s(\beta) \cap \dot{X}_\beta = \emptyset"$.}
\end{enumerate}
For $s,t \in \mathbb{S}_\alpha$, $t\leq s$ if and only if $\mathrm{dom}(s) \subseteq \mathrm{dom}(t)$ and, for every $\beta \in \mathrm{dom}(s)$, $t(\beta)$ end-extends $s(\beta)$. $\mathbb{S}_{\lambda^+}$ can be seen as a dense subset of an iteration with $\leq \kappa$-support in which the $\alpha^{\mathrm{th}}$ iterand shoots a club disjoint to the interpretation of $\dot{X}_\alpha$. Thus, for each $\alpha < \lambda^+$, $\Vdash_{\mathbb{Q}*\mathbb{S}_{\lambda^+}}``\dot{X}_\alpha$ is non-stationary". By a standard $\Delta$-system argument, it is easy to see that, in $V^\mathbb{Q}$, $\mathbb{S}_{\lambda^+}$ has the $\lambda^+$-chain condition. Thus, since, in $V^\mathbb{Q}$, $2^\lambda = \lambda^+$, we can choose the sequence $\langle \dot{X}_\alpha \mid \alpha < \lambda^+ \rangle$ in such a way that, for every $\beta < \lambda^+$ and every $\mathbb{Q}*\mathbb{S}_\beta$-name $\dot{X}$ for a subset of $S^\lambda_\nu$, if there is $\alpha \geq \beta$ such that $\Vdash_{\mathbb{Q}*\mathbb{S}_\alpha *\mathbb{T}} ``\dot{X}$ is non-stationary", then there is $\alpha^* \geq \alpha$ such that $\Vdash_{\mathbb{Q}*\mathbb{S}_{\alpha^*}}``\dot{X}_{\alpha^*} = \dot{X}"$. Also, again by the $\lambda^+$-chain condition of $\mathbb{S}_{\lambda^+}$, if $\dot{X}$ is a $\mathbb{Q}*\mathbb{S}_{\lambda^+}$-name for a subset of $S^\lambda_\nu$ and $\Vdash_{\mathbb{Q}*\mathbb{S}_{\lambda^+}*\mathbb{T}}``\dot{X}$ is non-stationary", then there is $\alpha < \lambda^+$ and a $\mathbb{Q}*\mathbb{S}_\alpha$-name $\dot{Y}$ for a subset of $S^\lambda_\nu$ such that $\Vdash_{\mathbb{Q}*\mathbb{S}_{\lambda^+}}``\dot{X} = \dot{Y}"$ and $\Vdash_{\mathbb{Q}*\mathbb{S}_\alpha*\mathbb{T}}``\dot{Y}$ is non-stationary". Putting this together, we have that for every $\mathbb{Q}*\mathbb{S}_{\lambda^+}$-name $\dot{X}$ for a subset of $S^{\lambda}_\nu$, if $\Vdash_{\mathbb{Q}*\mathbb{S}_{\lambda^+}*\mathbb{T}}``\dot{X}$ is non-stationary", then already $\Vdash_{\mathbb{Q}*\mathbb{S}_{\lambda^+}}``\dot{X}$ is non-stationary".

\begin{lemma}
In $V^\mathbb{Q}$, for every $\alpha \leq \lambda^+$, $\mathbb{S}_{\alpha}$ is $\nu$-closed .
\end{lemma}

\begin{proof}
 Work in $V^\mathbb{Q}$. The proof is by induction on $\alpha \leq \lambda^+$. Thus, assume that, for all $\beta < \alpha$, $\mathbb{S}_\beta$ is $\nu$-closed. Let $\langle s_\gamma \mid \gamma < \xi \rangle$ be a decreasing sequence from $\mathbb{S}_{\alpha}$, with $\xi < \nu$. We will define a lower bound $s \in \mathbb{S}_\alpha$ for the sequence. Let $\mathrm{dom}(s) = \bigcup_{\gamma < \xi} \mathrm{dom}(s_\gamma)$. Clearly, $|\mathrm{dom}(s)| \leq \kappa$. For $\beta \in \mathrm{dom}(s)$, let $\delta_\beta = \sup(\bigcup_{\gamma < \xi} s_\gamma(\beta))$, and let $s(\beta) = \{\delta_\beta \} \cup \bigcup_{\gamma < \xi} s_\gamma(\beta)$. It is immediate that, if $s \in \mathbb{S}_{\alpha}$, then it is a lower bound for $\langle s_\gamma \mid \gamma < \xi \rangle$. Thus, it remains to show that for all $\beta \in \mathrm{dom}(s)$, $s\restriction \beta \Vdash ``s(\beta) \cap \dot{X}_\beta = \emptyset"$. But, if $\beta \in \mathrm{dom}(s)$, then, by the induction hypothesis, $\mathbb{S}_\beta$ is $\nu$-closed. Therefore $\nu$ remains a regular cardinal in $V^{\mathbb{Q}*\mathbb{S}_\beta}$, so $s\restriction \beta \Vdash_{\mathbb{S}_\beta} ``\check{\delta}_\beta \not\in \dot{X}_\beta"$ and hence $s\restriction \beta \Vdash ``s(\beta) \cap \dot{X}_\beta = \emptyset"$. 
\end{proof}

By genericity, $\overrightarrow{C}$, the square sequence added by $\mathbb{Q}$, is a $\square^\mu(\lambda)$-sequence as witnessed by a stationary $S\subseteq S^\lambda_\mu$. Also, by Lemma \ref{aplem}, $S$ remains stationary in $V^{\mathbb{Q}*\mathbb{S}_{\lambda^+}}$, so $\overrightarrow{C}$ remains a $\square^\mu(\lambda)$-sequence in $V^{\mathbb{Q}*\mathbb{S}_{\lambda^+}}$.

\begin{lemma}
In $V$, for every $\alpha \leq \lambda^+$, $\mathbb{Q}*\mathbb{S}_\alpha * \mathbb{T}$ has a dense $\lambda$-closed subset.
\end{lemma}

\begin{proof}
 Work in $V$. For $\alpha \leq \lambda^+$, let $\mathbb{U}_\alpha$ consist of all conditions of $\mathbb{Q}*\mathbb{S}_\alpha * \mathbb{T}$ of the form $(q, \dot{s}, \dot{t})$ such that $q$ decides the values of $\dot{s}$ and $\dot{t}$, $q \Vdash ``\beta^q = \gamma^t"$, and, for every $\beta \in \mathrm{dom}(\dot{s})$, $(q, \dot{s}\restriction \beta, \dot{t}) \Vdash ``\max(\dot{s}(\beta)) \in \dot{E}_\beta"$. We show by induction on $\alpha$ that $\mathbb{U}_\alpha$ is the desired dense $\lambda$-closed subset.
 
 If $\alpha = 0$, this is simply Proposition \ref{thread}. Let $\alpha = \beta +1$. We first show that $\mathbb{U}_\alpha$ is dense. Fix $(q_0, \dot{s}_0, \dot{t}_0)$. Since $\mathbb{Q}$ is $(\kappa + 1)$-strategically closed, we may assume without loss of generality that $q_0$ decides the value of $\dot{s}_0$ to be $s_0$. Apply the induction hypothesis to get $(q_1, \dot{s}^*_1, \dot{t}_1) \leq (q_0, \check{s}_0 \restriction \beta, \dot{t}_0)$ in $\mathbb{U}_\beta$ such that there is $\gamma < \lambda$ such that $\gamma > \max(s_0(\beta))$ (we say $\max(s_0(\beta))=0$ if $\beta \not\in \mathrm{dom}(s_0)$) and $(q_1, \dot{s}^*_1, \dot{t}_1) \Vdash ``\check{\gamma} \in \dot{E}_\beta"$ (it must then be the case that $(q_1, \dot{s}^*_1) \Vdash ``\check{\gamma} \not\in \dot{X}_\beta"$). Let $\dot{s}_1$ be such that $q_1 \Vdash ``\dot{s}_1 = \dot{s}^*_1 \cup \{(\beta, s_0(\beta) \cup \{\gamma \}) \}"$. Then $(q_1, \dot{s}_1, \dot{t}_1)$ extends $(q_0, \dot{s}_0, \dot{t}_0)$ and is in $\mathbb{U}_\alpha$.
 
 We now show that $\mathbb{U}_\alpha$ is $\lambda$-closed. Let $\xi < \lambda$, and let $\langle (q_\eta, \dot{s}_\eta, \dot{t}_\eta) \mid \eta < \xi \rangle$ be a decreasing sequence from $\mathbb{U}_\alpha$. By the methods of the proof of Proposition \ref{thread}, we can find $(q, \dot{t})$ such that $q$ decides the value of $\dot{t}$, $q \Vdash ``\beta^q = \gamma^t"$, and, for every $\eta<\xi$, $(q,\dot{t}) \leq (q_\eta, \dot{t}_\eta)$. We now define an $s$ so that $(q, \check{s}, \dot{t})$ is a lower bound for our sequence. Let $\mathrm{dom}(s) = \{\delta \mid \mbox{for some } \eta < \xi, q_\eta \Vdash \delta \in \mathrm{dom}(\dot{s}_\eta) \}$. For $\delta \in \mathrm{dom}(s)$, let $r_\delta = \{\gamma \mid \mbox{for some } \eta < \xi, q_\alpha \Vdash \gamma \in \dot{s}_\eta(\delta) \}$ and let $s(\delta) = r_\delta \cup \sup(r_\delta)$. All that remains to be shown is that for every $\delta \in \mathrm{dom}(s)$, $(q, \check{s} \restriction \delta, \dot{t}) \Vdash ``\max(\check{s}(\delta)) \in \dot{E}_\delta"$. We show this by induction on $\delta$. Thus, assume it holds for all ordinals less than $\delta$ in $\mathrm{dom}(s)$. Then $(q, \check{s} \restriction \delta, \dot{t}) \in \mathbb{U}_\delta$. For every $\eta < \xi$ with $\delta \in \mathrm{dom}(s_\eta)$, $(q, \dot{s}_\eta \restriction \delta, \dot{t}) \Vdash ``\max(\dot{s}_\eta (\delta)) \in \dot{E}_\delta"$. Thus, $(q, \check{s} \restriction \delta, \dot{t}) \Vdash ``\dot{E}_\delta \mbox{ is unbounded in } \check{r}_\delta "$. Since $\dot{E}_\delta$ is forced to be a club, $(q, \check{s} \restriction \delta, \dot{t}) \Vdash ``\sup(r_\delta) = \max(\check{s}(\delta)) \in \dot{E}_\delta"$.
 
 Now suppose that $\alpha \leq \lambda^+$ is a limit ordinal. To show that $\mathbb{U}_\alpha$ is dense, let $(q_0, \dot{s}_0, \dot{t}_0) \in \mathbb{Q}*\mathbb{S}_\alpha*\mathbb{T}$. Assume without loss of generality that $q_0$ decides the value of $\dot{s}_0$ and $\dot{t}_0$. If $\mathrm{cf}(\alpha) = \lambda$, then there is $\beta < \alpha$ such that $q_0 \Vdash ``\mathrm{dom}(\dot{s}_0) \subseteq \beta"$. Then $(q_0, \dot{s}_0, \dot{t}_0) \in \mathbb{Q}*\mathbb{S}_\beta * \mathbb{T}$ and, by the inductive hypothesis, we can find $(q_1, \dot{s}_1, \dot{t}_1) \leq (q_0, \dot{s}_0, \dot{t}_0)$ such that $(q_1, \dot{s}_1, \dot{t}_1) \in \mathbb{U}_\beta \subseteq \mathbb{U}_\alpha$.
 
 If $\mathrm{cf}(\alpha) < \lambda$, fix an increasing, continuous sequence $\langle \alpha_i \mid i< \xi \rangle$ cofinal in $\alpha$, with $\xi \leq \kappa$. We define a sequence $\langle (q_i, \dot{s}_i, \dot{t}_i) \mid 1 \leq i<\xi \rangle$ such that, for all $1\leq i<j<\xi$,
 
 \begin{itemize}
 \item{$(q_i, \dot{s}_i, \dot{t}_i) \in \mathbb{U}_{\alpha_i}$.}
 \item{$(q_i, \dot{s}_i, \dot{t}_i) \leq (q_0, \dot{s}_0 \restriction \alpha_i, \dot{t}_0)$.}
 \item{$(q_j, \dot{s}_j, \dot{t}_j) \leq (q_i, \dot{s}_i, \dot{t}_i)$.}
 \end{itemize}
 
 If $i = i'+1$, define $(q_i, \dot{s}_i, \dot{t}_i)$ as follows. Let $\dot{s}^*_i$ be such that $q_{i'} \Vdash ``\dot{s}^*_i = \dot{s}_{i'} \cup \dot{s}_0 \restriction [\alpha_{i'}, \alpha_i)"$. By the inductive hypothesis, find $(q_i, \dot{s}_i, \dot{t}_i) \in \mathbb{U}_{\alpha_i}$ such that $(q_i, \dot{s}_i, \dot{t}_i) \leq (q_{i'}, \dot{s}^*_i, \dot{t}_{i'})$. If $i$ is a limit ordinal, then, by the inductive hypothesis, $\mathbb{U}_{\alpha_i}$ is $\lambda$-closed, so we can find $(q_i, \dot{s}_i, \dot{t}_i) \in \mathbb{U}_{\alpha_i}$ that is a lower bound for $\langle (q_j, \dot{s}_j, \dot{t}_j) \mid j<i \rangle$.
 
 We now define $(q, \dot{s}, \dot{t}) \in \mathbb{U}_\alpha$ which is a lower bound for $\langle (q_i, \dot{s}_i, \dot{t}_i) \mid i<\xi \rangle$. First, by previous arguments, find $(q, \dot{t}) \in \mathbb{U}_0$ which is a lower bound for $\langle (q_i, \dot{t}_i) \mid i<\xi \rangle$. Next, let $X = \{\beta \mid \mbox{for some } i<\xi, q_i \Vdash ``\check{\beta} \in \mathrm{dom}(\dot{s}_i) \}$. If $\beta \in X$, let $\dot{s}^*(\beta)$ be such that $q \Vdash ``\dot{s}^*(\beta) = \bigcup_{i<\xi} \dot{s}_i(\beta)$. Let $\dot{s}$ be such that $q \Vdash ``\mathrm{dom}(\dot{s}) = X \mbox{ and, if } \beta \in X, \mbox{ then } \dot{s}(\beta) = \dot{s}^*(\beta) \cup \{\sup(\dot{s}^*(\beta) \}"$. It is routine to check that $(q, \dot{s}, \dot{t}) \in \mathbb{U}_\alpha$ and $(q, \dot{s}, \dot{t}) \leq (q_0, \dot{s}_0, \dot{t}_0)$.
 
 The proof that $\mathbb{U}_\alpha$ is $\lambda$-closed is the same as in the successor case. 
 \end{proof}
 
 It follows that, for all $\alpha \leq \lambda^+$, $\mathbb{S}_\alpha$ is $\lambda$-distributive and thus preserves all cardinals and cofinalities. It remains to show that, in $V^{\mathbb{Q}*\mathbb{S}_{\lambda^+}}$, $\square(\lambda, T)$ fails for every stationary $T \subseteq S^{\lambda}_\nu$.
 
 Fix $j: V_0 \rightarrow M$ with $\mathrm{crit}(j) = \lambda$. Let $G$ be $\mathbb{P}$-generic over $V_0$, let $H$ be $\mathbb{Q}$-generic over $V_0[G] = V$, let $I$ be $\mathbb{S}_{\lambda^+}$-generic over $V[H]$, and let $J$ be $\mathbb{T}$-generic over $V[H*I]$. Since, in $V$, $\mathbb{Q}*\mathbb{S}_{\lambda^+}*\mathbb{T}$ has a $\lambda$-closed dense subset and has size $\lambda^+$, by Fact \ref{lift} we can extend the identity map $i: \mathbb{P} \rightarrow j(\mathbb{P})$ to a complete embedding $i^*: \mathbb{P}*\mathbb{Q}*\mathbb{S}_{\lambda^+}*\mathbb{T} \rightarrow j(\mathbb{P})$ such that the quotient forcing $\mathbb{R} = j(\mathbb{P})/i^*[\mathbb{P}*\mathbb{Q}*\mathbb{S}_{\lambda^+}*\mathbb{T}]$ is $\kappa$-closed. Thus, letting $K$ be $\mathbb{R}$-generic over $V[H*I*J]$, we can lift $j$ to an elementary embedding $j: V \rightarrow M[G*H*I*J*K]$.
 
 Suppose now for sake of contradiction that, in $V[H*I]$, $T\subseteq S^\lambda_\nu$ is stationary and $\overrightarrow{D} = \langle D_\alpha \mid \alpha < \lambda \rangle$ is a $\square(\lambda, T)$-sequence. For $\xi < \lambda^+$, let $I_\xi = I \cap \mathbb{S}_\xi$. Each $I_\xi$ is then $\mathbb{S}_\xi$-generic over $V[H]$. Since $\mathbb{S}_{\lambda^+}$ has the $\lambda^+$-c.c., we can fix $\xi^* < \lambda^+$ such that $T, \overrightarrow{D} \in V[H*I_{\xi^*}]$.
 
 We would like to lift $j$ further to have domain $V[H*I_{\xi^*}]$. To do this, we define a master condition $(q^*, \check{s}^*, \check{t}^*) \in j(\mathbb{Q}*\mathbb{S}_{\xi^*}*\mathbb{T})$. $q^*$ is defined exactly as in the proof of Theorem \ref{th312}. Let $E$ be the club added by $J$, and let $t^* = E \cup \{\lambda\}$. Then $(q^*, \check{t}^*) \in j(\mathbb{Q}*\mathbb{T})$. Let $s$ be the generic object added by $I_{\xi^*}$. $s$ is thus a function with domain $\xi^*$, where, for each $\alpha < \xi^*$, $s(\alpha)$ is a club in $\lambda$. Let $s^*$ be such that $\mathrm{dom}(s^*) = j[\xi^*]$ and, for each $\alpha < \xi^*$, $s^*(j(\alpha)) = s(\alpha) \cup \{\lambda\}$. It is clear that, if $(q^*, \check{s}^*) \in j(\mathbb{Q}*\mathbb{S}_{\xi^*})$, then it is a lower bound for $j[H*I_{\xi^*}]$. Thus, all that needs to be checked is that, for every $\alpha < \xi^*$, $(q^*, \check{s}^* \restriction j(\alpha)) \Vdash ``\check{s}^*(j(\alpha)) \cap j(\dot{X}_\alpha) = \emptyset"$. We show this by induction on $\alpha$. It suffices to show that $(q^*, \check{s}^* \restriction j(\alpha)) \Vdash ``\check{\lambda} \not\in j(\dot{X}_\alpha)"$. Suppose for sake of contradiction that there is $(q', \dot{s}') \leq (q^*, \check{s}^* \restriction j(\alpha))$ such that $(q',\dot{s}') \Vdash ``\check{\lambda} \in j(\dot{X}_\alpha)"$. Recall that $\dot{E}_\alpha$ is a $\mathbb{Q}*\mathbb{S}_\alpha*\mathbb{T}$-name for a club in $\lambda$. $(q', \dot{s}', \check{t}^*)$ is a lower bound for $H*I\restriction \alpha *J$, so, for every $\beta <\lambda$, $(q', \dot{s}', \check{t}^*) \Vdash ``\check{\beta} \in j(\dot{E}_\alpha)"$ if and only if $\beta \in E_\alpha$. Thus, $(q', \dot{s}', \check{t}^*) \Vdash ``\lambda \mbox{ is a limit point of } j(\dot{E}_\alpha)"$, so $(q', \dot{s}', \check{t}^*) \Vdash ``\lambda \not\in j(\dot{X}_\alpha)"$. This is a contradiction.
 
 Thus, $(q^*, \check{s}^*)$ is a lower bound for $j[H*I_{\xi^*}]$ in $j(\mathbb{Q}*\mathbb{S}_{\xi^*})$, so, if we let $H^+*I_{\xi^*}^+$ be $j(\mathbb{Q}*\mathbb{S}_{\xi^*})$-generic over $M[G*H*I*J*K]$ with $(q^*, \check{s}^*) \in H^+*I_{\xi^*}^+$, then we can lift $j$ to an elementary embedding $j:V[H*I_{\xi^*}] \rightarrow M[G*H*I*J*K*H^+*I_{\xi^*}^+]$. 
 
 Now $\overrightarrow{D}$ and $T$ are in the domain of $j$, $j(\overrightarrow{D}) = \langle D_\alpha \mid \alpha < j(\lambda) \rangle$ is a $\square(j(\lambda), j(T))$-sequence, and $j(T) \cap \lambda = T$. Thus, $D_\lambda$ is a thread through $\overrightarrow{D}$ and avoids $T$. Since $j(\mathbb{Q}*\mathbb{S}_{\xi^*})$ is $j(\lambda)$-distributive, $H^+*I_{\xi^*}^+$ could not have added $D_\lambda$, so $D_\lambda \in V[H*I*J*K]$. Also, since $K$ is generic for $\kappa$-closed forcing, the argument from Theorem \ref{th312} shows that it can not add a thread through $\overrightarrow{D}$, so $D_\lambda \in V[H*I*J]$ and witnesses that $T$ is not stationary in $V[H*I*J]$. But we arranged our iteration $S_{\lambda^+}$ so that this implies that $T$ is already non-stationary in $V[H*I]$. This is a contradiction, so $\square(\lambda, T)$ fails in $V[H*I]$. 
\end{proof}

Finally, we show that the existence of a stationary $S\subset S^{\kappa^+}_\kappa$ such that $\square(\kappa^+, S)$ holds does not imply the existence of a stationary $T\subset S^{\kappa^+}_{<\kappa}$ for which $\square(\kappa^+, T)$ holds. In \cite{hs}, Harrington and Shelah show that, after collapsing a Mahlo cardinal to be $\kappa^+$, one can iteratively force to shoot clubs disjoint to non-reflecting subsets of $S^{\kappa^+}_{<\kappa}$, thus obtaining a model in which every stationary subset of $S^{\kappa^+}_{<\kappa}$ reflects. We will work in L and carry out the forcing iteration used by Harrington and Shelah, arguing that our desired conclusion holds in the final model. We use the following theorem of Jensen about squares in L (\cite{devlin}, Chapter VII). 

\begin{theorem}
\label{jens}
Suppose $V=L$. Let $\lambda$ be an inaccessible cardinal which is not weakly compact, and let $S\subseteq \lambda$ be stationary. Then there is a stationary $S'\subseteq S$ such that $\square(\lambda, S')$ holds. 
\end{theorem}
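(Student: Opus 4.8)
The plan is to work throughout in $L$ and to reduce the problem to a single construction, exploiting the observation that, once a coherent sequence avoids a \emph{stationary} set, it automatically has no thread. Indeed, suppose $\langle C_\alpha \mid \alpha \in \mathrm{lim}(\lambda) \rangle$ is coherent and avoids a stationary $S' \subseteq S$, and suppose toward a contradiction that $T$ threads it. For any $\alpha \in T'$, choosing $\beta \in T'$ with $\beta > \alpha$ we have $C_\beta = T \cap \beta$, and $\alpha$ is a limit point of $T \cap \beta$, so $\alpha \in C'_\beta$; since the sequence avoids $S'$, $\alpha \notin S'$. Thus $T'$ is a club disjoint from $S'$, contradicting the stationarity of $S'$. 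Hence it suffices to produce, in $L$, a coherent sequence together with a stationary $S' \subseteq S$ that it avoids; the ``no thread'' clause in the definition of $\square(\lambda, S')$ then comes for free, and the genuine content is entirely in the coherent avoidance of a stationary subset of $S$.

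To build the sequence I would use the fine structure of $L$ and the failure of weak compactness. Since $\lambda$ is inaccessible (so $V_\lambda = L_\lambda$) and not weakly compact, it is not $\Pi^1_1$-indescribable: there are $A \subseteq L_\lambda$ and a $\Pi^1_1$ formula $\phi$ with $(L_\lambda, \in, A) \models \phi$ but $\{\alpha < \lambda \mid (L_\alpha, \in, A \cap L_\alpha) \models \phi\}$ nonstationary. I would then define $C_\alpha$ by recursion on $\alpha \in \mathrm{lim}(\lambda)$ from canonical fine-structural data: for each $\alpha$ take the transitive collapse of the $<_L$-least elementary hull of a tall enough $L_\theta$ containing $A$, $\alpha$, and $\langle C_\beta \mid \beta < \alpha \rangle$, and read off a canonical club in $\alpha$ from this collapse. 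The point is that the condensation lemma forces \emph{coherence}: if $\beta \in C'_\alpha$, the collapse defining $C_\alpha$ restricts to exactly the collapse defining $C_\beta$, whence $C_\beta = C_\alpha \cap \beta$. Simultaneously I would let $S'$ be the set of $\alpha \in S$ that are ``rigid'' in the relevant fine-structural sense (the non-reflecting points of $\phi$ detected by the standard parameters and $\Sigma_n$-projecta); non-weak-compactness is exactly what makes $S'$ stationary, and the rigidity is arranged to forbid any $\alpha \in S'$ from appearing as a limit point of a canonical club, giving $C'_\alpha \cap S' = \emptyset$ for all $\alpha$.

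The main obstacle is this second step: producing a definition of $C_\alpha$ that is at once \emph{stable under condensation} (so as to force coherence) and \emph{canonical enough that the non-reflecting points $S'$ are provably excluded from every $C'_\alpha$}. Reconciling these two demands is precisely where Jensen's fine-structural analysis of $L$ — the behaviour of the $\Sigma_n$-projecta, the standard parameters, and the condensation of the associated $\Sigma_n$-hulls — is indispensable, and it is the only place where the hypothesis $V = L$ is used in an essential way (rather than merely $\square(\lambda)$ together with a non-reflecting stationary set, which, as the earlier proposition on avoiding \emph{pairs} of stationary sets suggests, is not enough to avoid a single stationary set). Once the construction is carried out, the reduction in the first paragraph finishes the proof: the resulting coherent sequence avoids the stationary set $S' \subseteq S$ and so is a $\square(\lambda, S')$-sequence.
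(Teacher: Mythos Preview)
The paper does not prove this theorem; it is quoted without proof as a result of Jensen, with a reference to Chapter~VII of Devlin's \emph{Constructibility}. There is therefore no argument in the paper to compare your proposal against.

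That said, your first paragraph is a correct and worthwhile reduction: if a coherent sequence on $\lambda$ avoids a stationary set $S'$, then any putative thread $T$ would have $T'$ club and disjoint from $S'$, so no thread exists and the sequence is automatically a $\square(\lambda, S')$-sequence. This reduces the task to producing, in $L$, a coherent sequence together with a stationary $S'\subseteq S$ that it avoids.

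Your remaining two paragraphs gesture at the standard fine-structural approach (failure of $\Pi^1_1$-indescribability at a non--weakly-compact inaccessible, canonical clubs defined from hulls and condensation), but at this level of detail they are a narrative rather than a proof. The phrase ``read off a canonical club in $\alpha$ from this collapse'' conceals essentially all of the content: in Jensen's actual construction the clubs come from carefully chosen singularizing data tied to the $\Sigma_n$-projecta and standard parameters, and the verification that coherence and avoidance hold simultaneously is where the real work lies. If your intent is only to indicate the shape of the argument and defer to the literature, that is precisely what the paper does; if you intend this as a self-contained proof, the middle paragraph would need to be replaced by the genuine fine-structural construction.
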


\begin{theorem}
 Suppose $V=L$, $\kappa$ is a regular, uncountable cardinal, and $\lambda > \kappa$ is the least Mahlo cardinal greater than $\kappa$. Then there is a forcing extension in which $\lambda = \kappa^+$, there is a stationary $S\subseteq S^\lambda_\kappa$ such that $\square(\lambda, S)$ holds, and all stationary subsets of $S^\lambda_{<\kappa}$ reflect (and hence $\square(\lambda, T)$ fails for every stationary $T\subseteq S^\lambda_{<\kappa}$).
 \end{theorem}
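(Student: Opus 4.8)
The plan is to combine Jensen's square sequences in $L$ with the reflection-forcing iteration of Harrington and Shelah, and then to use the $\lambda$-chain condition of that iteration as the single tool that preserves both the square sequence and its stationary set. Throughout I work in $L$, where GCH is available, so no cardinal-arithmetic hypotheses need to be assumed.

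First I would produce the square sequence in the ground model. Since $\lambda$ is the least Mahlo cardinal above $\kappa$, it is inaccessible; moreover it is not weakly compact, since below a weakly compact cardinal the Mahlo cardinals form a stationary (hence unbounded) set, and any Mahlo cardinal in the interval $(\kappa, \lambda)$ would contradict the minimality of $\lambda$. Applying Theorem \ref{jens} with the stationary set $S^\lambda_\kappa$ (stationary because $\lambda$ is inaccessible and $\kappa < \lambda$) then yields a stationary $S \subseteq S^\lambda_\kappa$ together with a $\square(\lambda, S)$-sequence $\overrightarrow{C}$ in $L$.

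Next I would set up, over $L$, the Harrington--Shelah iteration $\mathbb{P}$ of length $\lambda$: a suitably supported ($<\kappa$-support) iteration that interleaves L\'evy collapses $\mathrm{Coll}(\kappa, \gamma)$ for $\gamma < \lambda$ with forcings shooting clubs through the complements of non-reflecting stationary subsets of $S^\lambda_{<\kappa}$, with a bookkeeping function anticipating every potential non-reflecting stationary set. The properties I would extract from \cite{hs}, now run for a general regular uncountable $\kappa$ in place of $\omega_1$, are that $\mathbb{P}$ is $<\kappa$-strategically closed (so $\kappa$ is preserved and no new $<\kappa$-sequences are added), that $\mathbb{P}$ collapses every cardinal in $(\kappa, \lambda)$ so that $\lambda = \kappa^+$ in $V[\mathbb{P}]$, that $\mathbb{P}$ has the $\lambda$-chain condition, and that in $V[\mathbb{P}]$ every stationary subset of $S^\lambda_{<\kappa}$ reflects. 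The delicate point, and where Mahloness is indispensable, is the $\lambda$-chain condition: because the club-shooting iterands can be long, one cannot read it off directly but must run the usual $\Delta$-system and reflection argument along the stationarily many inaccessible $\alpha < \lambda$ at which $\mathbb{P}$ factors as an initial segment of size $<\lambda$. I expect this chain-condition verification, together with the correct adaptation of the reflection argument of \cite{hs} to cofinalities below an arbitrary $\kappa$, to be the main obstacle.

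Finally I would argue that $\overrightarrow{C}$ still witnesses $\square(\lambda, S)$ in $V[\mathbb{P}]$ and that reflection kills the competing principle; this preservation is the conceptual heart of the theorem, but it is a clean consequence of the chain condition. Coherence of $\overrightarrow{C}$ and the fact that it avoids $S$ are absolute, since the relevant sets do not change and ``$C_\alpha$ is a club in $\alpha$'' is absolute. The $\lambda$-chain condition supplies the rest: a thread through $\overrightarrow{C}$ would be a club all of whose initial segments lie in $L$, and no $\lambda$-c.c.\ forcing adds such a thread to a $\square(\lambda)$-sequence (see \cite{cfm}); and since every club of $V[\mathbb{P}]$ contains a ground-model club when forcing with a $\lambda$-c.c.\ poset over a regular $\lambda$, the set $S$ stays stationary, remaining inside $S^{\kappa^+}_\kappa$ because $<\kappa$-distributivity preserves cofinality $\kappa$. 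Hence $\square(\lambda, S) = \square(\kappa^+, S)$ holds with $S \subseteq S^{\kappa^+}_\kappa$ stationary. For the last clause, if some stationary $T \subseteq S^\lambda_{<\kappa}$ admitted a $\square(\lambda, T)$-sequence $\overrightarrow{D}$, then by full reflection $T \cap \beta$ would be stationary for some $\beta$ of uncountable cofinality, and $D'_\beta$, being club in $\beta$, would meet $T \cap \beta$, contradicting that $\overrightarrow{D}$ avoids $T$. Thus $\square(\lambda, T)$ fails for every stationary $T \subseteq S^\lambda_{<\kappa}$, which completes the argument.
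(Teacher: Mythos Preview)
Your preservation argument rests on the claim that the Harrington--Shelah iteration has the $\lambda$-chain condition, and this is false for the iteration you describe. A single club-shooting poset---closed bounded subsets of $\lambda$ disjoint from a fixed non-reflecting stationary $X\subseteq S^\lambda_{<\kappa}$, ordered by end-extension---already has an antichain of size $\lambda$: the singletons $\{\zeta+1\}$ for $\zeta<\lambda$ are all conditions (successor ordinals are never in $X$) and are pairwise incompatible, since a common end-extension would need two different minima. This antichain lifts verbatim to the two-step $\mathbb{P}_\beta * \dot{\mathbb{Q}}_\beta$ and hence to the full iteration, regardless of support. So no $\Delta$-system or reflection argument along inaccessibles can rescue the $\lambda$-c.c., and with it your one-line preservation of the stationarity of $S$ (and of non-threadability) collapses.

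The paper's route is genuinely different and explains why your choice of $S$ is also off. One first forces with $\mathrm{Coll}(\kappa,<\lambda)$ and then runs a length-$\lambda^+$, $\leq\kappa$-support iteration of club shootings; this tail is shown to be $\lambda$-\emph{distributive} (not $\lambda$-c.c.) via elementary substructures $N\prec H(\theta)$ with $\lambda_N := N\cap\lambda$ inaccessible and $N^{<\lambda_N}\subseteq N$---this is where Mahloness is spent. The stationarity of $S$ is then proved directly: given a name $\dot E$ for a club, pick such an $N$ with $\lambda_N\in S$, build a descending $(\bar N$-)generic sequence through the $\lambda_N$-closed dense subset of the collapsed iteration, and assemble a lower bound forcing $\lambda_N\in\dot E$. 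For this one needs the points of $S$ to be exactly the $\lambda_N$'s, i.e.\ \emph{inaccessible} cardinals below $\lambda$; after the collapse they land in $S^\lambda_\kappa$ as required. Your $S\subseteq S^\lambda_\kappa$ chosen in $L$ consists of singular ordinals and cannot serve as $\lambda_N$ for any such $N$, so even switching to the distributivity argument you would be stuck. (Once $S$ is stationary and $\overrightarrow{C}$ avoids $S$, non-threadability is automatic, so that part needs no separate appeal to any chain condition.)
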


\begin{proof}
By Theorem \ref{jens}, fix a stationary $S\subseteq \lambda$ consisting of inaccessible cardinals and a $\square(\lambda, S)$-sequence, $\overrightarrow{C}$. Let $\mathbb{P} = \mathrm{Coll}(\kappa, <\lambda)$. In $V^\mathbb{P}$, we define an iteration $\langle \mathbb{Q}_\alpha \mid \alpha \leq \lambda^+ \rangle$, which will shoot clubs disjoint to non-reflecting sets of ordinals, by induction on $\alpha$. For each $\alpha < \lambda^+$, we will fix a $\mathbb{Q}_\alpha$-name $\dot{X}_\alpha$ such that $\Vdash_{\mathbb{Q}_\alpha} ``\dot{X}_\alpha \subseteq S^\lambda_{<\kappa}$ and $\dot{X}_\alpha$ does not reflect at any ordinal of uncountable cofinality". Elements of $\mathbb{Q}_\alpha$ are functions $q$ such that:
 \begin{enumerate}
  \item{$\mathrm{dom}(q)\subseteq \alpha$.}
  \item{$|q| \leq \kappa$.}
  \item{For every $\beta \in \mathrm{dom}(q)$, $q(\beta)$ is a closed, bounded subset of $\lambda$.}
  \item{For every $\beta \in \mathrm{dom}(q)$, $q\restriction \beta \Vdash ``q(\beta) \cap \dot{X}_\beta = \emptyset"$.}
 \end{enumerate}
 For $p,q \in \mathbb{Q}_\alpha$, $q\leq p$ if and only if $\mathrm{dom}(p)\subseteq \mathrm{dom}(q)$ and, for every $\beta \in \mathrm{dom}(p)$, $q(\beta)$ end-extends $p(\beta)$. An easy $\Delta$-system argument shows that $\mathbb{Q}_{\lambda^+}$ has the $\lambda^+$-c.c. Thus, with a suitable choice of the names $\dot{X}_\alpha$, we can arrange that, in $V^{\mathbb{P}*\mathbb{Q}_{\lambda^+}}$, every stationary subset of $S^\lambda_{<\kappa}$ reflects.
 
 Back in $V$, fix a sufficiently large, regular cardinal $\theta$ (in particular, $\theta > \lambda^+$). Let $\mathcal{N}$ be the set of $N$ such that $\mathbb{P}\in N$, $N \preceq (H(\theta), \in, \lhd)$ (where $\lhd$ is a well-ordering of $H(\theta)$),  $\lambda_N := N\cap \lambda$ is an inaccessible cardinal, $|N| = \lambda_N$, and $N^{<\lambda_N} \subseteq N$. For $N \in \mathcal{N}$, let $\pi_N: N \rightarrow \bar{N}$ be the transitive collapse. If $x\in N$, let $x_N = \pi_N(x)$.
 
 \begin{lemma}
 \label{lem320}
  For every $x\in H(\theta)$, there is $N\in \mathcal{N}$ such that $x\in N$.
 \end{lemma}
 
 \begin{proof}
  Fix $x\in H(\theta)$. We find the desired $N \in \mathcal{N}$ by building an increasing, continuous chain $\langle N_\alpha \mid \alpha<\lambda \rangle$ such that, for each $\alpha < \lambda$, 
  \begin{enumerate}
   \item{$x, \lambda \in N_\alpha$}
   \item{$N_\alpha \prec H(\theta)$.}
   \item{$|N_\alpha| < \lambda$.}
   \item{$\mathcal{P}(N_\alpha) \subseteq N_{\alpha+1}$.}
   \item{$\sup(N_\alpha \cap \lambda) \subseteq N_{\alpha+1}$.}
  \end{enumerate}
  Let $E$ be the set of $\alpha < \lambda$ such that $N_\alpha \cap \lambda = \alpha = |N_\alpha|$. $E$ is a club in $\lambda$, so, since $\lambda$ is Mahlo, there is $\alpha^* \in E$ such that $\alpha^*$ is inaccessible. $N_{\alpha^*}$ is then in $\mathcal{N}$. 
 \end{proof}
 
 If $N \in \mathcal{N}$, then, since $N$ is closed under $<\lambda_N$-sequences, $\mathbb{P}_N = \mathbb{P} \cap N = \mathrm{Coll}(\kappa, <\lambda_N)$, so $N^\mathbb{P} \prec H(\theta)^\mathbb{P}$. Also, since $\mathbb{P}_N$ has the $\lambda_N$-c.c., $\bar{N}^{\mathbb{P}_N} \cong N^\mathbb{P}$ is closed under $<\lambda_N$-sequences from $V^{\mathbb{P}_N}$.
 
The following Lemma, which will show that $\mathbb{Q}_{\lambda^+}$ is $\lambda$-distributive, is proven in \cite{hs}.
 
 \begin{lemma}
 \label{lem321}
 Let $\alpha < \lambda^+$.
  \begin{enumerate}
   \item{For all $N\in \mathcal{N}$ such that $\alpha \in N$, in $V^{\mathbb{P}_N}$, $(\mathbb{Q}_\alpha)_N$ has a $\lambda_N$-closed dense subset.}
      \item{In $V^\mathbb{P}$, $\mathbb{Q}_\alpha$ is $\lambda$-distributive.}
      \item{For all $N\in \mathcal{N}$ such that $\alpha \in N$, in $V^{\mathbb{P}_N*(\mathbb{Q}_\beta)_N}$, $(X_\beta)_N$ is not stationary in $\lambda_N$.}
  \end{enumerate}
 \end{lemma}
 
 By the chain condition, every $<\lambda$-sequence in $V^{\mathbb{P}*\mathbb{Q}_{\lambda^+}}$ appears in $V^{\mathbb{P}*\mathbb{Q}_\alpha}$ for some $\alpha < \lambda^+$, so we have that, in $V^\mathbb{P}$, $\mathbb{Q}_{\lambda^+}$ is $\lambda$-distributive and thus preserves all cardinals $\leq \lambda$.
 
 \begin{lemma}
  In $V^{\mathbb{P}*\mathbb{Q}_{\lambda^+}}$, there is no stationary $T\subseteq S^\lambda_{<\kappa}$ such that $\square(\lambda, T)$ holds.
 \end{lemma}
 
 \begin{proof}
  Suppose there is such a $T$, and let $\overrightarrow{D}$ be a $\square(\lambda, T)$-sequence. Then, for each $\alpha < \lambda$ of uncountable cofinality, $D'_\alpha$ witnesses that $T\cap \alpha$ is non-stationary, so $T$ does not reflect. However, in $V^{\mathbb{P}*\mathbb{Q}_{\lambda^+}}$, every stationary subset of $S^\lambda_{<\kappa}$ reflects. Contradiction. 
 \end{proof}
 
 In $V^{\mathbb{P}*\mathbb{Q}_{\lambda^+}}$, $\overrightarrow{C}$ is clearly still a coherent sequence avoiding $S$ and $S\subseteq S^\lambda_\kappa$. Thus, the following lemma suffices to prove the theorem.
 
 \begin{lemma}
  $S$ is stationary in $V^{\mathbb{P}*\mathbb{Q}_{\lambda^+}}$.
 \end{lemma}
 
 \begin{proof}
  Let $E \in V^{\mathbb{P}*\mathbb{Q}_{\lambda^+}}$ be a club in $\lambda$. By the chain condition, there is $\alpha < \lambda^+$ such that $E\in V^{\mathbb{P}*\mathbb{Q}_\alpha}$. Thus, it suffices to show that $S$ remains stationary in $V^{\mathbb{P}*\mathbb{Q}_{\alpha}}$ for every $\alpha<\lambda^+$.
  
  To this end, fix $\alpha<\lambda^+$, $(p, \dot{q}) \in \mathbb{P}*\mathbb{Q}_\alpha$, and $\dot{E}$, a $\mathbb{P}*\mathbb{Q}_\alpha$-name for a club in $\lambda$. By the argument from the proof of Lemma \ref{lem320}, we can find $N\in \mathcal{N}$ such that $\{(p, \dot{q}),\dot{E}, \alpha \} \subseteq N$ and $\lambda_N \in S$. Let $G$ be $\mathbb{P}$-generic over V with $p \in G$, and let $G_N$ be the restriction of $G$ to $\mathrm{Coll}(\kappa, <\lambda_N)$. Since $\mathbb{P}$ has the $\lambda$-c.c., $S$ is still stationary in $V[G]$. Let $q$ be the interpretation of $\dot{q}$ in $V[G]$, and reinterpret $\dot{E}$ in $V[G]$ as a $\mathbb{Q}_\alpha$-name. Also, we can extend $\pi_N$ to an isomorphism of $N[G]$ and $\bar{N}[G_N]$. Enumerate the dense open sets of $(\mathbb{Q}_\alpha)_N$ lying in $\bar{N}[G_N]$ as $\langle D_\xi \mid \xi < \lambda_N \rangle$. By Lemma \ref{lem321}(1) and the fact that $\bar{N}[G_N]$ is closed under $<\lambda_N$-sequences, we can find a decreasing sequence $\langle q_\xi \mid \xi < \lambda_N \rangle$ of conditions from the $\lambda_N$-closed dense subset of $(\mathbb{Q}_\alpha)_N$ such that $q_0 = q$ and, for all $\xi < \lambda_N$, $q_{\xi+1} \in D_\xi \cap \bar{N}[G_N]$. We define $q^*$ to be a lower bound for $\langle \pi_N^{-1}(q_\xi) \mid \xi < \lambda_N \rangle$ in $\mathbb{Q}_\alpha$ by letting $\mathrm{dom}(q^*) = N \cap \alpha$ and, for each $\beta \in \mathrm{dom}(q^*)$, \[q^*(\beta) = \bigcup_{\xi < \lambda_N} q_\xi(\pi_N(\beta)) \cup \{\lambda_N \}.\] cf($\lambda_N) = \kappa$ in $V^{\mathbb{P}*\mathbb{Q}_\beta}$, so $\lambda_N$ is forced not to be in $\dot{X}_\beta$ and thus $q^* \in \mathbb{Q}_\alpha$. For every $\gamma < \lambda_N$, there is $\xi < \lambda_N$ and $\delta \in (\gamma, \lambda_N)$ such that $q_\xi \Vdash_{(\mathbb{Q}_\alpha)_N} ``\check{\delta} \in \pi(\dot{E})"$. Thus, $\pi_N^{-1}(q_\xi) \Vdash_{\mathbb{Q}_\alpha} ``\check{\delta} \in \dot{E}"$. so $q^* \Vdash_{\mathbb{Q}_\alpha} ``\dot{E} \mbox{ is unbounded in } \check{\lambda}_N"$. Since $\dot{E}$ is a name for a club, $q^* \Vdash_{\mathbb{Q}_\alpha} ``\lambda_N \in \dot{E} \cap \check{S}"$, so $S$ is stationary in $V^{\mathbb{P}*\mathbb{Q}_\alpha}$. 
 \end{proof} 
 \renewcommand{\qedsymbol}{}
 \end{proof}
 We conclude with a diagram illustrating the situation at $\omega_2$, where we now have a complete picture. Arrows correspond to implications, and struck-out arrows to non-implications.
 
 \[
\begin{tikzpicture}[x=3cm, y=3cm]
\node (sq0) at (1,3) {$\square_{\omega_1}$};
\node (sq1) at (0,2) {$\square^\omega(\omega_2)$};
\node (sq2) at (2,2) {$\square^{\omega_1}(\omega_2)$};
\node (sq3) at (0,1) {$\exists$ stationary $S\subseteq S^{\aleph_2}_{\aleph_0}(\square (\omega_2, S))$};
\node (sq4) at (2,1) {$\exists$ stationary $T\subseteq S^{\aleph_2}_{\aleph_1}(\square (\omega_2, T))$};
\node (sq5) at (1,0) {$\square(\omega_2)$};
\path 
(sq0) edge [->, bend right=10] (sq1)
(sq1) edge [->, bend right=10] node[strike out, draw, -,sloped]{} (sq0)
(sq0) edge [->, bend right=10] (sq2)
(sq2) edge [->, bend right=10] node[strike out, draw,-]{} (sq0)
(sq1) edge [->, bend right=10] node[strike out, draw,-]{} (sq2)
(sq2) edge [->, bend right=10] (sq1)
(sq1) edge [->, bend right=10] (sq3)
(sq3) edge [->, bend right=10] (sq1)
(sq2) edge [->, bend right=10] (sq4)
(sq4) edge [->, bend right=10] node[strike out, draw,-]{} (sq2)
(sq3) edge [->, bend right=10] node[strike out, draw,-]{} (sq4)
(sq4) edge [->, bend right=10] node[strike out, draw,-]{} (sq3)
(sq3) edge [->, bend right=10] (sq5)
(sq5) edge [->, bend right=10] node[strike out, draw,-]{} (sq3)
(sq4) edge [->, bend right=10] (sq5)
(sq5) edge [->, bend right=10] node[strike out, draw,-,sloped]{} (sq4);
\end{tikzpicture}
\]
 
 \bibliography{squareref}
 \bibliographystyle{plain}

\end{document}